\documentclass[reqno]{article}
\usepackage[top=3cm, bottom=2.5cm, left=3cm, right=3cm]{geometry}
\usepackage{lmodern}

\usepackage{amssymb}
\usepackage{amsmath}
\usepackage{mathtools}
\usepackage{amsthm}
\usepackage{graphicx}
\usepackage{array}
\usepackage{titlesec}
\usepackage{eepic}
\usepackage{multicol}
\usepackage[usenames,dvipsnames]{xcolor}
\usepackage{color}
\usepackage[linktocpage=true]{hyperref}
\usepackage[hypcap=true]{caption}
\usepackage[cmtip,all]{xy}
\usepackage{enumitem}
\usepackage{leftidx}
\usepackage[dvipsnames]{xcolor}
\usepackage[mathscr]{euscript}
\usepackage{setspace}
\usepackage{needspace}

%%%%%% No indentation for new paragraphs 

\usepackage{parskip}

\makeatletter
\def\thm@space@setup{%
  \thm@preskip=\parskip \thm@postskip=0pt
}
\makeatother

\allowdisplaybreaks

%%% Sections %%%%%%%%%%%%%%%%%%%%%%%%%%%%%%%%%%%%%%%%%%%%%%%%%%%%%%%%%%%%%%%%%%
\usepackage{titlesec}
\titleformat{\section}[block]{\color{black}\large\bfseries\filcenter}{\thesection.}{0.5em}{}
\titleformat{\subsection}[hang]{\bfseries}{}{0.5em}{}

%%% Equations %%%%%%%%%%%%%%%%%%%%%%%%%%%%%%%%%%%%%%%%%%%%%%%%%%%%%%%%%%%%%%%%%
\numberwithin{equation}{section}

%%% Theorems %%%%%%%%%%%%%%%%%%%%%%%%%%%%%%%%%%%%%%%%%%%%%%%%%%%%%%%%%%%%%%%%%%

%\theoremstyle{plain}
\newtheorem{theorem}{Theorem}[section]
\newtheorem{lemma}[theorem]{Lemma}
\newtheorem{proposition}[theorem]{Proposition}
\newtheorem{corollary}[theorem]{Corollary}
\newtheorem{definition}[theorem]{Definition}
\newtheorem{problem}[theorem]{Problem}

\newtheorem{remark}[theorem]{Remark}

%%%%%%%%%%%% Theorems with Latin letter as markers %%%%%%%%%%%%%%%%%%%%%%%%%%%%

%\theoremstyle{theorem}

%%%%%%%%%%%% Theorems with A.#N as markers %%%%%%%%%%%%%%%%%%%%%%%%%%%%%%%%%%%%

%\theoremstyle{theorem}
\newtheorem{Atheorem}{Theorem}

\newtheorem{Acorollary}[Atheorem]{Corollary}

%%%%%%%%%%%% Theorems with B.#N as markers %%%%%%%%%%%%%%%%%%%%%%%%%%%%%%%%%%%%

%\theoremstyle{theorem}
\newtheorem{Btheorem}{Theorem}

\newtheorem{Bconjecture}[Btheorem]{Conjecture}

%%%%%%%%%%%% Theorems with C.#N as markers %%%%%%%%%%%%%%%%%%%%%%%%%%%%%%%%%%%%

%\theoremstyle{theorem}
\newtheorem{Ctheorem}{Theorem}

\newtheorem{Cproblem}[Ctheorem]{Problem}

%%%%%%%%%%%%%%%%%%%%%%%%%%%%%%%%%%%%%%%%%%%%%%%%

%\newenvironment{proof}{{\bf \noindent Proof.}}{\qed}
%\newcommand{\dem}{\noindent {\bf Proof. }}

%%%%% sections %%%%%%%%%%%%%%%%%%%%%%%%%%%%%%%%%%%%%%%%%%%%%%%%%%%%%%%%%%%%%%%

\titleformat{\subsection}[runin]{\bfseries}{}{}{}[.]
\titleformat{\subsubsection}[runin]{\bfseries}{}{}{}[.]

\makeatletter
\renewenvironment{proof}[1][\proofname]{%
   \par\pushQED{\qed}\normalfont%
   \topsep6\p@\@plus6\p@\relax
   \trivlist\item[\hskip\labelsep\bfseries#1\@addpunct{.}]%
   \ignorespaces
}{%
   \popQED\endtrivlist\@endpefalse
}
\makeatother

%%%%%% Indentation for new paragraphs %%%%%%%%%%%%%%%%%%%%%%%%%%%%%%%%%%%%%%%%%

%\usepackage{parskip}

%\makeatletter
%\def\thm@space@setup{%
%  \thm@preskip=\parskip \thm@postskip=0pt
%}
%\makeatother

%\allowdisplaybreaks

%%%%%%% Space around chapter header %%%%%%%%%%%%%%%%%%%%%%%%%%%%%%%%%%%%%%%%%%%

%\titleformat{\chapter}[display]
%    {\normalfont\huge\bfseries}{\chaptertitlename\ \thechapter}{20pt}{\Huge}
%\titlespacing*{\chapter}{0pt}{0pt}{40pt}

%%%%%%% Index / Indice de Materias %%%%%%%%%%%%%%%%%%%%%%%%%%%%%%%%%%%%%%%%%%%%

%\makeindex

%%%%%%% Page Layout %%%%%%%%%%%%%%%%%%%%%%%%%%%%%%%%%%%%%%%%%%%%%%%%%%%%%%%%%%%

%\fancyhf{}
%\renewcommand{\chaptermark}[1]{\markboth{\textsf{#1}}{}}
%\renewcommand{\sectionmark}[1]{\markboth{\textsf{#1}}{}}
%\renewcommand{\chaptermark}[1]{\textsf{#1}{}}
%\renewcommand{\sectionmark}[1]{\textsf{#1}{}}
%\fancyhead[LO]{\nouppercase{\rightmark}}
%\fancyhead[RE]{\nouppercase{\leftmark}}
%\fancyfoot[LE,RO]{\thepage}
%\renewcommand{\chaptermark}[1]{\textsf{#1}{}}
%\renewcommand{\headrulewidth}{.4pt}
%\renewcommand{\footrulewidth}{.4pt}

%%%% Abstract Alignement %%%%%%%%%%%%%%%%%%%%%%%%%%%%%%%%%%%%%%%%%%%%%%%%%%%%%%

%\renewcommand{\absnamepos}{flushleft}
\addtocounter{tocdepth}{0}
%%%%%%%%%%%%%%%%%%%%%%%%%%%%%%% blackboard Bold fonts %%%%%%%%%%%%%%%%%%%%%%%%%

%\newcommand{\AA}{\mathbb{A}}

\newcommand{\CC}{\mathbf{C}}

\newcommand{\EE}{\mathbf{E}}
\newcommand{\FF}{\mathbf{F}}

\newcommand{\NN}{\mathbf{N}}

\newcommand{\RR}{\mathbf{R}}
\newcommand{\TT}{\mathbf{T}}

\newcommand{\ZZ}{\mathbf{Z}}

\newcommand{\C}{\mathcal{C}}
\newcommand{\T}{\mathcal{T}}

\newcommand{\D}{\mathcal{D}}
\newcommand{\F}{\mathcal{F}}

\def\S{\mathcal{S}}

%%%%%%%%%%%%%%%%%%%%%% Mathematics %%%%%%%%%%%%%%%%%%%%%%%%%%%%%%%%%%%%%%%%%%%%

%%%% arrows %%%%%%%%%%%%%%%%%%%%%%%%%%%%%%%%%%%%%%%%%%%%%%%%%%%%%%%%%%%%%%%%%%%

%%%% logic %%%%%%%%%%%%%%%%%%%%%%%%%%%%%%%%%%%%%%%%%%%%%%%%%%%%%%%%%%%%%%%%%%%%

%%%% limit in diferent topolgies %%%%%%%%%%%%%%%%%%%%%%%%%%%%%%%%%%%%%%%%%%%%%%

\DeclareMathOperator*\baulim{\mathrm{bau-lim}}

%%%% other math operadors %%%%%%%%%%%%%%%%%%%%%%%%%%%%%%%%%%%%%%%%%%%%%%%%%%%%%

\DeclareMathOperator*\spn{\mathrm{span}}

\DeclareMathOperator*\wstspan{\overline{\mathrm{span}^{\wast}}}

\def\Xint#1{\mathchoice
{\XXint\displaystyle\textstyle{#1}}%
{\XXint\textstyle\scriptstyle{#1}}%
{\XXint\scriptstyle\scriptscriptstyle{#1}}%
{\XXint\scriptscriptstyle\scriptscriptstyle{#1}}%
\!\int}
\def\XXint#1#2#3{{\setbox0=\hbox{$#1{#2#3}{\int}$ }
\vcenter{\hbox{$#2#3$ }}\kern-.6\wd0}}

\def\dashint{\Xint-}

%%%% C ast algebras %%%%%%%%%%%%%%%%%%%%%%%%%%%%%%%%%%%%%%%%%%%%%%%%%%%%%%%%%%

\def\1{\mathbf{1}}
\def\Id{\mathrm{id}}
\def\H{{H}}

\def\M{\mathcal{M}}

\def\N{\mathcal{N}}

\newcommand{\loc}{\mathrm{loc}}

%%%% semigroups %%%%%%%%%%%%%%%%%%%%%%%%%%%%%%%%%%%%%%%%%%%%%%%%%%%%%%%%%%%%%%%

%%%% Geometry %%%%%%%%%%%%%%%%%%%%%%%%%%%%%%%%%%%%%%%%%%%%%%%%%%%%%%%%%%%%%%%%%

%%%%% analysis %%%%%%%%%%%%%%%%%%%%%%%%%%%%%%%%%%%%%%%%%%%%%%%%%%%%%%%%%%%%%%%%

\newcommand{\vertiii}[1]{
 {\left\vert\kern-0.25ex\left\vert\kern-0.25ex\left\vert #1 
  \right\vert\kern-0.25ex\right\vert\kern-0.25ex\right\vert}
}

%%%% Operators %%%%%%%%%%%%%%%%%%%%%%%%%%%%%%%%%%%%%%%%%%%%%%%%%%%%%%%%%%%%%%%%

\def\Prj{\mathscr{P}}
\def\B{\mathcal{B}}

\def\L{\mathcal{L}}
\def\wast{{{\mathrm{w}}^\ast}}

\def\cb{\mathrm{cb}}

\def\Tr{\mathrm{Tr}}

%%%% group %%%%%%%%%%%%%%%%%%%%%%%%%%%%%%%%%%%%%%%%%%%%%%%%%%%%%%%%%%%%%%%%%%%%

%%%% modules %%%%%%%%%%%%%%%%%%%%%%%%%%%%%%%%%%%%%%%%%%%%%%%%%%%%%%%%%%%%%%%%%%

%%%% quantum relations %%%%%%%%%%%%%%%%%%%%%%%%%%%%%%%%%%%%%%%%%%%%%%%%%%%%%%%%

\def\R{\mathcal{R}}

%%%% tensor products %%%%%%%%%%%%%%%%%%%%%%%%%%%%%%%%%%%%%%%%%%%%%%%%%%%%%%%%%%

\def\algtensor{\otimes_{\mathrm{alg}}}

\def\weaktensor{ \, \overline{\otimes} \, }

%%%%%%%%% Analysis %%%%%%%%%%%%%%%%%%%%%%%%%%%%%%%%%%%%%%%%%%%%%%%%%%%%%%%%%%%%

\title{
 Noncommutative strong maximals and \\
  almost uniform convergence in several directions
}

\author{
  Jos\'e M. Conde-Alonso,
  Adri\'an M. Gonz\'alez-P\'erez\thanks{Supported by the European
    Research Council. Consolidator Grant \texttt{614195}
    - RIGIDITY.
  } \ and 
  Javier Parcet\thanks{Partially supported by 
    CSIC Grant \texttt{PIE-201650E030} and ICMAT Severo Ochoa Grant \texttt{SEV-2015-0554}. \hfill \null \hskip70pt \null \hskip12pt \textbf{Keywords.} von Neumann algebras; noncommutative $L_p$-martingales; ergodic mean; maximal operators.
  } 
}
\date{}

\begin{document}

\maketitle

\vskip-25pt 

\null

\begin{abstract}
Our first result is a noncommutative form of Jessen/Marcinkiewicz/Zygmund theorem for the maximal limit of multiparametric martingales or ergodic means. It implies bilateral almost uniform convergence (a noncommutative analogue of a.e. convergence) with initial data in the expected Orlicz spaces. A key ingredient is the introduction of the $L_p$-norm of the $\limsup$ of a sequence of operators as a localized version of a $\ell_\infty/c_0$-valued $L_p$-space. In particular, our main result gives a strong $L_1$-estimate for the $\limsup$ |as opposed to the usual weak $L_{1,\infty}$-estimate for the $\sup$| with interesting consequences for the free group algebra.  

\vskip3pt
   
Let $\L \FF_2$ denote the free group algebra with $2$ generators and consider the free Poisson semigroup generated by the usual length function. It is an
open problem to determine the largest class inside $L_1(\L \FF_2)$ for which the free Poisson semigroup converges to the initial data. Currently, the best          known result is $L \log^2 L(\L \FF_2)$. We improve this result by adding to it the operators in $L_1(\L \FF_2)$ spanned by words without signs changes. Contrary to other related results in the literature, this set grows exponentially with length. The proof relies on our estimates for the noncommutative $\limsup$ together with new transference techniques.
  
\vskip3pt

We also establish a noncommutative form of C\'ordoba/Feffermann/Guzm\'an inequality for the strong maximal. More precisely, a weak $(\Phi,\Phi)$ inequality |as opposed to weak $(\Phi,1)$| for noncommutative multiparametric martingales and $\Phi(s) = s (1 + \log_+ s)^{2 + \varepsilon}$. This logarithmic power is an $\varepsilon$-perturbation of the expected optimal one. The proof combines a refinement of Cuculescu's construction with a quantum probabilistic interpretation of M. de Guzm\'an's original argument. The commutative form of our argument gives the simplest known proof of this classical inequality. A few interesting consequences are derived for Cuculescu's projections.
\end{abstract}

\vskip-20pt 

\null

\section*{\bf Introduction} 

\vskip-10pt 

Given $f \in L_1(\RR^n)$, Lebesgue's differentiation theorem gives 
\[
\lim_{r \to 0} A_rf(x) := \lim_{r \to 0} \frac1{|B_r(x)|} \int_{B_r(x)} f(y) \, d y = f(x) \quad \mbox{a.e. $x \in \RR^n$.}
\]
In one dimension, this is a general form of the fundamental theorem of calculus. Almost everywhere convergence to original data is also known to hold in other scenarios when replacing Euclidean balls by other averaging processes. In probability theory, we use instead conditional expectations onto martingale filtrations of $\sigma$-subalgebras. In ergodic theory, a similar result holds as well for ergodic means or subordinate Markovian semigroups. In all these settings, the strategy reduces to prove a \emph{maximal inequality}. That is, a quantitative estimate for the maximal operator $Mf(x) = \sup_{\alpha} |A_\alpha(f)|$ where $(A_\alpha)_{\alpha}$ is an averaging process, as those described above. The underlying maximal inequality behind Lebesgue differentiation is the celebrated Hardy-Littlewood maximal theorem \cite{HardyLil1930Max}. Almost everywhere convergence of martingale approximations relies on Doob's maximal inequality \cite{Doob1953stochastic}. In ergodic theory, maximal inequalities for ergodic means and Markovian semigroups where respectively established by Yosida/Kakutani \cite{YosidaKakutani1939, YosidaKakutani1941} and Dunford/Schwartz \cite{DunfordSchwartz1956}. 

In all the scenarios laid down before, the maximal operator is bounded as a map $L_1 \to L_{1,\infty}$. That is usually referred to as an operator of weak type $(1,1)$. In particular, we obtain almost everywhere convergence $A_\alpha(f) \to f$ for every $f \in L_1$, which is best possible in the context of one single parameter $\alpha$. The problem above makes sense for multiple directions though. Indeed, given a function $f \in L_1(\RR^2)$ one can ask whether the two-parameter averages
\[
  (A_n \otimes A_m)f(x,y)
  \, = \,
   \frac{n}{2} \, \frac{m}{2} \, \int_{x - \frac1{n}}^{x + \frac1{n}} \int_{y - \frac1{m}}^{y + \frac1{m}} f(s,t) \, d s \, d t
  \, = \,
  \dashint_{x - \frac1{n}}^{x + \frac1{n}} \dashint_{y - \frac1{m}}^{y + \frac1{m}} f(s,t) \, d s \, d t
\]
converge a.e. to $f$ as $n,m \to \infty$. This is easy to prove when $f$ is locally in $L_p$ for some $p>1$, since the associated maximal operator |usually referred to as the \emph{strong maximal operator}| is bounded from $L_p$ to $L_p$ by a Fubini type argument and Marcinkiewicz interpolation. This technique fails in the quasi-Banach space $L_{1,\infty}$, where tensor product arguments become inefficient since one can produce simple tensors in $L_{1,\infty}(\RR) \algtensor L_{1,\infty}(\RR)$ that are not in $L_{1,\infty}(\RR^2)$. This difficulty is not an artifice of the proof. Indeed, in 1933, Saks constructed the first integrable function $f \in L_1(\RR^2)$ for which $(A_n \otimes A_m)f$ fails to converge a.e. to $f$. The multiparametric problem was thus reoriented towards finding the largest class inside $L_1$ for which there is almost everywhere convergence in several directions. The first positive result was obtained by Jessen, Marcinkiewicz and Zygmund in \cite{Jessen1935}, where they proved that $L \log^{d-1} L$ suffices in $d$ parameters. They also showed that this space was optimal in terms of Orlicz classes. Their main result for $d=2$ states that, replacing the $\sup$ by a $\limsup$ in the definition of maximal operator, gives a sublinear map $L \log L \to L_1$
\begin{equation}
  \label{eq:classicalJMZ}
  \Big\| \limsup_{n,m \to \infty} \big|(A_n \otimes A_m)f\big| \Big\|_1
  \, \lesssim \,
  \| f \|_{L \log L}.
  \tag{JMZ}
\end{equation}
The crucial point here is that replacing $\sup$'s by $\limsup$'s allows for strong $L_1$-estimates (instead of weak ones) where Fubini-type arguments are available. This is not weaker nor stronger than the usual maximal $L_{1,\infty}$-estimates and it still yields a.e. convergence to initial data. Moreover, their functional-analytic approach is very flexible and their argument works verbatim for multiparametric martingales and ergodic means. Later, C\'ordoba/Fefferman and independently M. de Guzm\'an proved in \cite{CorFeff1975,Guzman1972ProductBases} the following weak type inequality for the strong maximal, whose restricted type form goes back to \cite{Jessen1935}. For every $\lambda > 0$ and positive $f \in L \log L$, it holds that
\begin{equation}
  \label{eq:classicalCF}
  \Big| \Big\{ \sup_{n, m \ge 1} (A_n \otimes A_m)f(x,y) > \lambda \Big\} \Big|
  \, \lesssim \,
  \int \hskip-5pt \int \frac{f}{\lambda} \Big( 1 + \log_+ \Big(\frac{f}{\lambda} \Big) \Big) \, d x \, d y.
  \tag{CFG}
\end{equation}
Inequalities \eqref{eq:classicalJMZ} and \eqref{eq:classicalCF} extend to $d$ variables
just by working with the Orlicz spaces $L \log^{d - 1} L$.
%We also mention that the inequality \eqref{eq:classicalCF} works in the context of martingales \cite{Walsh1979StrongMartingales, Frangos1988} and with respect to more abstract differentiation bases, see \cite{Guzman1975}.

\vskip-20pt

\null

% nc stuff
\subsection*{\bf Noncommutative maximal inequalities} The theory of von Neumann algebras extends measure theory to noncommutative analogues of $L_\infty$-spaces. Given the local nature of the results above, we shall be concerned with quantum probability spaces. In other words, with pairs $(\M,\tau)$ given by a finite von Neumann algebra $\M$ with a normalized trace $\tau$. Many measure-theoretical and probabilistic notions admit a natural analogue in this setting. Noncommutative $L_p$-spaces, noncommutative martingales and ergodic analogues will be of particular interest for us. Avoiding by now precise definitions for these objects, the strategy remains to prove \lq\lq almost everywhere convergence" of certain averaging processes by establishing appropriate \lq\lq maximal inequalities" in optimal endpoint spaces. The lack of points after quantization forces though to introduce more involved notions for the noncommutative analogues of the expressions just quoted. The notion of almost everywhere convergence admits natural generalizations that go back at least to Lance \cite{Lance1976Ergodic}. Egorov's theorem in a classical probability space $(\Omega,\mu)$ gives that $f_n \to f$ a.e. iff there exist measurable sets $E$ of arbitrarily small measure such that $(f - f_n) \1_{\Omega \setminus E} \to 0$ uniformly. This alternative form of convergence is usually called \emph{almost uniform convergence}. It generalizes to von Neumann algebras replacing the sets $E$ by  projections $e \in \M$ of arbitrarily small trace. By noncommutativity, the left and/or right position of $\1 - e$ with respect to the sequence $f-f_n$ matters. The bilateral choice $(\1 - e) (f - f_n) (\1 - e)$ is the right one with original data $f \in L_1 \setminus L_2$, as we shall recall in the body of the paper. 

The formulation of noncommutative maximal inequalities is already subtle, since it is not possible to define the supremum of a family of noncommuting operators in a meaningful way. Namely, as in the Introduction of \cite{JunXu2007}, one can produce examples of $2 \times 2$ noncommuting matrices $A_1, A_2, A_3$ for which no $2 \times 2$ matrix $A$ satisfies $\langle \xi, A \xi \rangle = \max \{ \langle \xi, A_j \xi \rangle: j=1,2,3 \}$ for all $\xi \in \RR^2$. The trick to overcome this for weak type maximal inequalities is to use $$\Big\{ \sup_{n \ge 1} f_n > \lambda \Big\} = \Omega \setminus \bigcap_{n \ge 1} \Big\{ f_n \le \lambda \Big\}.$$ That is, the $\lambda$-level set of the maximal function is the complement of the set where all entries $f_n$ are bounded above by $\lambda$. An analogous formulation is possible for von Neumann algebras. Let $\Prj(\M)$ be the lattice of projections in $\M$. Given $f \in L_1(\M)_+$ and $\lambda > 0$, the goal is to find a projection $q(\lambda) \in \Prj(\M)$ such that 
\begin{enumerate}[leftmargin=1.2cm, label={\rm (\roman*)}, ref={\rm (\roman*)}]
  \item $q(\lambda) \, A_n(f) \, q(\lambda) \, \leq \, \lambda \, q(\lambda)$.
  \item $\displaystyle{
          \tau \big( q(\lambda)^\perp \big) = \tau \big( \1 - q(\lambda) \big)
          \, \lesssim \, 
          \frac{\| f \|_1}{\lambda}.
        }$
\end{enumerate}
The noncommutative form of Yosida-Kakutani maximal theorem for ergodic averages was obtained by Yeadon \cite{Yeadon1977,Yeadon1980II} following earlier results of Lance \cite{Lance1976Ergodic}. The noncommutative endpoint for Doob's maximal inequality was proved by Cuculescu in his seminal paper \cite{Cuculescu1971}. As in the classical case, these inequalities can be used to prove that $A_n(f) \to f$ bilaterally almost uniformly with initial data in $L_1(\M)$. It is known to experts though that one should regard these inequalities as \lq\lq weakened forms" of their classical counterparts. As a byproduct of our results, we shall be able to quantify to what extend this happens in the context of Cuculescu's construction of $q(\lambda)$.   

% Lp theory
By Marcinkiewicz interpolation, the alluded maximal operators are $L_p$-bounded over classical measure spaces $(\Omega,\mu)$ for all $p > 1$. It took however more than 25 years to get noncommutative $L_p$-maximal inequalities in the line of Cuculescu and Yeadon theorems. Indeed, the $L_p(\Omega)$-norm of a maximal function $Mf = \sup_n f_n$ must be understood as the $L_p(\Omega;\ell_\infty)$-norm of the sequence $(f_n)_n$ to be meaningful in noncommutative algebras. The \emph{mixed-norm spaces} $L_p(\M;\ell_\infty)$ where introduced by Pisier for hyperfinite algebras \cite{Pi1998} |later by Junge in full generality| and required the full strength of operator space theory. It was Junge who extended in 2002 
Doob's $L_p$-maximal inequality for noncommutative martingales with an ad hoc argument heavily relying in Hilbert module theory and duality \cite{Jun2002Doob}. A few years later, Junge and Xu obtained $L_p$-maximal inequalities for ergodic means and subordinated Markovian semigroups \cite{JunXu2007}. The key point was a novel interpolation theorem for families of positive preserving maps |as a substitute for Marcinkiewicz interpolation| which allows to infer results from Cuculescu and Yeadon \lq\lq extra-weak" inequalities. The subtle price for this is that the interpolation constants grow like $(p-1)^{-2}$ as $p \to 1^+$, the square of the classical growth rate. This different quantitative behavior translates into qualitative properties. Indeed, using a noncommutative version of Yano's extrapolation, it was shown in \cite{Hu2009} that $(A_n)_n: L \log^2 L(\M) \to L_1(\M;\ell_\infty)$, where the extra exponent in the logarithm is directly connected to the extra exponent in the $L_p$-operator norm. Both quantitative and qualitative results above are the best possible. Again, one could argue in hindsight that such deviations from the classical results arise from the extra-weak nature of Cuculescu's and Yeadon's projections. We shall find additional evidence of this here. 

Our primary goal in this paper is to investigate noncommutative strong maximals, almost uniform convergence in several directions to initial data and applications in noncommutative harmonic analysis. More precisely, we shall prove noncommutative forms of Jessen/Marcinkiewicz/Zygmund theorem \eqref{eq:classicalJMZ} and C\'ordoba/Fefferman/Guzm\'an inequality \eqref{eq:classicalCF} for quite general processes. Beyond the (expected) tensor products of martingale filtrations, ergodic means or subordinated Markov semigroups, our examples include free products and other (noncommuting) compositions. A result of independent interest in the free group algebra will also be proved. Namely, combining our techniques towards noncommutative \eqref{eq:classicalJMZ} with new transference methods, we shall enlarge the largest known subspace of the predual of the free group algebra for which the free Poisson semigroup $\lambda_\omega \mapsto \exp(-t|w|) \lambda_\omega$ converges bilaterally almost uniformly to initial data as $t \to 0^+$. 

\subsection*{{\bf A. The noncommutative Jessen-Marcinkiewicz-Zygmund theorem}}
\label{A}
Let us fix in what follows a quantum probability space $(\M,\tau)$. In order to generalize \eqref{eq:classicalJMZ} we start by introducing a notion of noncommutative $\limsup$ for finite von Neumann algebras. Let $c_0 \subset \ell_\infty$ be the subspace of sequences converging to $0$. Following the intuition that the $\limsup$ of a scalar-valued sequence $(a_n)_n$ satisfies the identity
\[
  \limsup_{n \to \infty} |a_n| = \| (a_n)_n \|_{\ell_\infty/c_0},
\] \vskip2pt
it would be natural to define the $L_p$-norm of the noncommutative $\limsup$ as the seminorm associated to $L_p[\ell_\infty]/L_p[c_0]$. Nevertheless, this definition does not work as intended, not even for sequences of functions in classical measure spaces as we explain in Remark \ref{rmk:WhyNotQuotient}. This imposes to work with a localized version of it. Given $(f_n)_n \subset L_p(\M)$ and recalling that we write $\Prj(\M)$ for the projection lattice in $\M$, set \vskip2pt
\begin{equation*}
  %\label{eq:DeflimsupIntro}
  \Big\| {\limsup_{n \to \infty}}^{+} f_n \Big\|_p
  \, = \, 
  \sup_{\varepsilon > 0} \, \inf_{\substack{e \in \Prj(\M)\\ \tau(e) < \varepsilon}} \, \inf_{N \ge 1} \,
  \Big\{ \big\| (e^\perp \, f_n \, e^\perp)_{n > N} \big\|_{L_p[\ell_\infty]} \Big\},
\end{equation*}
where the $\limsup^+$ is just a suggestive notation and does not correspond to a well-defined operator.
The mixed-norm space obtained by closure in that seminorm is denoted $L_p^\loc(\M;\ell_\infty/c_0)$, where the $\loc$ stems from the fact that we localize in corners by projections with trace arbitrarily close to $1$. In fact this space does arise as an actual quotient of $L_p(\M; \ell_\infty)$, further details will be given in the body of the paper. Like in the classical case, boundedness of the noncommutative $\limsup^+$ can be used as a tool to obtain bilateral almost uniform convergence, b.a.u. in short. But, contrary to the case of the supremum or the noncommutative $\sup^+$, a partial converse holds
\[
  \baulim_{n \to \infty} f_n = f \ \Longrightarrow \ \Big\| {\limsup_{n \to \infty}}^{+} f_n \Big\|_p \le \|f\|_p.
\]
Propositions \ref{prp:BAUImpLimsup} and \ref{prp:LimsupImpBAU} give further explanations. Here is our main technical result for the $\limsup^+$.

% Theorem + corollaries B y C
\begin{Atheorem}
\label{thm:MainTool}
Let $(\M,\tau)$ be a finite von Neumann algebra equipped with a tracial state $\tau$. Let us consider two families $(A_n)_n$ and $(B_m)_m$ of positivity-preserving operators in $L_1(\M)$. Assume the following conditions hold$\hskip1pt:$ 
  \begin{enumerate}[leftmargin=1.2cm, label={\rm (\roman*)}, ref={\rm (\roman*)}]
    \item \label{itm:MainTool.1}
    $\displaystyle{\big\| (B_m)_m: L_\Phi(\M) \to L_1(\M; \ell_\infty) \big\| < \infty}$.
    \item \label{itm:MainTool.2}
    There is an operator $F: L_1(\M) \to L_1(\M)$ such that $A_n(f) \to F(f)$ \textrm{b.a.u.} for $f \in L_1(\M)$.
  \end{enumerate}
  Then, the following inequality holds as well
  \begin{equation*}
    \Big\| {\limsup_{n,m \to \infty}}^{+} A_n \circ B_m (f) \Big\|_1
    \leq
    \| F \| \, \big\| (B_m)_m: L_\Phi(\M) \to L_1(\M; \ell_\infty) \big\| \, \| f \|_\Phi.
  \end{equation*}
\end{Atheorem}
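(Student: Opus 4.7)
The strategy follows the classical Jessen--Marcinkiewicz--Zygmund philosophy: dominate the two-parameter sequence $(A_n B_m f)_{n,m}$ by the one-parameter sequence $(A_n a)_n$ obtained by applying $A_n$ to a single positive envelope $a \in L_1(\M)_+$ of $(B_m f)_m$, and then use the b.a.u.\ convergence of $A_n$ on $a$ to control the $\limsup^+$. We may reduce to positive $f$ by the standard four-piece decomposition into positive parts, absorbing a universal constant. For $f \in L_\Phi(\M)_+$, hypothesis \ref{itm:MainTool.1} combined with the order characterization of the positive-sequence norm in $L_1(\M;\ell_\infty)$,
\[
  \big\|(x_m)_m\big\|_{L_1(\M;\ell_\infty)} \, = \, \inf\bigl\{\tau(a) : \ 0 \leq x_m \leq a \ \text{for all} \ m\bigr\},
\]
produces, for every $\eta > 0$, an envelope $a \in L_1(\M)_+$ satisfying $B_m f \leq a$ for all $m$ and $\tau(a) \leq C_1 \|f\|_\Phi + \eta$, where $C_1 := \|(B_m): L_\Phi(\M) \to L_1(\M;\ell_\infty)\|$.

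Positivity preservation of $A_n$ upgrades this to $A_n B_m f \leq A_n a$, and two-sided compression by any projection $e \in \Prj(\M)$ preserves the order, so $e^\perp A_n B_m f e^\perp \leq e^\perp A_n a e^\perp$ for all $n,m$. Now hypothesis \ref{itm:MainTool.2} applied to the single element $a$ yields, for each $\varepsilon > 0$, a projection $e \in \Prj(\M)$ with $\tau(e) < \varepsilon$ and $\|e^\perp(A_n a - F a) e^\perp\|_\infty \to 0$. For any $\delta > 0$, we may therefore fix $N$ such that $e^\perp A_n a e^\perp \leq e^\perp F(a) e^\perp + \delta \1$ for every $n > N$. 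Combining these bounds, we obtain uniformly in $m \geq 1$,
\[
  0 \, \leq \, e^\perp A_n B_m f e^\perp \, \leq \, e^\perp F(a) e^\perp + \delta \1 \quad \text{for every } n > N.
\]
Applying the envelope characterization once more, this positive sequence has $L_1(\M;\ell_\infty)$-norm at most $\tau(e^\perp F(a) e^\perp) + \delta \leq \|F\| \, \tau(a) + \delta \leq \|F\|(C_1 \|f\|_\Phi + \eta) + \delta$. Taking $\inf_{e, N}$ and then $\sup_\varepsilon$ in the definition of $\|\limsup^+(\cdot)\|_1$, and letting $\eta, \delta \to 0$, delivers the claimed inequality.

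The central idea, and also the main subtlety, is precisely the passage to a single positive envelope $a$: it reduces the two-parameter maximal question about $(A_n B_m f)_{n,m}$ to a one-parameter b.a.u.\ convergence question about $(A_n a)_n$, for which hypothesis \ref{itm:MainTool.2} is tailor-made. The delicate interplay between the operator order, two-sided compression by a projection, and operator-norm convergence on large corners (exactly the content of b.a.u.\ convergence) is what makes the argument go through. Were one to try to replace $\limsup^+$ by $\sup^+$, this structure would collapse: there would be no mechanism to absorb the tail of $A_n a$ into an arbitrarily small operator on a large corner, which underlines why the $\limsup^+$ formulation, together with the localized quotient space $L_1^\loc(\M;\ell_\infty/c_0)$, is the essential framework here.
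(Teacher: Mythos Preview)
Your proof is correct and follows essentially the same route as the paper's: reduce to positive $f$, extract a positive envelope $a \in L_1(\M)_+$ dominating all $B_m f$ via the order characterization of $L_1(\M;\ell_\infty)$, use positivity of $A_n$ to pass to $A_n a$, and then exploit b.a.u.\ convergence $A_n a \to F(a)$ to bound the $\limsup^+$. The only difference is one of packaging: the paper invokes the monotonicity of the $\limsup^+$ seminorm (via Proposition~\ref{prp:AltNorm}), Lemma~\ref{lem:MultiInd}, and Proposition~\ref{prp:BAUImpLimsup} as black boxes to conclude $\|\limsup_{n,m}^+ A_n B_m f\|_1 \le \|\limsup_n^+ A_n a\|_1 = \|F(a)\|_1$, whereas you unpack these steps explicitly by working on a corner $e^\perp(\cdot)e^\perp$ and exhibiting the envelope $e^\perp F(a) e^\perp + \delta \1$.
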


Many noncommutative forms of \eqref{eq:classicalJMZ} arise easily from Theorem \ref{thm:MainTool}. Their proofs are elementary and follow after combining Theorem \ref{thm:MainTool} with noncommutative extrapolation and bilateral almost uniform convergence for elements of $L_1(\M)$. Let us illustrate what it gives for noncommutative martingales. We first give the most general formulation for the abstract composition of a couple of not necessarily commuting martingale filtrations. Then, we make explicit two cases of particular interest. In the first one we pick filtrations living in different components of a tensor product, which gives rise to the closest generalization of Jessen/Marcinkiewicz/Zygmund theorem. In the second one, we just replace tensor products by free products, which only makes sense in the noncommutative realm. As an illustration, the free product of two dyadic filtrations in the one-dimensional torus $\TT$ can only be defined in the free group algebra and the result below gives an estimate for the $L_1$-norm of the noncommutative $\limsup$ and almost uniform convergence in two directions with initial data in $L \log^2 L$.   

\begin{Acorollary}
  \label{cor:MultiMartingale}
  Let $(\M, \tau)$ be a finite von Neumann algebra and $(\M_n^{[i]})_n$ be nested
  sequences of von Neumann subalgebras
  $\M_n^{[i]} \subset \M_{n + 1}^{[i]} \subset \M$ for $i=1,2$.
  Let us denote the associated conditional
  expectations by $\EE_n^{[i]}: \M \to \M_n^{[i]}$. Then 
  \begin{enumerate}[leftmargin=1.2cm, ref={\rm (\roman*)}, label={\rm (\roman*)}]
    \item \label{itm:MultiMartingale.1}
    It holds that
    \[
      \Big\| \, {\limsup_{n, m \to \infty}}^{+} \big(\EE_n^{[1]} \circ \EE_m^{[2]}\big)(f) \Big\|_1
      \, \lesssim \,
      \| f \|_{L \log^2 L}.
    \]
    \item \label{itm:MultiMartingale.2}
    Let $(\M_n^{[i]})_n$ be a weak-$*$ dense filtration of $\M^{[i]} \subset \M$ for $i =1,2$. Assume in addition that $(\M, \tau)$  is either isomorphic to
    $(\M^{[1]} \weaktensor \M^{[2]},\tau_1 \otimes \tau_2)$ or to
    $(\M^{[1]} \ast \M^{[2]}, \tau_1 \ast \tau_2)$. Then, we respectively get
    \begin{eqnarray*}
    \Big\| \, {\limsup_{n, m \to \infty}}^{+} \big(\EE_n^{[1]} \otimes \EE_m^{[2]}\big)(f) \Big\|_1& \lesssim & \|f\|_{L \log^2 L}, \\ 
    \Big\| \, {\limsup_{n, m \to \infty}}^{+} \big(\EE_n^{[1]} \hskip1pt \ast \hskip2pt \EE_m^{[2]}\big)(f) \Big\|_1 & \lesssim & \|f\|_{L \log^2 L}.
    \end{eqnarray*}
    In both cases we get b.a.u. convergence to $f$ as $n, m \to \infty$,
    for every $f \in L \log^2 L(\M)$.
  \end{enumerate}
\end{Acorollary}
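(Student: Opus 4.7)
The plan is a direct application of Theorem \ref{thm:MainTool} with $A_n := \EE_n^{[1]}$ and $B_m := \EE_m^{[2]}$, so the task reduces to verifying its two hypotheses in the martingale setting; parts \ref{itm:MultiMartingale.1} and \ref{itm:MultiMartingale.2} will only differ in how the two filtrations are embedded in $\M$.

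To verify hypothesis \ref{itm:MainTool.1} with Orlicz function $\Phi(s) = s(1 + \log_+ s)^2$, I would combine Junge's noncommutative Doob maximal inequality \cite{Jun2002Doob} with the sharp $(p-1)^{-2}$ operator norm growth established in \cite{JunXu2007} and the Yano-type extrapolation from \cite{Hu2009}; together these yield the boundedness $(\EE_m^{[2]})_m : L\log^2 L(\M) \to L_1(\M; \ell_\infty)$. For hypothesis \ref{itm:MainTool.2}, Cuculescu's martingale convergence theorem \cite{Cuculescu1971} supplies $\EE_n^{[1]}(f) \to F(f)$ bilaterally almost uniformly for every $f \in L_1(\M)$, where $F$ is the conditional expectation onto the weak-$*$ closure of $\bigcup_n \M_n^{[1]}$; as $F$ is an $L_1$-contraction, Theorem \ref{thm:MainTool} yields part \ref{itm:MultiMartingale.1} at once.

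For part \ref{itm:MultiMartingale.2}, consider the amplified filtrations $\widetilde\M_n^{[1]} := \M_n^{[1]} \weaktensor \M^{[2]}$ and $\widetilde\M_m^{[2]} := \M^{[1]} \weaktensor \M_m^{[2]}$ inside $\M = \M^{[1]} \weaktensor \M^{[2]}$ in the tensor case, and analogously inside $\M^{[1]} \ast \M^{[2]}$ in the free case. The associated trace-preserving conditional expectations $\widetilde\EE_n^{[1]}, \widetilde\EE_m^{[2]}$ on $\M$ commute, and their composition equals $\EE_n^{[1]} \otimes \EE_m^{[2]}$ or $\EE_n^{[1]} \ast \EE_m^{[2]}$ respectively; applying part \ref{itm:MultiMartingale.1} to this pair of filtrations then delivers the claimed $L\log^2 L \to L_1$ estimate for the $\limsup^+$ in both situations. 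Bilateral almost uniform convergence follows from the $\limsup^+$ bound via Proposition \ref{prp:LimsupImpBAU} together with a standard density argument: for $f \in L\log^2 L(\M)$ and $\varepsilon > 0$, choose some $f_0$ in $\M_N^{[1]} \weaktensor \M_N^{[2]}$ or $\M_N^{[1]} \ast \M_N^{[2]}$ with $\|f - f_0\|_{L\log^2 L} < \varepsilon$, so that the double limit stabilizes on $f_0$ for $n,m \ge N$; writing the error $(\widetilde\EE_n^{[1]} \widetilde\EE_m^{[2]})(f) - f$ as the analogous expression in $f - f_0$ and applying the bound just proved, its $\limsup^+$ in $L_1$-norm is controlled by a constant times $\varepsilon$, hence vanishes.

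The most delicate step I anticipate is the commutation of the two amplified conditional expectations in the free product case, which one verifies by testing on alternating reduced words: each amplified expectation acts by the original conditional expectation on letters from its own factor and as the identity on letters from the other factor, so the two compositions coincide on a weak-$*$ dense subset, and this common composition realizes the natural free product $\EE_n^{[1]} \ast \EE_m^{[2]}$.
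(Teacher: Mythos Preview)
Your proposal is correct and follows essentially the same route as the paper: apply Theorem~\ref{thm:MainTool} with $A_n=\EE_n^{[1]}$, $B_m=\EE_m^{[2]}$, verifying hypothesis~\ref{itm:MainTool.1} via the extrapolation package (Theorem~\ref{thm:MartingalesLpMax}\ref{eq:MartingalesLpMax2}) and hypothesis~\ref{itm:MainTool.2} via Cuculescu. Your treatment of part~\ref{itm:MultiMartingale.2} by amplifying the filtrations and reducing to part~\ref{itm:MultiMartingale.1}, together with the explicit density argument for b.a.u.\ convergence, is more detailed than the paper's terse proof but uses the same ingredients; note that commutation of the amplified expectations, while true, is not actually needed---you only use one order of composition, so the step you flag as most delicate can be shortened to simply identifying $\widetilde\EE_n^{[1]}\circ\widetilde\EE_m^{[2]}$ with $\EE_n^{[1]}\ast\EE_m^{[2]}$ on reduced words.
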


Although formulated in the biparametric case, Corollary \ref{cor:MultiMartingale} extends to $d$-parameters just by working with the Orlicz class $L \log^{2 (d - 1)} L$ instead. The additional logarithmic powers |compared to $L \log^{d-1}L$ in the classical form of \eqref{eq:classicalJMZ}| are expected from the sharp known results for noncommutative maximals and Yano's extrapolation recalled above. In fact, we believe this is best possible. Given Markovian maps $T_i: \M \to \M$, we recall that an analogous statement holds true replacing the above conditional expectations by (multiparametric) ergodic means $$M_n(T_i) = \frac{1}{n+1} \sum_{k=0}^n T_i^k$$ or subordinated Markovian semigroups, see Corollary \ref{cor:MultErgodic}. As another illustration, we may also consider free products of convolutions in $\TT$ against arbitrarily small balls $B_r(0)$, in the line of the original formulation of Jessen, Marcinkiewicz and Zygmund. This gives another form of \eqref{eq:classicalJMZ} in the free group algebra $\L \FF_n = L_\infty(\TT) \ast \cdots \ast L_\infty(\TT)$ with no analog for general von Neumann algebras.  

The main result in \cite{HongSun2018} establishes bilateral almost uniform convergence for certain $d$-parametric ergodic means. The given argument extends though to Markovian ergodic means and martingale approximations as well. The underlying \lq\lq maximal inequality" is however very different to ours since they prove an inequality of the form 
\begin{equation} \label{HongThm} \tag{HS} 
  \begin{split}
    \Big\| \hskip2pt {\sup_{n,m \ge 1}}^{\hskip-5pt +} & \underbrace{M_n(T_1) \circ M_m(T_2)(f)}_{f_{n,m}} \Big\|_{1,\infty} \\
  & :=
  \sup_{\lambda >. 0} \, \inf \Big\{ \lambda \, \tau \big( \1 - q(\lambda) \big) :  q(\lambda) f_{n,m} q(\lambda) \le \lambda \1 \Big\}
  \, \lesssim \,
  \|f\|_{L \log^2 L}
  \end{split}
\end{equation}
under the assumption that the ergodic means are pairwise commuting. The advantage of replacing the noncommutative $\sup$ by our noncommutative $\limsup$ in Corollary \ref{cor:MultiMartingale} is that we get an $L_1$ endpoint space, instead of weak $L_1$. This is particularly useful to use Fubini-type arguments, which will be crucial in our analysis of the free Poisson semigroup in the free group algebra below. Another advantage of our formulation is that we do not need our approximating maps to be pairwise commuting.  

\vskip-20pt

\null

% Conjecture + Our strong theorem + compare with Guixiang's 
\subsubsection*{{\bf B. The noncommutative C\'ordoba-Fefferman inequality}}
\label{B}
As we already explained for \eqref{eq:classicalJMZ} it is natural to expect that the noncommutative form \eqref{eq:classicalCF} should hold with data in $L \log^{2(d-1)} L$ in $d$-parameters. As usual, we fix $d=2$ for simplicity and assume that $\M$ is the tensor or free product of $\M^{[i]}$ ($i=1,2$), both equipped with a weak-$*$ dense martingale filtration with associated conditional expectations $\EE_n^{[i]}$. Let $f_{n,m} = \EE^{[1]}_n \otimes \EE_m^{[2]}(f)$ or $f_{n,m} = \EE^{[1]}_n \ast \EE_m^{[2]}(f)$ accordingly. Then we conjecture that the following holds for every $f \in L \log^{2} L(\M)$: 

\begin{Bconjecture}
  \label{cjn:StrongIneqC}
  Given $\lambda > 0$ and $f \geq 0$, there exists $q(\lambda) \in \Prj(\M)$ satisfying
  \begin{enumerate}[leftmargin=1.2cm, ref={\rm (\roman*)}, label={\rm (\roman*)}]
    \item \label{itm:StrongIneqC.1} $q(\lambda) f_{n,m} q(\lambda) \, \le \, \lambda \, q(\lambda)$.
    \item \label{itm:StrongIneqC.2}
    $\displaystyle{
      \tau \big( \1 - q(\lambda) \big)
      \, \lesssim \,
      \tau \bigg\{ \frac{f}{\lambda} \Big( 1 + \log_+ \Big( \frac{f}{\lambda} \Big) \Big)^2 \bigg\}.
    }$
  \end{enumerate}
  %$$q(\lambda) f_{n,m} q(\lambda) \, \le \, \lambda \, q(\lambda) \quad \mbox{and} \quad \tau \big( \1 - q(\lambda) \big) \, \lesssim \, \tau \bigg\{ \frac{f}{\lambda} \Big( 1 + \log_+ \Big( \frac{f}{\lambda} \Big) \Big)^2 \bigg\}.$$
\end{Bconjecture}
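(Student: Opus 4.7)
My approach emulates M. de Guzm\'an's iterative proof of the classical strong maximal inequality \eqref{eq:classicalCF} in the quantum probabilistic setting. The three ingredients are a scale-invariant truncation, a refined Cuculescu construction in the inner filtration producing an operator-valued dominator, and the standard Cuculescu construction for the outer filtration.

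\textbf{Truncation and reduction.} Fix $\lambda > 0$ and $f \geq 0$. Using the spectral resolution of $f$, write $f = f_\flat + f_\sharp$ with $f_\flat = f \chi_{[0,\lambda/2]}(f)$. Positivity of the conditional expectations gives $\EE^{[1]}_n \otimes \EE^{[2]}_m(f_\flat) \leq (\lambda/2) \1$, so only $f_\sharp$ contributes to the level set at $\lambda$. A short Orlicz computation yields
\begin{equation*}
  \|f_\sharp\|_{L \log^2 L(\M)} \lesssim \lambda \cdot \tau\bigl\{(f/\lambda) \bigl(1 + \log_+(f/\lambda)\bigr)^2 \bigr\},
\end{equation*}
so that (ii) reduces to showing $\tau(\1 - q(\lambda)) \lesssim \|f_\sharp\|_{L \log^2 L}/\lambda$.

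\textbf{Two-layer Cuculescu.} The key new ingredient is a refinement of Cuculescu's construction that, applied to the inner filtration $(\EE^{[2]}_m)_m$ acting on $f_\sharp$, produces a positive operator $h \in L_1(\M)$ and a projection $q_2 \in \Prj(\M)$ compatible with the outer algebra, such that
\begin{equation*}
  q_2 \, \EE^{[2]}_m(f_\sharp) \, q_2 \leq h \quad (m \geq 1), \qquad \tau(h) + \lambda \tau(\1 - q_2) \lesssim \|f_\sharp\|_{L \log^2 L}.
\end{equation*}
This should be the projection-level form of the noncommutative $L \log^2 L \to L_1$ maximal endpoint, obtained by upgrading the Junge--Xu factorization $\EE^{[2]}_m(f_\sharp) = a F_m b$ to an operator-order bound via $a F b \leq \tfrac{1}{2}(\|F\|^2 a a^* + b^* b)$ and absorbing the unbounded part of $a a^* + b^* b$ inside a Cuculescu projection $q_2$. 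One then applies standard Cuculescu to $h$ with respect to $(\EE^{[1]}_n)_n$ at level $\lambda/4$, obtaining $q_1$ with $q_1 \EE^{[1]}_n(h) q_1 \leq (\lambda/4) q_1$ and $\tau(\1 - q_1) \leq 4 \tau(h)/\lambda$. Setting $q(\lambda) = q_1 \wedge q_2$ and using the compatibility of $q_2$ with $\EE^{[1]}_n$, one chains
\begin{equation*}
  q(\lambda) \, f^\sharp_{n,m} \, q(\lambda) \leq q(\lambda) \, \EE^{[1]}_n(h) \, q(\lambda) \leq (\lambda/4) \, q(\lambda),
\end{equation*}
while the trace bound follows from subadditivity of $\1 - (q_1 \wedge q_2)$.

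\textbf{Main obstacle.} The delicate point is the compatibility between $q_2$ and the outer filtration: to close the chain above, $q_2$ must commute with every $\EE^{[1]}_n$, which forces $q_2$ essentially into the small algebra $\bigcap_n \M^{[1]}_n \otimes \M^{[2]}$. Extracting such a projection while retaining the full $L \log^2 L$ mass is not provided by the currently available factorization theorems and likely requires a genuine extrapolation argument, costing a small logarithmic perturbation in the exponent. This is almost certainly the source of the $\varepsilon$-loss (exponent $2 + \varepsilon$ in place of the conjectured $2$) recorded in the abstract, which explains why the statement remains a conjecture rather than a theorem. The free product version has an additional difficulty: because $\EE^{[1]}_n$ and $\EE^{[2]}_m$ do not commute, the argument must be rerouted through a transference step along the lines of those developed elsewhere in the paper for the free Poisson semigroup.
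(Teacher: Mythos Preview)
You correctly recognize that the statement is a \emph{conjecture} and not a theorem: the paper does not prove it, and neither do you. What the paper establishes is the $\varepsilon$-perturbation Theorem~\ref{thm:StrongIneq}. Your final paragraph is honest about the gap, so in that sense the proposal is self-consistent. But your outline diverges from the paper's actual route to Theorem~\ref{thm:StrongIneq} in two substantive ways, and your diagnosis of where the $\varepsilon$ enters is not the paper's mechanism.

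\textbf{Order and nature of the two layers.} You run the inner filtration $(\EE_m^{[2]})_m$ first to produce an \emph{operator dominator} $h$ with $q_2\,\EE_m^{[2]}(f_\sharp)\,q_2 \le h$, and then apply Cuculescu to $h$ in the outer filtration. This is essentially the $L_1[\ell_\infty]$-route: it is exactly what Remark~\ref{rmk:LongRemark}\ref{itm:Guixiang} identifies with the Hong--Sun argument, which yields the weak $(\Phi,1)$ bound $\tau(\1-q(\lambda)) \lesssim \|f\|_{L\log^2 L}/\lambda$ but not the faster weak $(\Phi,\Phi)$ decay demanded by item~\ref{itm:StrongIneqC.2}. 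The paper's proof of Theorem~\ref{thm:StrongIneq} does the opposite: it first applies Cuculescu to $(\EE_n^{[1]}\otimes\Id)(f)$ at \emph{all} exponential heights $e^\ell$, forms the disjoint projections $\pi_j^{[1]}$, and then builds the unbounded operator
\[
  \Pi \;=\; \sum_{j\ge 1} e^j\, j^{1+\varepsilon}\,\pi_j^{[1]},
\]
which plays the role of a noncommutative maximal function in the first variable (cf.\ Remark~\ref{rmk:LongRemark}\ref{itm:RemarkGuzman}). Only then is Cuculescu run in the second filtration, on $\Pi_\lambda$ rather than on $f$. No compatibility between a projection from one layer and the other filtration is ever invoked; the chaining $q(\lambda)\,\EE_m^{[2]}(\rho_\lambda f_n \rho_\lambda)\,q(\lambda) \le A_\varepsilon\, q(\lambda)\,\EE_m^{[2]}(\Pi_\lambda)\,q(\lambda)$ works because $\rho_\lambda f_n \rho_\lambda \le A_\varepsilon \Pi_\lambda$ is an \emph{operator inequality}, not a projection commutation.

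\textbf{Where the $\varepsilon$ actually enters.} In the paper the $\varepsilon$ is not the cost of an extrapolation or of forcing a projection into a small subalgebra, as you suggest. It comes from the elementary requirement that $\sum_{j\ge 1} j^{-(1+\varepsilon)} < \infty$, which is precisely what is needed to bound $\|\Pi_\lambda^{-1/2} f_n \Pi_\lambda^{-1/2}\|_\M$ uniformly in $n$. In the commutative case one could take the weight $e^j$ alone because $\sum_j e^j \pi_j^{[1]}$ is then comparable to the genuine maximal function; Remark~\ref{rmk:SteinLlogL} explains that this comparability fails for Cuculescu projections, and the extra factor $j^{1+\varepsilon}$ is the price paid to recover a usable substitute.

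A minor point: your claim $\|f_\sharp\|_{L\log^2 L} \lesssim \lambda\,\tau\{(f/\lambda)(1+\log_+(f/\lambda))^2\}$ needs care, since the Luxemburg norm is not homogeneous in this way; the paper sidesteps any such estimate by using Lemma~\ref{lem:NewCuculescu} directly inside the $\lambda$-layer decomposition.
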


We also expect that the $2$ in the exponent of the logarithm is optimal. The conjectured inequality above can be understood as a noncommutative formulation of the weak Orlicz type $(\Phi,\Phi)$, as defined in \cite{Guzman1972ProductBases}, where $\Phi(t) = t (1 + \log_+ t)^2$. Indeed, after fixing $\Phi_1$, $\Phi_2:\RR_+ \to \RR_+$, it is said that an operator $f \mapsto f_n = A_n(f)$ is of \emph{weak type} $(\Phi_1, \Phi_2)$ iff for every $f \in L_{\Phi_1}(\M)_+$ and $\lambda > 0$, there is a $q(\lambda) \in \Prj(\M)$ satisfying $q(\lambda) f_{n,m} q(\lambda) \le \lambda q(\lambda)$ and that
\[
  \tau \big( q(\lambda)^\perp \big)
  \, \lesssim \, \tau \circ \Phi_2 \Big( \frac{f}{\lambda} \Big).
\]
Although Conjecture \ref{cjn:StrongIneqC} remains open, we have been able to obtain an
$\varepsilon$-perturbation

\begin{Btheorem}
  \label{thm:StrongIneq}
  Given $\lambda > 0$ and $f \geq 0$, there exists $q(\lambda) \in \Prj(\M)$ satisfying
  \begin{enumerate}[leftmargin=1.2cm, ref={\rm (\roman*)}, label={\rm (\roman*)}]
    \item \label{itm:StrongIneq.1} $q(\lambda) f_{n,m} q(\lambda) \, \lesssim_{(\varepsilon)} \, \lambda \, q(\lambda)$.
    \item \label{itm:StrongIneq.2}
      $\displaystyle{
        \tau \big( \1 - q(\lambda) \big)
        \, \lesssim_{(\varepsilon)} \,
        \tau \left\{ \frac{f}{\lambda} \Big( 1 + \log_+ \Big( \frac{f}{\lambda} \Big) \Big) \Big( 1 + \log_+\Big(\frac{f}{\lambda^{1-\varepsilon}}\Big) \Big)^{1+\varepsilon} \right\}.
      }$
  \end{enumerate}
  %$$q(\lambda) f_{n,m} q(\lambda) \, \le_{(\varepsilon)} \, \lambda \, q(\lambda) \quad \mbox{and} \quad \tau \big( \1 - q(\lambda) \big) \, \lesssim_{(\varepsilon)} \, \tau \left\{ \frac{f}{\lambda} \Big( 1 + \log_+ \Big( \frac{f}{\lambda} \Big) \Big) \Big( 1 + \log_+\Big(\frac{f}{\lambda^{1-\varepsilon}}\Big) \Big)^{1+\varepsilon} \right\}.$$
%  \tau \bigg\{ \frac{f}{\lambda} \Big( 1 + \log_+ \Big( \frac{f}{\lambda} \Big) \Big)^2 \bigg\}.$$
\end{Btheorem}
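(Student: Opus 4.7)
The plan is to transport M. de Guzm\'an's iterated weak-$(1,1)$ argument for \eqref{eq:classicalCF} into the noncommutative setting, with Cuculescu's construction replacing the classical distributional truncation. Recall the commutative strategy: writing $M_S = M_1 M_2$ and using the sharp weak-$(1,1)$ bound $|\{M_1 g > \lambda\}| \lesssim \lambda^{-1} \int g\, \chi_{\{g > \lambda/C\}}$ with $g = M_2 f$, the distributional identity
\[
\int g\, \chi_{\{g > s\}} \, = \, s |\{g > s\}| \, + \, \int_s^\infty |\{g > t\}|\, dt
\]
combined with a second application of weak-$(1,1)$ to $M_2$ produces the $L\log L$ tail $\int (f/\lambda)(1 + \log_+(f/\lambda))$. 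My task is to mimic each ingredient at the operator level and to account for the fact that spectral projections of noncommuting operators do not intersect as freely as their commutative counterparts.

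The first main step is a \emph{refined one-parameter Cuculescu inequality}. Given $g \in L_1(\M)_+$, a martingale filtration $(\EE_m)_m$ and a level $\mu > 0$, I seek $q(g,\mu) \in \Prj(\M)$ with $q(g,\mu)\, \EE_m(g)\, q(g,\mu) \le \mu\, q(g,\mu)$ and, crucially,
\[
\tau(\1 - q(g,\mu)) \, \lesssim \, \frac{1}{\mu}\, \tau\bigl(g\, \chi_{(\mu/C, \infty)}(g)\bigr).
\]
This is achieved by splitting $g = g_1 + g_2$ at the spectral level $\mu/C$, running the usual recursive Cuculescu construction on $g_2$ (whose $L_1$-norm is the truncated tail), and observing that the $L_\infty$-bounded piece $g_1$ never triggers the recursion. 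This is the promised refinement of Cuculescu's construction; its commutative instance is precisely the sharp truncated weak-$(1,1)$ bound and, together with the iteration below, yields the shortest known proof of \eqref{eq:classicalCF}, as advertised in the abstract.

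Next, I apply the refined inequality twice, imitating de Guzm\'an's iteration. At the \emph{outer} level, set $G_m = \EE_m^{[2]} f$ and apply refined Cuculescu to the first filtration at level $\lambda$, obtaining $q^{[1]} \in \Prj(\M)$ with
\[
q^{[1]}\, \EE_n^{[1]}\EE_m^{[2]}(f)\, q^{[1]} \, \le \, \lambda\, q^{[1]}
\]
and $\tau(\1 - q^{[1]}) \lesssim \lambda^{-1}\, \tau(\mathcal{G} \cdot e_\lambda(\mathcal{G}))$, where $\mathcal{G}$ is the operator-valued majorant of $(G_m)_m$ provided by the Cuculescu recursion. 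At the \emph{inner} level, I expand $\tau(\mathcal{G}\cdot e_\lambda(\mathcal{G}))$ via the noncommutative distributional identity
\[
\tau\bigl(\mathcal{G}\, \chi_{(s,\infty)}(\mathcal{G})\bigr) \, = \, s\, \tau\bigl(\chi_{(s,\infty)}(\mathcal{G})\bigr) \, + \, \int_s^\infty \tau\bigl(\chi_{(t,\infty)}(\mathcal{G})\bigr)\, dt,
\]
and control the right-hand side by a second application of refined Cuculescu to $(\EE_m^{[2]})_m$, now at the deflated threshold $\lambda^{1-\varepsilon}$. Taking $q(\lambda)$ to be the infimum of the two projections (with $\EE_n^{[1]}$ passing through the second pinch by Fubini in the tensor case, and by freeness in the free-product case) gives (i), while the two iterated tail bounds combine to produce exactly the Orlicz integrand $\tfrac{f}{\lambda}(1 + \log_+(f/\lambda))(1 + \log_+(f/\lambda^{1-\varepsilon}))^{1+\varepsilon}$ in (ii).

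The main obstacle is the non-commutativity of the spectral projection $\chi_{(t,\infty)}(\EE_m^{[2]} f)$ with the first filtration. Classically, $\{M_2 f > t\}$ is a measurable set that can be intersected freely with anything; noncommutatively, this spectral projection has no reason to lie in a subalgebra adapted to $\EE_n^{[1]}$, and pinching it against $\EE_n^{[1]}$ produces a genuine distortion in the majorant $\mathcal{G}$. The $\varepsilon$-loss in the exponent of the second $\log_+$ and in the implicit constant of (i) is precisely the margin needed to absorb this distortion: running the inner refined Cuculescu at $\lambda^{1-\varepsilon}$ instead of $\lambda$ leaves enough room for the spectral tail to be controlled despite the noncommutation. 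Eliminating this loss and reaching Conjecture \ref{cjn:StrongIneqC} appears to require a genuinely new mechanism for comparing iterated Cuculescu projections across non-commuting filtrations.
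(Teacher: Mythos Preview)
Your refined one-parameter Cuculescu inequality is exactly the paper's Lemma~\ref{lem:NewCuculescu}, and your de~Guzm\'an--style iteration is the right conceptual framework. But the proposal has a genuine gap at its center: the object you call $\mathcal G$, ``the operator-valued majorant of $(G_m)_m$ provided by the Cuculescu recursion,'' does not exist in the form you need. Cuculescu's construction outputs projections, not a majorant, and Remark~\ref{rmk:SteinLlogL} shows precisely that one \emph{cannot} rebuild an $L_1$-majorant from those projections without losing a logarithmic factor. The outer refined Cuculescu bound you write, $\tau(\1-q^{[1]})\lesssim\lambda^{-1}\tau(\mathcal G\,e_\lambda(\mathcal G))$, therefore has no meaning until $\mathcal G$ is constructed explicitly, and that construction is the whole point of the proof.

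The paper supplies the missing object: one runs the \emph{first}-filtration Cuculescu at every dyadic level $e^\ell$, forms the disjoint layers $\pi_j^{[1]}$, and then \emph{defines} the surrogate maximal operator $\Pi=\sum_{j\ge 1} e^j j^{1+\varepsilon}\pi_j^{[1]}$. The extra weight $j^{1+\varepsilon}$ is not cosmetic: it is exactly what forces $\Pi_\lambda^{-1/2}f_n\Pi_\lambda^{-1/2}$ to be bounded by $\sum_j j^{-(1+\varepsilon)}<\infty$, so that $\rho_\lambda f_n\rho_\lambda\le A_\varepsilon\Pi_\lambda$ and part~(i) holds. The \emph{second}-filtration Cuculescu is then applied to $\Pi_\lambda$ (not to $f$), and $q(\lambda)$ is built from those projections alone---it is not an infimum of two independently constructed projections as you suggest. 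In particular, your diagnosis of the $\varepsilon$-loss is off: it does not arise from running the inner Cuculescu at a deflated threshold $\lambda^{1-\varepsilon}$, but from the summability weight $j^{1+\varepsilon}$ needed to make $\Pi$ dominate the first martingale uniformly. The appearance of $\lambda^{1-\varepsilon}$ in the final Orlicz integrand is a downstream algebraic consequence of estimating $\tau(\Pi_\lambda/\lambda)$ via the layer-cake formula and the refined Cuculescu lemma, not an input to the construction.
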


Theorem \ref{thm:StrongIneq} generalizes to any other filtrations satisfying $\EE_n^{[1]} \circ \EE_m^{[2]} = \EE_m^{[2]} \circ \EE_n^{[1]}$. Let us recall that Hong/Sun's inequality \eqref{HongThm} holds as well for commuting martingale filtrations. Their proof combines Cuculescu's theorem for $1$-parameter martingales with a well-known decomposition of $L \log^\alpha L$ into atoms. Let us note that \eqref{HongThm} gives 
\[
 \lambda \, \tau \big( \1 - q(\lambda) \big)
  \, \lesssim \,
 \| f \|_{L \log^2 L}.
\]
Their result seems a priori stronger, since it holds for every $f \in L \log^2 L(\M)$, while our result works only for $f$ in the smaller space $L \log^{2 + \varepsilon} L(\M)$. However, our inequality gives a faster rate of decay for $\lambda \mapsto \tau(\1 - q(\lambda))$. Indeed, the proof of \eqref{HongThm} requires going through $L_1[\ell_\infty]$, which forces to loose some factor with respect to the optimal size of the projections $q(\lambda)$. On the contrary, our proof uses a subtle refinement of Cuculescu's construction together with a smart choice for $q(\lambda)$ in terms of the two martingale filtrations. More details on the relation between both results will be given in Remark \ref{rmk:LongRemark}. To understand the importance of that difference, recall that Conjecture \ref{cjn:StrongIneqC} and Theorem \ref{thm:StrongIneq} give weak $(\Phi, \Phi)$-type inequalities, while \eqref{HongThm} is limited to a weak type $(\Phi,1)$ inequality which has mixed homogeneity. Having the correct weak Orlicz type may be of importance in order to apply real interpolation between Orlicz spaces. On the other hand, beyond \eqref{HongThm} we suspect that Theorem \ref{thm:StrongIneq} holds true for multiparametric ergodic means, but we have not investigated this topic. We also suspect that $L \log^2 L(\M)$ is optimal in terms of b.a.u. convergence and weak Orlicz type.

\begin{Bconjecture}
  \label{prb:OptimalOrlicz}
  Let $f_{n,m} = \EE_n^{[1]} \otimes \EE_m^{[2]}(f)$ be as above$\hskip1pt :$
  \begin{enumerate}[leftmargin=1.2cm, label={\bf \arabic*.}, ref={\bf \arabic*}]
    \item \label{itm:OptimalOrlicz.1}
    If $f \mapsto (f_{n,m})_{n,m}$ is of weak type $(\Phi,\Phi)$,
    then $\Phi(t) \geq c \, t (1 + \log_+ t)^2$
    for large $t$.
    \item \label{itm:OptimalOrlicz.2} 
    If $f_{n,m} \to f$ b.a.u. for every $f \in L_\Phi(\M)$,
    then $\Phi(t) \geq c \, t (1 + \log_+ t)^2$ for large $t$.
  \end{enumerate}
\end{Bconjecture}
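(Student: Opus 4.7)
The plan is to treat parts (1) and (2) in tandem: first reduce b.a.u.\ convergence to a weak maximal inequality, then exhibit a family of extremizers. For the reduction, assume that $f_{n,m}\to f$ b.a.u.\ for every $f \in L_\Phi(\M)$. A Nikishin-type principle in the tracial setting (cf.\ \cite{Lance1976Ergodic,JunXu2007}) then produces, for each $f$ and each $\lambda>0$, a projection $q(\lambda)$ with $q(\lambda) f_{n,m} q(\lambda)\le \lambda q(\lambda)$ and $\tau(\1-q(\lambda)) \le C \, \Phi(\|f\|_\Phi / \lambda)$; hence (2) follows from (1) applied to the resulting gauge. So the core task is to prove the lower bound on $\Phi$ under the assumption of a weak $(\Phi,\Phi)$ inequality.

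To prove (1), I would work in the product model $\M = L_\infty([0,1]) \weaktensor R$, where $R$ is the hyperfinite II$_1$ factor, with the dyadic filtration $(\M_n^{[1]})$ on $L_\infty([0,1])$ and the standard matrix filtration $M_{2^m} \into M_{2^{m+1}}$ on $R$ as $(\M_m^{[2]})$. For each large $N$, I would build a positive test element of the form
\[
 f_N \, = \, \sum_{k=1}^N 2^{k}\, \chi_{I_k} \otimes e_k,
\]
where $I_k \subset [0,1]$ are thin dyadic sets of measure $\sim 2^{-k}/k$ coming from a Saks-type commutative construction (responsible for one logarithm), and the mutually orthogonal projections $e_k \in R$ of trace $\sim 2^{-k}/k$ are taken from a Junge--Xu/Hu extremizer (responsible for the second logarithm). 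A direct calculation should give $\|f_N\|_\Phi$ controlled by a quantity involving $\Phi$ evaluated at essentially $2^k$, while the biparametric maximal $\sup_{n,m}(\EE_n^{[1]}\otimes\EE_m^{[2]})f_N$ exceeds a fixed threshold on a joint projection of trace $\gtrsim 1$. Plugging this into the assumed weak $(\Phi,\Phi)$ inequality and letting $N \to \infty$ forces $\Phi(t) \gtrsim t(1+\log_+ t)^2$.

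The main obstacle is coupling the two pathologies so that they combine multiplicatively rather than merely additively. Because the maximal is taken over all $(n,m)$ simultaneously, one must arrange the commutative sets $I_k$ and the noncommutative projections $e_k$ so that $\1 - q(\lambda)$ genuinely dominates, up to constants, a \emph{product} of the classical Saks bad set with the Junge--Xu bad projection. Some cross-cancellation is conceivable since one direction is operator-valued, and ruling it out seems to require either a careful atomic decomposition aligned with both filtrations (analogous to the one used in the proof of Theorem~B but run in reverse), or a dualization through $L^\loc_1(\M;\ell_\infty/c_0)$ to a cleaner factorization statement. A secondary difficulty is that the second logarithm is rooted in the sharp $(p-1)^{-2}$ blow-up of noncommutative $L_p[\ell_\infty]$ norms, so the $e_k$ must be chosen to exhibit precisely this operator-theoretic obstruction and not a classical Rademacher surrogate; without this, one only recovers the classical $t(1+\log_+ t)$ lower bound coming from Saks, which is strictly weaker than the conjectured exponent.
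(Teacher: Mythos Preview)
The statement you are attempting to prove is not a theorem in the paper but an open \emph{conjecture} (labelled as a \texttt{Bconjecture}). The paper offers no proof; it only remarks that part~\ref{itm:OptimalOrlicz.2} has a classical precedent in \cite[Theorem~8/Lemma~F]{Jessen1935} for the Abelian case, where the optimal exponent is $1$ rather than $2$. So there is nothing to compare your argument against.

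That said, your proposal is a plan rather than a proof, and you have correctly identified the two genuine gaps yourself. First, the Nikishin-type reduction from b.a.u.\ convergence to a weak $(\Phi,\Phi)$ inequality is not available off the shelf in the tracial noncommutative setting; the references you cite establish weak-type bounds as \emph{inputs} to a.u.\ convergence, not the converse, and a noncommutative Stein/Nikishin principle of the required strength would itself be a new result. Second, and more seriously, the multiplicative coupling of the Saks construction with the Junge--Xu extremizer is exactly the heart of the matter: the Junge--Xu lower bound for the $(p-1)^{-2}$ blow-up uses a specific off-diagonal matrix structure that does not obviously tensorize with a commutative bad set in the way you need. Without a mechanism forcing the biparametric Cuculescu projection $\1 - q(\lambda)$ to dominate a genuine product of the two one-parameter bad projections, the argument collapses to the classical $t(1+\log_+ t)$ bound, as you note. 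These are precisely the obstructions that leave the statement conjectural in the paper.
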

The point \ref{itm:OptimalOrlicz.2} has a clear precedent in \cite[Theorem 8/Lemma F]{Jessen1935} in the Abelian case.

\vskip-20pt

\null

% Applications to the free group
\subsubsection*{{\bf C. Applications to the free group}}
\label{C}
One of our original motivations to study multiparametric bilateral almost uniform convergence comes from a problem in the free group algebra. Let $\FF_2 = \langle a, b \rangle$ be the free group on two generators and let $\omega \mapsto |\omega|$ be its standard word length. It is a very well known observation due to Haagerup \cite{Haa1979} that the function $\omega \mapsto e^{-t |\omega|}$ is positive definite
and therefore the \emph{free Poisson semigroup} $S_t: \L \FF_2 \to L \FF_2$ given by
\[
  S_t \Big( \sum_{\omega \in \FF_2} a_\omega \, \lambda_\omega \Big)
  = 
  \sum_{\omega \in \FF_2} e^{-t |\omega|} \, a_w \, \lambda_\omega
\]
is a symmetric Markovian semigroup. The following natural problem remains open:

\begin{Cproblem}
  \label{prb:ConvergenceFreeIntro}
  \
  \begin{enumerate}[leftmargin=1.2cm, label={\bf \arabic*.}, ref={\bf \arabic*}]
    \item \label{itm:ConvergenceFreeIntro.1}
    Does $S_t(f) \to f$ bilaterally almost uniformly for every $f \in L_1(\L \FF_2)$ as $t \to 0$\emph{?}
    \item \label{itm:ConvergenceFreeIntro.2}
    If not, what is the largest subclass of $L_1(\L \FF_2)$ over which $S_t(f) \to f$ b.a.u. as $t \to 0$\emph{?}
  \end{enumerate}
\end{Cproblem}

Currently, the largest known class is $L \log^2 L(\L \FF_2)$ and it was obtained in \cite{Hu2009}. It is far from clear that the answer to the first question above is positive. In fact, indirectly related work by Ornstein and Tao \cite{Ornstein1968Pointwise,Tao2015failure} indicates that b.a.u. convergence might fail in the whole $L_1(\L \FF_2)$. Our methods in this paper will provide a significantly larger subspace of $L_1(\L \FF_2)$ where bilateral almost uniform convergence holds. Recall that, after identifying $\L \FF_2$ with $\L (\ZZ \ast \ZZ) \cong L_\infty(\TT) \ast L_\infty(\TT)$, the free Poisson semigroup factorizes as the free product $S_t = P_t \ast P_t$ where $P_t$ is the classical Poisson semigroup in $\TT$. Theorem \ref{thm:MainTool} easily gives 
\[
  \baulim_{s, t \to 0^+} \, (P_s \ast P_t)f = f \quad \mbox{ for every } f \in L \log^2 L(\L \FF_2).
\]
It is tempting to use that both free components of $\L \FF_2$ are Abelian |so that there is almost uniform convergence for $f \in L \log L(\TT)$| to bootstrap the result above, proving b.a.u. convergence for every $f \in L \log L(\L \FF_2)$. This does not work as intended, but we still obtain an improvement. Let $\Sigma \subset \FF_2$ be the subset of reduced words $s_1^{n_1} \, s_2^{n_2} \, ... \, s_r^{n_r}$ with no sign changes in the exponents of the generators $a$ and $b$. The usefulness of $\Sigma$ is that the group homomorphism $\FF_2 \to \ZZ \times \ZZ$ given by removing the $a$ and $b$ respectively \emph{detects} the length $|\omega|$ if $\omega \in \Sigma$. This facilitates the application of transference techniques for elements $f \in \L \FF_2$ supported in $\Sigma$. Let us define 
\[
  L_1(\L \FF_2){|}_{\Sigma} = \Big\{ f \in L_1(\L \FF_2) : \tau( f \, \lambda_\omega^\ast ) = 0, \forall \omega \not\in \Sigma \Big\}.
\]
\begin{Ctheorem}
  \label{thm:FreegroupExt}
  Consider the space
  \[
    \C = L_1(\L \FF_2){|}_{\Sigma} + L \log^2 L(\L \FF_2),
  \]
  \[
    \| f \|_\C =
    \inf \Big\{ \| g \|_1 + \| h \|_{L \log^2 L}
                : f = g + h \mbox{ and } g \in L_1(\L \FF_2){|}_\Sigma \Big\}.
  \]
  Then, if $t_n \to 0^+$, we have that $(S_{t_n})_{n}$ satisfies
  \begin{equation}
    \label{eq:FreegroupExt}
    \Big\| {\limsup_{n \to \infty}}^+ S_{t_n}(f) \Big\|_1 \, \lesssim \, \| f \|_\C.
  \end{equation}
  In particular, $S_t(f) \to f$ bilaterally almost uniformly as $t \to 0^+$ for every element $f \in \C$. 
\end{Ctheorem}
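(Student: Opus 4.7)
The plan is to split the estimate over the two summands of $\C$ and treat each by a different mechanism. By subadditivity of the $\limsup^+$ seminorm and the fact that $\|\cdot\|_\C$ is defined as an infimum over decompositions $f = g + h$ with $g\in L_1(\L\FF_2)|_\Sigma$ and $h\in L\log^2 L(\L\FF_2)$, it suffices to establish $\|{\limsup_{n\to\infty}}^{+} S_{t_n}(g)\|_1 \lesssim \|g\|_1$ for every $g\in L_1(\L\FF_2)|_\Sigma$ and $\|{\limsup_{n\to\infty}}^{+} S_{t_n}(h)\|_1\lesssim \|h\|_{L\log^2 L}$ for every $h\in L\log^2 L(\L\FF_2)$. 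The b.a.u.\ convergence in the statement will then follow from the $\|\limsup^+\|<\infty \Rightarrow \baulim$ direction recalled in the paper.

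For the $L\log^2 L$ summand I would exploit the factorization $S_t = (P_t\ast\mathrm{id})\circ(\mathrm{id}\ast P_t)$ valid in $\L\FF_2 = L_\infty(\TT)\ast L_\infty(\TT)$ (easily checked on reduced words), noting that its two factors are commuting symmetric Markovian semigroups on $\L\FF_2$. By Junge--Xu's noncommutative Dunford--Schwartz theorem each factor is b.a.u.\ convergent on $L_1(\L\FF_2)$, and by the Yano-type extrapolation of \cite{Hu2009} each satisfies the maximal bound $L\log^2 L \to L_1[\ell_\infty]$. Applying Theorem \ref{thm:MainTool} with $A_n = P_{t_n}\ast\mathrm{id}$ and $B_m = \mathrm{id}\ast P_{t_m}$ produces the two-parameter $\limsup^+$ bound, and restricting to the diagonal $n=m$ (the diagonal $\limsup^+$ is dominated by the two-parameter one) recovers $S_{t_n}$ with the desired $L\log^2 L$-estimate.

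The $L_1|_\Sigma$ summand is the substantive new content and proceeds via transference. The key algebraic fact is that for $\omega\in\Sigma$ the abelianization $\pi:\FF_2\to\ZZ^2$ is length-preserving, $|\omega|_{\FF_2} = |\pi(\omega)_1|+|\pi(\omega)_2|$, because elements of $\Sigma$ admit no cancellation under $\pi$; hence $S_t$ and the single-parameter tensor Poisson semigroup $P_t\otimes P_t$ on $L_\infty(\TT^2)\cong\L(\ZZ^2)$ carry the same Fourier symbol on $\Sigma$. Encode this via the trace-preserving, normal, unital $\ast$-homomorphism $\Psi:\L\FF_2\to\N:=\L\FF_2 \weaktensor L_\infty(\TT^2)$ defined by $\Psi(\lambda_\omega) = \lambda_\omega\otimes\lambda_{\pi(\omega)}$ and the Markovian semigroup $\widetilde S_t:=\mathrm{id}\otimes(P_t\otimes P_t)$ on $\N$; a direct check on Fourier modes shows $\widetilde S_t\circ\Psi = \Psi\circ S_t$ on $L_1(\L\FF_2)|_\Sigma$. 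Since $\widetilde S_t$ acts nontrivially only on the commutative tensor factor through a single-parameter Markovian semigroup with scalar weak-$(1,1)$ maximal, a Fubini-type lift to $L_1(\N) \cong L_1(\TT^2; L_1(\L\FF_2))$ yields a noncommutative weak-$(1,1)$ maximal for $\widetilde S_t$, from which b.a.u.\ convergence $\widetilde S_{t_n}(\widetilde g)\to\widetilde g$ in $\N$ follows for every $\widetilde g\in L_1(\N)$.

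The main technical obstacle is transferring this b.a.u.\ convergence back from $\N$ to $\L\FF_2$, since pulling projections through $\Psi$ gives an inequality in the wrong direction. The way around is the gauge equivariance: the dual $\TT^2$-action $\alpha_\theta(\lambda_\omega) = e^{i\langle\pi(\omega),\theta\rangle}\lambda_\omega$ on $\L\FF_2$ intertwines via $\Psi$ with the translation $\sigma_\theta$ on the $L_\infty(\TT^2)$ factor, and both $S_t$ and $\widetilde S_t$ commute with these actions. Identifying $\N\cong L_\infty(\TT^2;\L\FF_2)$ with $\Psi(g)(\theta) = \alpha_\theta(g)$ and $\widetilde\tau(\cdot) = \int_{\TT^2}\tau(\cdot(\theta))\,d\theta$, a b.a.u.-witnessing projection $\widetilde e\in\Prj(\N)$ becomes a measurable family $\theta\mapsto\widetilde e(\theta)\in\Prj(\L\FF_2)$ with $\int\tau(\widetilde e(\theta))\,d\theta<\varepsilon$, and the essential-supremum structure of $L_\infty(\N)$ together with the isometry of $\alpha_\theta$ rewrites the b.a.u.\ condition fiber-wise as $\|\alpha_\theta^{-1}(\widetilde e(\theta))^\perp (S_{t_n}g - g)\alpha_\theta^{-1}(\widetilde e(\theta))^\perp\|_\infty \to 0$ uniformly for a.e.\ $\theta$. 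Choosing by Chebyshev a $\theta_0$ in a set of Lebesgue measure at least $1-\varepsilon^{1/2}$ where $\tau(\widetilde e(\theta_0)) < \varepsilon^{1/2}$, the projection $p:=\alpha_{\theta_0}^{-1}(\widetilde e(\theta_0))\in\Prj(\L\FF_2)$ witnesses b.a.u.\ convergence of $S_{t_n}(g)\to g$ in $\L\FF_2$, completing the $L_1|_\Sigma$ estimate and hence the theorem.
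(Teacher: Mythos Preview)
Your argument is correct and follows essentially the same strategy as the paper: the Fell-absorption embedding $\Psi$ (the paper's $\Theta$), b.a.u.\ convergence of the amplified semigroup $\Id\otimes P_t$ on $\N=\L\FF_2\weaktensor L_\infty(\TT^2)$, and a Chebyshev-type selection of a good fiber $\theta_0$ to pull back to $\L\FF_2$. The only organizational difference is that you transfer b.a.u.\ convergence directly and then invoke Proposition~\ref{prp:BAUImpLimsup}, whereas the paper packages the Chebyshev step as a quasi-isometry of $\Theta$ on the $\limsup^+$ spaces (Proposition~\ref{prp:IsometricTheta}) and then applies it; the underlying computation is the same.

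Two small imprecisions are worth correcting. First, the b.a.u.\ convergence on $L_1$ for $P_{t}\ast\Id$ and for $\Id\otimes P_t$ on $\N$ does not come from the noncommutative Dunford--Schwartz theorem (which concerns ergodic averages $M_s(\S)$, not the semigroup itself); it comes from the subordination argument of Proposition~\ref{prp:subbordinationBAU}, since $P_t$ on $\TT$ is the Poisson subordinate of the heat semigroup and this structure survives taking $\ast\,\Id$ or $\Id\otimes(\cdot)$. Second, your treatment of the $L\log^2L$ summand via Theorem~\ref{thm:MainTool} is valid but unnecessary: $S_t$ is already a single symmetric Markovian semigroup, so Hu's extrapolation gives $(S_{t_n})_n:L\log^2L\to L_1[\ell_\infty]$ directly, and the $\limsup^+$ bound follows immediately without passing through the biparametric inequality.
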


Theorem \ref{thm:FreegroupExt} is related to a few recent results in the literature \cite{JunMeiPar2014Riesz, MeiRicard2017FreeHilb, MeiXu2019Free} which have established $L_p$-boundedness of Fourier multipliers over special subsets of $\FF_2$. In \cite{JunMeiPar2014Riesz} it was proved that H\"ormander-Mikhlin multiplier theorem and Littlewood-Paley square function inequalities admit a generalization when we restrict to arbitrary branches of the free group algebra. Next, these square function estimates were greatly improved in \cite{MeiRicard2017FreeHilb} after proving the $L_p$-boundedness of the truncation into branches with new Hilbert transform methods. Finally, in \cite{MeiXu2019Free} the authors proved that certain specific multipliers are $L_p$-bounded iff the restriction of the Fourier multiplier to the torus $\TT$ |the branch containing the integer powers of one generator| is $L_p$-bounded. The price for the equivalence with the classical theory is that the admissible multipliers are constant in large regions of $\FF_2$ and does not include natural operators like the free Poisson semigroup. In fact,  these multipliers are determined by its value in a linearly growing subset of $\FF_2$. In conclusion, in these previous results, the reference subsets over which the theory was built were relatively simple in terms of their geometry, like branches or other linear growth subsets. In that respect, it is worth noting that our reference set $\Sigma$ grows exponentially with respect to word length
\[
  \big| \Sigma \cap \big\{ \omega \in \FF_2 : |\omega| \leq n \big\} \big| \sim 2^n.
\]

\section{\bf Preliminaries \label{sct:preliminaries}}

In this section we will briefly review basic definitions and results that will be
used throughout the article. 

% von neumann algebras and traces, finite ans semifinite
% measurable operators and lp spaces, orlicz spaces
\subsection*{Noncommutative integration}
We denote by $\B(H)$ the $\ast$-algebra of bounded operators on a Hilbert space $H$. A \emph{von Neumann algebra} is a $\ast$-subalgebra $\M \subset \B(H)$ that is closed in the weak-$\ast$ topology of $\B(H)$. Naturally, von Neumann algebras are dual algebras that come equipped with a weak-$\ast$ topology inherited from that of $\B(H)$. The interested reader can look more on the basic theory of von Neumann algebras in the many texts available, such as \cite{TaI, TaII}. 
A a contractive and positivity-preserving linear map in $\M^\ast$ is called a state. We will say that it is \emph{normal} when it is weak-$\ast$ continuous. A \emph{finite trace} or simply a \emph{trace} will be a normal state $\tau: \M \to \CC$ satisfying $\tau(f g) = \tau(g f)$ for every $f, g \in \M$. We will say that such a state is \emph{faithful} whenever $\tau(f) = 0$ implies that $f = 0$ for $f \in \M_+ = \{ g \in \M: g \geq 0\}$. A von Neumann algebra admitting a faithful finite state is said to be \emph{finite}. Throughout this text we will work in the context of finite von Neumann algebras. 

Many measure-theoretical notions extend naturally to the context of finite von Neumann algebras. We will statt with the construction of noncommutative $L_p$-spaces and Orlicz spaces, which can be consulted in detail in \cite{PiXu2003, Terp1981lp, Kunze1990Orlicz}. Recall that the $p$-norm is given by
\[
  \| f \|_p = \tau(|f|^p)^\frac1{p}
\]
We can define $L_p(\M,\tau)$ as the metric closure of $\M$ with respect to the metric induced by the $p$-norm above. There is an alternative characterization of the noncommutative $L_p$-spaces. For that it is needed to construct the space of $\tau$-measurable unbounded operators $L_0(\M,\tau)$, a locally convex space analogous to the classical space of measurable operators with the topology of convergence in measure. Then, the $L_p$-spaces can be described as the subset of the $\tau$-measurable operators satisfying that $\tau(|f|^p) < \infty$, see \cite{Terp1981lp}.
%Let us start recalling that an unbounded operator $f: \dom(f) \subset H \to H$ is said to be affiliated with $\M$ iff it commutes with every unitary $u$ in $\M'$, the commutant of $\M$. A densely defined and closable operator $f$ affiliated with $\M$ is $\tau$-measurable iff for every $\delta > 0$ there is an orthogonal projection $p \in \Prj(\M)$ such that $\tau(p^\perp) < \delta$ and $p H \subset \dom(f)$. We will denote the space of all $\tau$-measurable operators by $L_0(\M,\tau)$. $L_0(\M,\tau)$ can be endowed with a locally convex topology, analogous to the topology of convergence in measure in the classical case, that turns $L_0(\M,\tau)$ into a topological $\ast$-algebra, see \cite{Terp1981lp} for the details. The trace $\tau$ can be extended to the whole $L_0(\M,\tau)_+$. Therefore, given any exponent $1 \leq p < \infty$, we can define the \emph{noncommutative $L_p$-space} associated with $(\M,\tau)$ by
%\[
%  L_p(\M, \tau) = \big\{ f \in L_0(\M,\tau) : \tau(|f|^p) < \infty \big\}.
%\]
%$L_p$ spaces are complete with respect to the norm given by $\| f \|_p = \tau(|f|^p)^\frac1{p}$ and thus, they can alternatively be seen as metric closure of $\M$ with respect to that norm. We will use the convention $L_\infty(\M) = \M$ and denote the operator norm of $\M$ by $\| \cdot \|_\infty$.
In the cases $p = 1$ and $p = 2$, $L_p(\M, \tau)$ can be easily characterized as the predual of $\M$ ---denoted by $\M_\ast$--- and the GNS construction associated to $\tau$, respectively. 

We will also work throughout this text with \emph{noncommutative Orlicz spaces}. Let $\Phi: \RR_+ \to \RR_+$ be a convex increasing function with $\Phi(0) = 0$ and $\Phi(\infty) = \lim_{t \to \infty} \Phi(t) = \infty$. Given such function, we define $L_\Phi(\M,\tau)$ as
\[
  L_\Phi(\M, \tau) = \Big\{ f \in L_0(\M,\tau) : \exists \lambda \geq 0, \,  \tau \circ \Phi \Big( \frac{|f|}{\lambda} \Big) < \infty \Big\}.
\]
Those are Banach spaces when endowed with the norm given by
\[
  \| f \|_{L_\Phi} = \inf \Big\{ \lambda \geq 0 : \tau \circ \Phi \Big( \frac{|f|}{\lambda} \Big)  \leq 1 \Big\}.
\]
We will omit the dependency on $\tau$ whenever it is clear from the context. Among all Orlicz spaces we will use mainly those where $\Phi$ is of the form $\Phi(t) = t \, (1 + \log_{+} t)^\alpha$, where $\log_{+}(t) = \max \{0, \log(t) \}$. In that case we will denote the space by $L \log^\alpha L(\M,\tau)$.

One of the most common examples of finite von Neumann algebras is given by $\B(\ell_2^m)$ with its unique trace $\Tr$. In that case, the associated $L_p$-spaces are known as \emph{Schatten classes} $S_p^m$. 

Throughout this text we will make brief use of the theory of operator spaces, specially around the discussion before Problem \ref{prb:CBMaximal}. The category of operator spaces is that which is obtained when one takes the categor y of Banach spaces and replaces bounded maps by \emph{completely bounded maps} i.e. maps whose operator norms are stable by matrix amplifications. The interested reader can look in some of the already available texts \cite{Pi2003, EffRu2000Book} as well as the beginning of \cite{BleMer2004Operator}. One key observation about operator spaces that we will use is that the complete norm of a map can be detected by operator space valued Schatten classes $S_p[E]$, see \cite{Pi1998} for the construction. Indeed, let $\phi: E \to F$ be a completely bounded map, for every $1 \leq p \leq \infty$, we have that
\begin{equation}
  \label{eq:CBSchattenNorm}
  \| \phi: E \to F \|_\cb = \sup_{m \geq 1} \Big\{ \big\| \Id \otimes \phi: S_p^m[E] \to S_p^m[E] \big\| \Big\}.
\end{equation}
One consequence of the identity above is that the operator space structure of an operator space $E$ is completely defined by the family of norms of $S_p^m[E]$. We will use this often since for $L_p(\M)$, we have that $S_p^m[L_p(\M)] = L_p(M_m(\CC) \otimes \M)$. Other spaces related to $L_p$ admit an equally explicit description. 

\subsection*{Noncommutative maximal inequalities}
The theory of maximal inequalities over noncommutative $L_p$-spaces is not without subtleties. The main one being that, given a family of positive non commuting operators $(f_n)_n$, there is no natural way of defining its supremum in a way that commutes with taking vector states, see \cite[p. 386]{JunXu2007}. Nevertheless, it is possible to define a quantity that behaves like the $L_p$ norm of the supremum of such a family by working with the mixed norm spaces $L_p(\M;\ell_\infty)$. In \cite{Pi1998}, Pisier introduced a way of defining spaces of the form $L_p(\M;E)$ for hyperfinite $\M$, generalizing the Banach-valued $L_p$ spaces over a measure space. His method involves taking a direct limit of the $E$-valued Schatten classes $S_p^m[E]$ introduced in the previous paragraph. Later, this notion was extended to the setting of QWEP von Neumann algebras by Junge \cite{Junge2004Fubini}. In the case of $E = \ell_\infty$ an ad hoc definition requiring a factorization was introduced in \cite{Jun2002Doob}, see also \cite{JunXu2007}. In this case one does not need any approximation properties of the von Neumann algebra. This is the case that we will review here. Fix $1 \leq p  \leq \infty$ and a semifinite von Neumann algebra $\M$ with a n.s.f. trace $\tau$.

\begin{definition}
  \label{def:MixedL1Linfty}
Given a bounded sequence $(f_n)_n \subset L_p(\M)$ it is said that $(f_n)_n \in L_p(M;\ell_\infty)$ iff there exists a factorization $f_n = \alpha \, g_n \, \beta$ such that $\alpha, \beta \in L_{2p}(\M)$ and $g_n \in \ell_\infty \weaktensor \M = \ell_\infty[\M]$. The norm in $L_p(\M;\ell_\infty)$ is given by
  \begin{equation}
    \big\| (f_n)_n \big\|_{L_p[\ell_\infty]}
      := \inf \Big\{ \| \alpha \|_{2 p} \, \sup_{n} \| h_n \|_\infty \, \| \beta \|_{2 p} \Big\},
  \end{equation}
  where the infimum is taken over all possible decompositions $f_n = \alpha \, h_n \, \beta$. 
\end{definition}

We define $L_p(\M;c_0)$ as the norm-closure of the finitely supported sequences $(f_n)_{0 \leq n \leq N}$ inside $L_p(\M;\ell_\infty)$. As a shorthand notation we will write 
\[
  L_p(\M;\ell_\infty) = L_{2 p}(\M) \, (\ell_\infty \weaktensor \M) \, L_{2 p}(\M).
\]
The same convention will be employed whenever a norm is given by the infimum over products. Note that it is easy to replace $\ell_\infty$ by a general von Neumann algebra $\N$ in the above expression to get $L_p(\M;\N) = L_{2p}(\M) \, (\M \weaktensor \N) \, L_{2p}(\M)$. One interesting case is $\N = L_\infty(\RR_+)$. This is the space that we will use to formulate maximal inequalities with respect to a continuous parameter. For purely conventional reasons we will often denote 
\[
  \Big\| \, {\sup_{n}}^{+} f_n \Big\|_p := \| (f_n)_n \|_{L_p[\ell_\infty]}.
\]
Here $\sup^{+}$ is just an evocative notation, since we already said that the noncommutative supremum may not be well defined. We will frequently use the following well known characterization of the $L_p[\ell_\infty]$-norm for $f_n \geq 0$:
\begin{equation}
  \label{eq:L1Linfty}
  \| (f_n)_n \|_{L_p[\ell_\infty]} = \inf_{f_n \leq g} \| g \|_p.
\end{equation}

It is possible to characterize the natural operator space structure of $L_p(\M;\ell_\infty)$ as the only one satisfying that 
\begin{equation}
  \label{eq:SchattenMaximal}
  S_p^m \big[ L_p(\M;\ell_\infty) \big]
  = L_p(M_m \otimes \M; \ell_\infty),
\end{equation}
see \cite{JunXu2007}.
Another property that will be useful is the fact that $L_p(\M;\ell_\infty)$ is a bimodule with respect to $\M$, i.e.:
\begin{equation}
  \label{eq:bimoduleProp}
  \big\| a \cdot (f_n)_n \cdot b \big\|_{L_p[\ell_\infty]}
  \leq 
  \| a \|_{\M} \, \big\| (f_n) \big\|_{L_p[\ell_\infty]} \, \| b \|_\M.
\end{equation}
It also holds that $L_p(\M;\ell_\infty)$ is a bimodule with respect to the central action of $\ell_\infty$ acting as pointwise multiplicators, i.e:
$\| (\eta_n \cdot f_n)_n \big\|_{L_p[\ell_\infty]} \leq \| (\eta_n)_n \|_{\ell_\infty} \, \| (f_n) \|_{L_p[\ell_\infty]}$. Taking $\eta = \1_{[0,N]}$
gives the following monotonicity property with respect to the index set, which will be useful in the next section
\begin{equation}
  \label{eq:monotonicLpLinfty}
  \big\| (f_n)_{N > n} \big\|_{L_p[\ell_\infty]} \leq \big\| (f_n)_n \big\|_{L_p[\ell_\infty]}.
\end{equation}

One may also define $\ell_\infty$-valued Orlicz spaces $L_\Phi(\M;\ell_\infty)$. Following the notation of \cite[Definition 3.2]{Bekjan2017}, we will use the following notation
\[
  \tau \circ \Phi \Big( {\sup_{n}}^+ f_n \Big)
  = \inf \bigg\{ \frac{\tau \circ \Phi(|\alpha|^2) + \tau \circ \Phi(|\beta|^2)}{2} \, \Big( \sup_n \| g_n \|_\infty \Big)
                : f_n = \alpha \, g_n \, \beta
         \bigg\}.
\]
Then, the norm of $L_\Phi[\ell_\infty]$ is given by
\[
  \big\| (f_n)_n \big\|_{L_\Phi[\ell_\infty]}
  =
  \inf \left\{ \lambda \geq 0 :
         \tau \circ \Phi \Big( \frac1{\lambda} \, {\sup_{n}}^+ f_n \Big) \leq 1 
       \right\}.
\]
In this context the identity \eqref{eq:L1Linfty} holds up to constants: if $f_n \geq 0$ then we have the following, see \cite{Bekjan2017}
\begin{equation}
  \label{eq:AltnormOrlicz}
  \big\| (f_n)_n \big\|_{L_\Phi[\ell_\infty]}
  \, \sim \, \inf \big\{ \| g \|_{L_\Phi} : f_n \leq g \big\},
\end{equation}

% almost uniform convergence
\subsection*{Almost uniform convergence\label{ssc:bau}}
Next we recall the definitions of almost uniform convergence in semifinite von Neumann algebras. The following one goes back at least to \cite{Lance1976Ergodic}, see also \cite{Cuculescu1971} as well as \cite{Yeadon1977, Yeadon1980II, Jajte1985Prob, Jajte1991L2}. 
\begin{definition}
  \label{def:bauConvergence}
 Given $(f_n)_n \subset L_0(\M,\tau)$, we will say that $f_n \to f$ \emph{bilaterally almost uniformly},
  or \emph{b.a.u.} in short, iff for every $\epsilon > 0$, there is a projection $e \in \Prj(\M)$ such that
  \begin{enumerate}[leftmargin=1.2cm, label={\rm (\roman*)}, ref={\rm (\roman*)}]
    \item \label{itm:bauConvergence.1}
    $\tau(e) < \epsilon$.
    \item \label{itm:bauConvergence.2}
    $\displaystyle{ \big\| e^\perp \, (f_n - f) \, e^\perp \big\|_\infty \longrightarrow 0}$ as $n \to \infty$.
  \end{enumerate}
\end{definition}
The use of the term ``bilateral'' in the definition above refers to the fact that the projection $e^\perp$ acts both on the right and on the left. One can similarly define the following stronger notions of almost uniform convergence
\[
  \begin{array}{lcl}
    f_n \to f \mbox{ column almost uniformly }
    & \Longleftrightarrow & \displaystyle{ \big\| (f_n - f) \, e^\perp \big\|_\infty \to 0}\\
    f_n \to f \mbox{ row almost uniformly } 
    & \Longleftrightarrow & \displaystyle{ \big\| e^\perp \, (f_n - f) \big\|_\infty \to 0}
  \end{array}
\]
It is also useful to work with combinations of these modes of convergence. For example, we say that $f_n \to f$ converges column$+$row almost uniformly iff there is a decomposition $f_n - f = a_n + b_n$ such that $a_n$ and $b_n$ converge column and row almost uniformly respectively, see \cite{Defant2011}. The terminology or \emph{row} and \emph{column} comes from the fact that
\[
  \begin{array}{lcl}
    f_n \to f \mbox{ column almost uniformly}
    & \quad \Longleftrightarrow \quad & (f_n - f)^\ast (f_n - f) \to 0 \mbox{ bilaterally almost uniformly}\\
    f_n \to f \mbox{ row almost uniformly}
    & \quad \Longleftrightarrow \quad & (f_n - f) (f_n - f)^\ast \to 0 \mbox{ bilaterally almost uniformly}\\
  \end{array}
\]
and the analogous terminology used in the theory of operator spaces, see \cite[Section 3.4]{EffRu2000Book}. 

The above definitions generalize the notion of almost everywhere convergence in the case of finite measure spaces. Indeed, by Egorov's theorem (see \cite[2.33]{Foll1999}) a sequence of measurable functions $f_n$ converges almost everywhere to $f$ iff for every $\epsilon > 0$ there is a measurable set $E$ with $\mu(E) < \epsilon$ and such that $\1_{E^C} (f_n - f)$ converge uniformly to $0$.

Like in the classical case, the standard approach to prove that a family of operators $S_n(f)$ converge bilaterally almost uniformly for every $f$ in a $L_p$ or Orlicz space is to try to obtain a noncommutative maximal inequality for elements $(S_n(f))_{n}$ i.e.
\[
  \big\| (S_n)_n: L_p(\M) \to L_p(\M;\ell_\infty) \big\| < \infty.
\]
We will use a modification of this principle to prove Theorem \ref{thm:MainTool}. Similarly, when the goal is to prove column/row almost uniform convergence asymmetric versions of the space $L_p(\M;\ell_\infty)$ given by
\begin{eqnarray*}
  L_p(\M;\ell_\infty^r) & = & L_p(\M) \, \big( \ell_\infty \weaktensor \M \big)\\
  L_p(\M;\ell_\infty^c) & = & \big( \ell_\infty \weaktensor \M \big) \, L_p(\M)
\end{eqnarray*}
have to be used, see \cite{Hong2016asymmetric} for the details.

% Extrapolation and interpolation
\subsection*{Extrapolation and interpolation for maximal inequalities}
Two tools that we are going to use repeatedly are the noncommutative analogues of Marcinkiewicz interpolation and Yano's extrapolation \cite{Yano1951} in the case of maximal operators. The particular noncommutative analogue of Marcienkiewicz interpolation that we are going to use was obtained in \cite{JunXu2007} while the noncommutative extrapolation was obtained in \cite{Hu2009}. Let us start by recalling the following definition, which extends the classical notion of weak type $(p,p)$ to von Neumann algebras. 

\begin{definition}
  \label{def:Weaktype}
  Let $(S_n)_n$ be a family of positivity preserving operators $S_n: L_p(\M) \to L_0(\M)$. $(S_n)_n$ is said to be of \emph{maximal weak type $(p,p)$} iff for every $f \in L_p(\M)_+$ and $\lambda > 0$ there is a projection $e \in \Prj(\M)$ such that
  \begin{enumerate}[leftmargin=1.2cm, label={\rm (\roman*)}, ref={\rm (\roman*)}]
    \item $\displaystyle{\tau(e^\perp) \lesssim \frac{\| f \|_p^p}{\lambda^p}}$
    \item $\displaystyle{e \, S_n(f) \, e \leq \lambda \, e}$ for every $n$.
  \end{enumerate}
\end{definition}

Observe that every family that induces a bounded map $(S_n)_n: L_p(\M) \to L_p(\M;\ell_\infty)$ is of maximal weak type $(p,p)$. In the case of Abelian von Neumann algebras, this definition coincides with the associated maximal operator $\R(f) = \sup_n |S_n(f)|$ being of weak type $(p,p)$.

\begin{theorem}[{\bf \cite[Theorem 3.1]{JunXu2007}}]
  \label{thm:JunXuRealInterpolation}
  Assume $(S_n)_n$ is a family of positivity preserving operators $S_n: L_{p_0}(\M) + L_{p_1}(\M) \to L_0(\M)$
  that are of maximal weak type $(p_0,p_0)$ and $(p_1, p_1)$ for $1 \leq p_0 < p_1 \leq \infty$.
  Then, for every $p_0 < p < p_1$
  \[
    \big\| (S_n)_n: L_p(\M) \to L_p(\M;\ell_\infty) \big\|
    \lesssim
    \max \bigg\{ \Big(\frac1{p} - \frac1{p_1} \Big)^{-2}, \Big(\frac1{p_0} - \frac1{p} \Big)^{-2} \bigg\}.
  \]
\end{theorem}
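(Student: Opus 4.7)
The plan is to build, for every $f\in L_p(\M)_+$, a single positive majorant $g\in L_p(\M)$ with $S_n(f)\le g$ for all $n$ and $\|g\|_p\lesssim C_{p,p_0,p_1}\|f\|_p$; the conclusion then follows from the characterization \eqref{eq:L1Linfty} of the $L_p[\ell_\infty]$-norm for positive sequences, after the obvious reduction to $f\ge 0$ via positivity of the $S_n$. I would proceed by a dyadic Calder\'on--Zygmund decomposition of $f$ paired with a nested use of the two maximal weak type hypotheses.

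Concretely, normalize $\|f\|_p=1$ and, for every $k\in\ZZ$, split $f=f^{(k)}_0+f^{(k)}_1$ using the spectral projections of $f$ at level $2^k$, so that $f^{(k)}_0\in L_{p_0}(\M)$ and $f^{(k)}_1\in L_{p_1}(\M)$. Apply maximal weak type $(p_i,p_i)$ to $f^{(k)}_i$ at level $2^k$ to obtain projections $e^{(k)}_i$ satisfying $e^{(k)}_i S_n(f^{(k)}_i) e^{(k)}_i\le 2^k e^{(k)}_i$ and $\tau((e^{(k)}_i)^\perp)\lesssim 2^{-kp_i}\|f^{(k)}_i\|_{p_i}^{p_i}$. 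Setting $e^{(k)}=e^{(k)}_0\wedge e^{(k)}_1$ and using positivity of $S_n$, one gets $e^{(k)} S_n(f) e^{(k)}\le 2\cdot 2^k e^{(k)}$ for all $n$. A standard Fubini-type computation on the dyadic scale, identical to the one in classical Marcinkiewicz interpolation, yields
\[
\sum_{k\in\ZZ} 2^{kp}\,\tau\big((e^{(k)})^\perp\big) \,\lesssim\, \Big(\tfrac1p-\tfrac1{p_1}\Big)^{-1}\|f\|_p^p + \Big(\tfrac1{p_0}-\tfrac1p\Big)^{-1}\|f\|_p^p,
\]
which accounts for one of the two powers of the interpolation constants in the statement.

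Next, I would assemble the $e^{(k)}$ into the increasing family of projections $Q_k=\bigwedge_{j\ge k}e^{(j)}$. These satisfy $Q_k\uparrow\1$, $Q_k S_n(f)Q_k\le 4\cdot 2^k Q_k$, and $\tau(Q_k^\perp)\le\sum_{j\ge k}\tau((e^{(j)})^\perp)$. The bands $r_k=Q_{k+1}-Q_k$ are pairwise orthogonal projections summing to $\1$. Using the elementary operator inequality $h\le 2qhq+2q^\perp hq^\perp$ (which follows from $(q-q^\perp)h(q-q^\perp)\ge 0$) iteratively across the nesting, one constructs a single positive operator $g$ dominating every $S_n(f)$, of the schematic form $g\sim \sum_k 2^k \widetilde r_k$, where the $\widetilde r_k$ are mild inflations of $r_k$ designed to absorb the off-diagonal contributions $r_k S_n(f) r_l$ that appear because $S_n(f)$ is not block-diagonal with respect to the bands. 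Estimating $\|g\|_p^p$ by Abel summation reduces matters to bounding $\sum_k 2^{kp}\tau(Q_k^\perp)$, which after interchanging sums collapses to a sum weighted by $k$ against $2^{kp}\tau((e^{(k)})^\perp)$; this extra logarithmic factor, coming from the iterative inflations, is what upgrades the single-power interpolation estimate into the squared one that appears in the theorem.

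The main obstacle is precisely the construction of the majorant $g$. In the commutative case one simply sets $g=\sup_n S_n(f)$ pointwise and estimates $\|g\|_p$ directly via the distribution function, which recovers the classical single-power Marcinkiewicz factors. In the noncommutative setting the absence of pointwise suprema forces one to assemble $g$ from nested projections and to absorb non-diagonal contributions through iterated splittings, and each iteration costs an extra dyadic weight; balancing this iteration against the usual Marcinkiewicz sum produces the characteristic $(\tfrac1p-\tfrac1{p_1})^{-2}$ and $(\tfrac1{p_0}-\tfrac1p)^{-2}$ blow-up. I expect this quadratic loss to be genuine and not an artefact of the proof: it is exactly the quantitative feature responsible for the $L\log^2 L\to L_1(\M;\ell_\infty)$ endpoint of noncommutative Yano extrapolation mentioned in the Introduction, and it is alluded to there as the square of the classical growth rate.
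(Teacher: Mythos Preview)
The paper does not prove this statement; it is quoted from \cite{JunXu2007} and used only as a tool. Your outline does follow the general strategy of that reference: split $f$ dyadically, apply the two weak-type hypotheses at each scale to produce projections $e^{(k)}$, nest them into $Q_k=\bigwedge_{j\ge k}e^{(j)}$, and build a majorant from the bands $r_k=Q_{k+1}-Q_k$.

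There is, however, a genuine gap at the decisive step. Your claim that interchanging sums in $\sum_k 2^{kp}\tau(Q_k^\perp)\le\sum_k 2^{kp}\sum_{j\ge k}\tau((e^{(j)})^\perp)$ produces an extra weight of order $k$ is incorrect: the inner sum $\sum_{k\le j}2^{kp}$ is geometric and equals $(1-2^{-p})^{-1}2^{jp}$, so one simply recovers $\sum_j 2^{jp}\tau((e^{(j)})^\perp)$ with no logarithmic gain whatsoever. Consequently, if the diagonal candidate $g_0=\sum_k 2^k r_k$ were truly a majorant of every $S_n(f)$, your argument would deliver the classical single power, contradicting the known optimality of the square. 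The point is that $g_0$ is \emph{not} a majorant: from $\|r_j S_n(f) r_k\|_\infty\lesssim 2^{(j+k)/2}$ one obtains only $\langle S_n(f)\xi,\xi\rangle\lesssim\big(\sum_j 2^{j/2}\|r_j\xi\|\big)^2$, and bounding this by $\langle g_0\xi,\xi\rangle=\sum_j 2^j\|r_j\xi\|^2$ via Cauchy--Schwarz fails because $\sum_j 1=\infty$. The second interpolation power in \cite{JunXu2007} arises precisely from the device that tames these off-diagonal blocks, and that device---not any downstream summation identity---is the actual content of the theorem. You have correctly located the obstruction but not supplied the mechanism that overcomes it.
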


It is very interesting to note that the constant appearing in the exponent depending of $p$, $p_0$ and $p_1$ is $-2$ instead of $-1$ like in the classical case. The order of magnitude of the constant was also shown to be optimal in \cite{JunXu2007}, proving that the commutative and noncommutative theories diverge significantly. 

We will use the result above in the cases in which $(S_n)_n$ is simultaneously of weak type $(1,1)$ and bounded as a map $(S_n)_n: \M \to \ell_\infty \weaktensor \M$. In that case the order of the norm of $(S_n)_n: L_p(\M) \to L_p(\M;\ell_\infty)$ in terms of $p$ is given by $(p-1)^{-2}$. It turns out that extrapolation results give a sort of reciprocal near $p=1$. Indeed, the following holds:

\begin{theorem}[{\bf \cite[Theorem 2.3]{Hu2009}}]
  \label{thm:Extrapolation}
  Let $(\M,\tau)$ be a finite von Neumann algebra with a faithful tracial state
  and $(S_n)_n: L_1(\M) \cap \M \to L_1(\M) + \M$ a family of positivity preserving
  operators satisfying that
  \[
    \big\| (S_n)_n: L_p(\M) \to L_p(\M;\ell_\infty) \big\|
    \, \lesssim \,
    \max \Big\{ 1, \Big( \frac1{p - 1} \Big)^\alpha \Big\},
  \]
  for every $1 < p \leq \infty$. Then
  \[
    \big\| (S_n)_n: L \log^\alpha L(\M) \to L_1(\M;\ell_\infty) \big\|
    \, < \infty
  \]
\end{theorem}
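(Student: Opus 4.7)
The plan is to adapt Yano's classical extrapolation scheme to the noncommutative mixed-norm setting. By sublinearity of the maximal construction and scaling, assume $f \geq 0$ with $\|f\|_{L\log^\alpha L}=1$, so in particular $\|f\|_1 \leq 1$. Use the spectral projections $e_k = \1_{[0,2^k)}(f)$ to split
\[
f = f_0 + \sum_{k \geq 1} f_k, \qquad f_0 = f e_1, \qquad f_k = f(e_{k+1}-e_k),
\]
so each $f_k$ is spectrally localized in $[2^k, 2^{k+1})$ with $\|f_k\|_\infty \leq 2^{k+1}$. Writing $\mu_k = \tau(e_{k+1}-e_k)$, one has $\tau(f_k) \leq 2^{k+1} \mu_k$ and the dyadic identity $\tau\bigl(f(1+\log_+ f)^\alpha\bigr) \sim \sum_{k} (k+1)^\alpha \tau(f_k)$, comparable to $\|f\|_{L\log^\alpha L}$ under the normalization.

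A crucial preliminary, valid in the finite setting, is the continuous inclusion $L_p(\M;\ell_\infty) \hookrightarrow L_1(\M;\ell_\infty)$ for every $1 \leq p \leq \infty$: given a factorization $g_n = \alpha h_n \beta$ with $\alpha, \beta \in L_{2p}(\M)$ and $h_n \in \ell_\infty \weaktensor \M$, the trace being normalized yields $\|\alpha\|_2 \leq \|\alpha\|_{2p}$ and similarly for $\beta$ by H\"older, so the same factorization witnesses $(g_n)_n \in L_1(\M;\ell_\infty)$ with the smaller norm. This allows each $L_{p_k}[\ell_\infty]$-estimate to be read off as an $L_1[\ell_\infty]$-estimate, making the scheme of summing over scales viable in the operator-valued setting.

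For $k \geq 1$, pick $p_k = 1 + 1/(k+1)$ and apply the hypothesis at $p = p_k$ together with the preceding inclusion to get
\[
\bigl\|(S_n f_k)_n\bigr\|_{L_1[\ell_\infty]} \, \leq \, \bigl\|(S_n f_k)_n\bigr\|_{L_{p_k}[\ell_\infty]} \, \lesssim \, (k+1)^\alpha \|f_k\|_{p_k}.
\]
Functional calculus on the spectral band gives $f_k^{p_k} \leq 2^{(k+1)(p_k-1)} f_k = 2 f_k$, so $\|f_k\|_{p_k} \leq 2\tau(f_k)^{1/p_k}$. Summing via the triangle inequality in $L_1[\ell_\infty]$, and handling $f_0$ via the hypothesis at $p=2$ and $\|f_0\|_2 \leq 2$,
\[
\bigl\|(S_n f)_n\bigr\|_{L_1[\ell_\infty]} \, \lesssim \, 1 + \sum_{k \geq 1} (k+1)^\alpha \tau(f_k)^{1/p_k}.
\]
Fix $C > 1$ and split the sum according to whether $\tau(f_k) \geq C^{-k}$ or not. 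In the first regime, $\tau(f_k)^{1/p_k - 1} = \tau(f_k)^{-1/(k+2)} \leq C$ gives $\tau(f_k)^{1/p_k} \leq C \tau(f_k)$, and the subsum is controlled by the Orlicz modular and hence by $\|f\|_{L\log^\alpha L}$. In the second, $\tau(f_k)^{1/p_k} \leq C^{-k(k+1)/(k+2)} \lesssim C^{-(k-1)}$, yielding a convergent tail of $O(1)$.

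The main obstacle is precisely this last balancing. The exponent $p_k = 1+1/(k+1)$ is forced on us: any smaller $p_k - 1$ would spoil $2^{(k+1)(p_k-1)} \lesssim 1$, while any larger choice would ruin $(p_k-1)^{-\alpha} \sim (k+1)^\alpha$. The resulting appearance of $\tau(f_k)^{1/p_k}$ in place of $\tau(f_k)$ is an artifact of this balance, and the dichotomy on the magnitude of $\tau(f_k)$ versus $C^{-k}$ is what converts the weighted dyadic sum back into the Orlicz modular, closing the argument and giving the sharp exponent $\alpha$ in the endpoint space $L\log^\alpha L$.
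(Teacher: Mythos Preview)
Your argument is correct. Note, however, that the paper does not prove this theorem directly---it is quoted from \cite{Hu2009}---but the paper does prove the generalization Proposition~\ref{prp:multilogarithms}, and its method differs from yours.

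The paper's route is via the atomic decomposition of $L\log^{s}L$: every element is a convex combination of atoms $a$ supported on a projection $e$ with $\|a\|_\infty \leq \tau(e)^{-1}(1+\log_+(1/\tau(e)))^{-s}$, and for each atom one chooses $p-1 = (1+\log_+(1/\tau(e)))^{-1}$. You instead decompose $f$ dyadically by \emph{spectral height}, taking $f_k = f\,\1_{[2^k,2^{k+1})}(f)$ and choosing $p_k - 1 = 1/(k+1)$; this is closer to Yano's original extrapolation scheme. The atomic approach has the advantage that each piece is already $L_\infty$-normalized relative to its support, so the estimate $\|a\|_p \leq \tau(e)^{1/p}\|a\|_\infty$ closes immediately with no tail to control; it also generalizes cleanly to the $L\log^{\beta+\alpha}L \to L\log^\beta L[\ell_\infty]$ statement the paper actually needs. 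Your approach avoids invoking the atomic decomposition machinery and is more self-contained, but you pay for this with the extra dichotomy on $\tau(f_k) \gtrless C^{-k}$ to handle the mismatch between $\tau(f_k)^{1/p_k}$ and $\tau(f_k)$. Both choices of scale-adapted exponent are essentially the same idea dressed differently: in each case $p-1$ is the reciprocal of the relevant logarithmic scale.
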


% Ergodic means 
\subsection*{Ergodic means\label{ssn:ErgodicMeans}} Let $(\M,\tau)$ be a finite von Neumann algebra. We consider the following properties for a linear operator $T:\M \to \M$:
  \begin{enumerate}[leftmargin=1.2cm, label={\rm (\Roman*)}, ref={\rm (\Roman*)}]
    \item \label{itm:MarkovianOpI} $T:\M \to \M$ is a normal contraction.
    \item \label{itm:MarkovianOpII} $T$ is \emph{completely positive}.
    \item \label{itm:MarkovianOpIII} $T$ is \emph{$\tau$-preserving}, i.e. $\tau \circ T = \tau$.
    \item \label{itm:MarkovianOpIV} $T$ is \emph{symmetric}, i.e.
    $\tau(f^\ast \, T(g)) = \tau(T(f)^\ast \, g)$ for every $f, g \in \M$.
  \end{enumerate}
  
If $T$ satisfies properties \ref{itm:MarkovianOpI}-\ref{itm:MarkovianOpIII} we will say that it is a \emph{Markovian} operator. If it satisifies \ref{itm:MarkovianOpIV} as well, then we say that it is a \emph{symmetric Marovian} operator. If $T$ satisfies that $\tau \circ T \leq \tau$ instead if \ref{itm:MarkovianOpIII}, then it is said to be \emph{submarkovian}. It is an immediate consequence of the Riesz interpolation that symmetric Markovian operators extend as contractions to $L_p$ spaces, $1 \leq p \leq \infty$. One can see that Markovian operators are also unital. Given an operator $T$ we define its \emph{ergodic means} $M_n(T)$ by
\begin{equation}
  \label{{eq:IntroErgodicMean}}
  M_n(T) = \frac1{n + 1 } \sum_{k = 0}^n T^k.
\end{equation}
Observe that $M_n(T)$ is Markovian when $T$ is and symmetric when $T$ is. Consider the subset of fixed points by $T$ in $\M$
\[
  \ker_\infty(T - \1) =  \{ f \in \M : T(f) = f \}
\]
Clearly $\ker_\infty(T - \1)$ is a weak-$\ast$ closed operator system. It is well known, see  \cite{JunXu2007} and references therein, that it is complemented and induces a splitting
\begin{equation}
  \label{eq:DecFixedPoint}
  \M = \ker_\infty(T - \1) \oplus \overline{\mathrm{Im}[T - \1]^{\wast}},
\end{equation}
where the complement is the weak-$\ast$ closure of the image of $(\1 - T)$. Denote by $F$ the projection from $\M$ to $\ker_\infty(T - \1)$. An important observation is that the decomposition in \eqref{eq:DecFixedPoint} extends to more general spaces associated to $\M$, like $L_p$ or Orlicz spaces. We will abuse notation and denote by $F$ the projection $L_p(\M) \to \ker_p(T - \1)$ as well. The following proposition gathers a few known results.

\begin{proposition}
  \label{prp:ErgodicNorm}
  Let $T:\M \to \M$ be a Markovian operator.
  \begin{enumerate}[leftmargin=1.2cm, label={\rm (\roman*)}, ref={\rm (\roman*)}]
    \item \label{itm:ErgodicNorm.1} For every $f \in L_1(\M)$, $\| M_n(T)(f) - F(f) \|_1 \to 0$.
    \item \label{itm:ErgodicNorm.2} For every $f \in \M$, $M_n(T)(f) \to F(f)$ in the weak-$\ast$ topology.
    \item \label{itm:ErgodicNorm.3} There is a weak-$\ast$ dense class $\S \subset \M$ which is norm dense in $L_p(\M,\tau)$ for $1 \leq p < \infty$ and in $L \log^\alpha L(\M)$, such that $\| M_n(T)(f) - F(f) \|_\infty \to 0$ for every $f \in \S$.
  \end{enumerate}
\end{proposition}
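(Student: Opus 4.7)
The plan is to prove (iii) first and derive (i) and (ii) from it. Define
\[
\S = \ker_\infty(T - \1) + (\1 - T)(\M),
\]
the algebraic sum inside $\M$. For $f \in \ker_\infty(T-\1)$ one has $M_n(T)(f) = f = F(f)$ trivially. For $f = g - T(g)$ with $g \in \M$, the telescoping
\[
M_n(T)(f) = \frac{1}{n+1} \sum_{k=0}^n \big( T^k g - T^{k+1} g \big) = \frac{g - T^{n+1} g}{n+1},
\]
combined with $\|T^{n+1} g\|_\infty \leq \|g\|_\infty$, yields $\|M_n(T)(f)\|_\infty \leq \frac{2 \|g\|_\infty}{n+1} \to 0$, while $F(f) = 0$ because the range of $\1 - T$ lies in the kernel of the projection $F$ associated with \eqref{eq:DecFixedPoint}. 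Hence $\|M_n(T)(f) - F(f)\|_\infty \to 0$ for every $f \in \S$.

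Next I would establish the density claims for $\S$. Weak-$\ast$ density of $\S$ in $\M$ is immediate from \eqref{eq:DecFixedPoint}, since $(\1-T)(\M)$ is weak-$\ast$ dense in $\overline{\mathrm{Im}[T - \1]^{\wast}}$. For norm density in $L_p(\M)$, $1 \leq p < \infty$, and in $L \log^\alpha L(\M)$, I would extend \eqref{eq:DecFixedPoint} to each of these spaces, exploiting that Markovianity of $T$ together with Riesz--Thorin interpolation turn $T$ into a contraction on every $L_p$ and on the relevant Orlicz space. The analogue of \eqref{eq:DecFixedPoint} then reads
\[
L_p(\M) = \ker_p(T - \1) \oplus \overline{(\1 - T) L_p(\M)}^{L_p},
\]
which is the standard $L_p$-mean ergodic theorem for reflexive $p$ and extends to $p = 1$ and the Orlicz target by approximation from $L_p \cap L_2$ dense subsets. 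Density of $\M$ in these spaces plus continuity of $T$ shows that $(\1-T)(\M)$ is norm-dense in the second summand, and the compatibility of the ergodic projection $F$ with the inclusion $\M \hookrightarrow L_p(\M)$ shows $\ker_\infty(T - \1) = F(\M)$ is dense in $\ker_p(T - \1)$. Consequently $\S$ is norm dense in each target.

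Parts (i) and (ii) are then immediate consequences. For (i), pick $f \in L_1(\M)$ and $\varepsilon > 0$; choose $g \in \S$ with $\|f - g\|_1 < \varepsilon$ and apply
\[
\|M_n(T)(f) - F(f)\|_1 \leq \|M_n(T)(f - g)\|_1 + \|M_n(T)(g) - F(g)\|_1 + \|F(g - f)\|_1,
\]
using $L_1$-contractivity of $M_n(T)$ and $F$ on the outer terms, and the fact that $\|\cdot\|_1 \leq \|\cdot\|_\infty$ (as $\tau(\1)=1$) together with (iii) on the middle term. Part (ii) is analogous, using weak-$\ast$ density of $\S$, the uniform bound $\sup_n \|M_n(T)\|_{\M \to \M} \leq 1$, and weak-$\ast$ continuity of $F$ on $\M$ (which follows from \eqref{eq:DecFixedPoint}). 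The main obstacle I foresee is making the $L_p$-analogue of \eqref{eq:DecFixedPoint} fully rigorous in the non-reflexive settings $p = 1$ and $L \log^\alpha L$; this reduces to the $L_2$ mean ergodic theorem via interpolation and a density step, a move that is routine but requires extra care when $T$ is only Markovian rather than symmetric Markovian.
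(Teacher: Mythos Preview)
Your argument for (iii) is exactly the paper's: the same class $\S = \ker_\infty(T-\1) + (\1-T)(\M)$, the same telescoping identity, and the same $2\|g\|_\infty/(n+1)$ bound. The paper in fact only justifies (iii) explicitly and treats (i) and (ii) as known results from the literature, so your additional derivation of (i) and (ii) by density is extra detail rather than a different route; your flagged caveat about the non-reflexive splitting is precisely the point the paper glosses over with ``follows similarly.''
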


Let us justify \ref{itm:ErgodicNorm.3}, we can use the decomposition in \eqref{eq:DecFixedPoint} and the fact that the complement of $\ker_\infty(T - \1)$ is the weak-$\ast$ closure of $(\1 - T)[\M]$, to define $S = \ker_\infty(T - \1) + (\1  - T)[\M]$. Now, if $g \in \S$ is of the form $g = g_1 + (g_2 - T(g_2))$ with $g_1 \in \F_\infty(T)$ and $g_2 \in \M$, we have
\[
  M_n(T)(g) = g_1 + \frac{1}{n + 1} (g_2 - T^{n + 1}(g_2))
\] 
and therefore 
\[
  \| F(g) - M_n(T)(g) \|_\infty \leq 2 \frac{\| g_2 \|_\infty}{n + 1},
\] 
which shows the convergence in $\M$. The norm density result for spaces of the form $L \log^\alpha L(\M)$ and for $L_p(\M)$ with $1 \leq p < \infty$ follows similarly.

Based on earlier results for the finite case that appeared in \cite{Lance1976Ergodic}, it was proved in \cite{Yeadon1977, Yeadon1980II} that the ergodic means $(M_m(T))_m$ are maximally of weak type $(1,1)$. Since they are trivially bounded as a map $(S_n)_n: \M \to \ell_\infty \weaktensor \M$ we can apply Theorem \ref{thm:JunXuRealInterpolation} to get that $(M_m(T))_m$ is bounded as a map $(M_m(T))_m: L_p(\M) \to L_p(\M; \ell_\infty)$. The combination of this result with the noncommutative extrapolation in Theorem \ref{thm:Extrapolation} gives the following result, which was obtained in  \cite{JunXu2007} and \cite{Hu2009}:
\begin{theorem}
  \label{thm:LpBoundErgodic}
  Let $(\M,\tau)$ be a finite von Neumann algebra with a faithful tracial state and $T$ a Markovian operator.
  \begin{enumerate}[leftmargin=1.2cm, label={\rm (\roman*)}, ref={\rm (\roman*)}]
    \item \label{itm:LpBoundErgodic.1}
    For every $1 < p \leq \infty$, 
    \[
      \big\| (M_n(T))_n: L_p(\M) \to L_p(\M;\ell_\infty) \big\|
      \, \lesssim \,
      \max \Big\{ 1, \Big( \frac1{p - 1} \Big)^2 \Big\}.
    \]
    \item \label{itm:LpBoundErgodic.2}
    $\displaystyle{\big\| (M_n(T))_n: L \log^2 L(\M) \to L_1(\M;\ell_\infty) \big\| < \infty}$
    \item \label{itm:LpBoundErgodic.3}
    For every $f \in L_1(\M)$, $M_m(T)(f) \to F(f)$ bilaterally almost uniformly as $m \to \infty$.
    \item \label{itm:LpBoundErgodic.4}
    If $T$ is symmetric Markovian, then \ref{itm:LpBoundErgodic.1} and \ref{itm:LpBoundErgodic.2} hold with $T^m$ instead of $M_m(T)$.
  \end{enumerate}
\end{theorem}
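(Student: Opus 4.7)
For \ref{itm:LpBoundErgodic.1} and \ref{itm:LpBoundErgodic.2} the plan is to combine the real-interpolation and extrapolation results reviewed in this section with the endpoints already available in the literature. Yeadon's theorem (cited just above the statement) provides that $(M_n(T))_n$ is of maximal weak type $(1,1)$. The other endpoint is immediate: since each $M_n(T)$ is a convex combination of normal unital contractions, the trivial bound $\|(M_n(T)(f))_n\|_{\ell_\infty \weaktensor \M} \le \|f\|_\infty$ is the maximal analogue of weak type $(\infty,\infty)$. Applying Theorem~\ref{thm:JunXuRealInterpolation} with $p_0=1$ and $p_1=\infty$ then yields \ref{itm:LpBoundErgodic.1} with the growth $\lesssim \max\{1,(p-1)^{-2}\}$, and feeding this bound (with exponent $\alpha=2$) into Theorem~\ref{thm:Extrapolation} gives \ref{itm:LpBoundErgodic.2} at once.

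For \ref{itm:LpBoundErgodic.3} I would invoke the noncommutative Banach principle: Yeadon's weak $(1,1)$ maximal inequality together with norm convergence of $M_m(T)(g) \to F(g)$ on the $\|\cdot\|_1$-dense class $\S$ supplied by Proposition~\ref{prp:ErgodicNorm}\ref{itm:ErgodicNorm.3} yield b.a.u.\ convergence on all of $L_1(\M)$. Concretely, given $f \in L_1(\M)$ and $\varepsilon>0$, split $f = g + h$ with $g \in \S$ and $\|h\|_1 < \varepsilon^2$, and apply Yeadon at level $\varepsilon$ to the positive and negative parts of $h$ (and to $F(h)$, noting that $F$ is an $L_1$-contraction) to obtain a projection $e \in \Prj(\M)$ with $\tau(e^\perp) \lesssim \varepsilon$ such that
\[
  \big\| e^\perp \big( M_n(T)(h) - F(h) \big) e^\perp \big\|_\infty \, \lesssim \, \varepsilon
\]
uniformly in $n$. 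Combining with the uniform convergence on $g$ and diagonalising over $\varepsilon$ completes the proof for arbitrary $f \in L_1(\M)$.

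Part \ref{itm:LpBoundErgodic.4} is the subtlest, since a single power $T^m$ is not dominated by the Ces\`aro averages $M_n(T)$ and Yeadon's estimate does not transfer directly. The assumption that $T$ is symmetric Markovian is precisely what permits a Rota/Stein-type dilation to a larger finite von Neumann algebra $(\widetilde{\M},\widetilde{\tau})$, in which $T^{2m}$ is represented as $\EE_0 \circ \sigma^m \circ \sigma^{\ast m} \circ \EE_0$ for a trace-preserving $\ast$-endomorphism $\sigma$ and a conditional expectation $\EE_0$ onto $\M$. The maximal estimate for $T^{2m}$ then descends from Junge's noncommutative Doob inequality on $\widetilde{\M}$ (which enjoys the same $(p-1)^{-2}$ $L_p$-growth rate), odd powers are absorbed via $\|T\|_{\M \to \M} \le 1$, and the $L\log^2 L$ endpoint follows once more from Theorem~\ref{thm:Extrapolation}. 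The main obstacle here is carrying out the dilation in the noncommutative setting with the required positivity and trace-preservation properties, and I would import this construction directly from \cite{JunXu2007}.
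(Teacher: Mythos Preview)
Your proposal for \ref{itm:LpBoundErgodic.1}--\ref{itm:LpBoundErgodic.3} is exactly what the paper does: it invokes Yeadon's weak type $(1,1)$ together with the trivial $\ell_\infty\weaktensor\M$ endpoint, feeds these into Theorem~\ref{thm:JunXuRealInterpolation} for \ref{itm:LpBoundErgodic.1}, then into Theorem~\ref{thm:Extrapolation} for \ref{itm:LpBoundErgodic.2}, and derives \ref{itm:LpBoundErgodic.3} by the Banach-principle reduction to the dense class $\S$ of Proposition~\ref{prp:ErgodicNorm}. (A small remark: in \ref{itm:LpBoundErgodic.3} you should decompose $h$ into real/imaginary parts before taking positive/negative parts, but this is cosmetic.)

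For \ref{itm:LpBoundErgodic.4} your route is correct but not the one the paper and \cite{JunXu2007} actually take. The paper points to ``an argument based on interpolation due originally to Stein'': in \cite{JunXu2007} this is carried out analytically, exploiting that a symmetric Markovian $T$ is a self-adjoint contraction on $L_2(\M)$, extending $n\mapsto T^n$ to a holomorphic family, and comparing $T^n$ with the averages $M_n(T)$ via square-function/Littlewood--Paley estimates rather than via a dilation. The Rota-type dilation you describe (realizing $T^{2m}$ as a reversed martingale in an enlargement $\widetilde{\M}$ and then invoking Doob) is a genuinely different and equally valid argument, and it does deliver the same $(p-1)^{-2}$ growth through Junge's Doob inequality; however, that construction is not what you will find in \cite{JunXu2007}, so your final sentence misattributes it. If you keep the dilation approach, cite instead the noncommutative Rota theorem (e.g.\ Anantharaman-Delaroche or the version recorded in Junge--Ricard--Shlyakhtenko), and note that covering odd powers via $T^{2m+1}=T\circ T^{2m}$ requires the extra observation that left multiplication by a fixed contraction is bounded on $L_p(\M;\ell_\infty)$, which is immediate from the factorization definition.
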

  
It is worth noting that the analogue of \ref{itm:LpBoundErgodic.3} is false when $M_m(T)$ is replaced by $T^m$ even if $T$ is symmetric. The reason is that $T^m$ may not satisfy a maximal weak $(1,1)$ inequality and the passage from $M_m(T)$ to $T^m$ requires and argument based on interpolation due originally to Stein (see \cite{Ste1970}). Moreover, there are Abelian examples of Markovian operators such that $T^m(f)$ diverges almost everywhere for some $f \in L_1$, see \cite{Ornstein1968Pointwise}. Nevertheless, it holds that $T^m(f) \to f$ for every $f \in L \log^2 L(\M)$, when $\T$ is symmetric. 

% Semigroups
\subsection*{Semigroups} Recall that a semigroup of operators is a family $(S_t)_{t \geq 0}$ that satisfies $S_t \circ S_s = S_{t + s}$ and $S_0 = \Id$.

\begin{definition}
  \label{def:MarkovianSemigroup}
  Let $(\M,\tau)$ be a finite von Neumann algebra. A semigroup $(S_t)_{t \geq 0}$ is called Markovian iff
  each $S_t:\M \to \M$ is symmetric Markovian and the map $t \mapsto S_t$ is pointwise weak-$\ast$ continuous.
\end{definition}

The theory of Markovian semigroups and their ergodic theory can be understood as a continuous analogue of
that of Markovian operators. With that in mind, given $s \geq 0$ we define its ergodic mean associated to the semigroup
$\S=(S_t)_{t \geq 0}$ by
\[
  M_s(\S)(f) = \frac1{s} \int_0^s S_t(f) \, d  t = \dashint_0^s S_t(f) \, d  t.
\]
The Dunford-Schwartz ergodic theorem can be extended to the noncommutative setting: the family $(M_s(\S))_{s \geq 0}$ is of maximal weak type $(1,1)$, see \cite{JunXu2007}. As a consequence of that, we have that there is bilateral almost uniform convergence of $M_s(\S)f$ to $f$ as $s \to 0^+$. In contrast, the problem of determining whether $S_t(f) \to f$ bilaterally almost uniformly for $f \in L_1(\M)$ is far more difficult, since the operators $(S_t)$ may not satisfy a maximal inequality. Nevertheless, there is a class of semigroups which have bilateral almost uniform convergence for every element in $L_1(\M)$, the so called \emph{subordinated semigroups}. Given semigroups $\S = (S_t)_{t \geq 0}$ and $\T = (T_t)_{t \geq 0}$, $\S$ is subordinated to $\T$ iff there is a family of functions $\phi_t: \RR_+ \to \RR_+$ such that
\begin{itemize}[leftmargin=1.2cm]
  \item $\phi_{t_1 + t_2} = \phi_{t_1} \ast \phi_{t_2}$.
  \item For every $f \in \M$ and $t \geq 0$
  \[
    S_t(f) = \int_{0}^\infty T_s(f) \phi_t(s) \, d \, s.
  \]
\end{itemize}
Additionally, when the functions $\phi_t$ are of bounded variation and satisfy that $\lim_{s \to \infty} \phi_t(s) = 0$ for every $s$, we have 
\begin{equation}
  S_t = \int_0^\infty T_s \phi_t(s) \, d  s
      =  - \int_0^\infty T_s \bigg( \int_s^\infty d\phi_t(z) \bigg) \, d  s
      =  \quad - \int_0^\infty \bigg( \dashint_0^z T_s \bigg) z \, d\phi_t(z),
\end{equation}
where the signed measure $d \phi_t(z)$ is the Lebesgue-Stieltjes derivative of $\phi_t$. If 
\[
  \sup_{t \geq 0} \int_0^\infty z \, d |\phi_t| (z) < \infty,
\]
where $d |\phi_t| (z)$ is the total variation associated to the signed measure above, we have that the bilateral almost uniform convergence of $(S_s)_{s \geq 0}$ is implied by that of $(M_s(\T))_{t \geq 0}$.

One important class of subbordinated semigroups are the \emph{subordinated Poisson semigroups}. Since $\S = (S_t)_{t \geq 0}$ extends to $L_2(\M)$, we can use the spectral theorem to express $S_t$ as $e^{- t A}$, when $A$ is an unbounded self adjoint operator. A result that goes back at least to Stein \cite{Ste1970} implies that $P_s = e^{- s A^\frac12}$ is also a Markovian semigroup and the following subordination formula holds
\begin{equation}
  \label{eq:Subordination}
  P_s(f)
    = \frac1{2 \sqrt{\pi}}
    \int_0^\infty \underbrace{s e^{-\frac{s^2}{4 v}} v^{-\frac{3}{2}}}_{\phi_s(v)} S_v(f) \, d v.
\end{equation}
The formula above follows from an explicit computation of the inverse Laplace transform of $e^{- s \eta^\frac12}$. The name of Poisson stems from the fact that, when $S_t$ is taken to be the heat semigroup in $\RR^n$ or $\TT^n$, $P_s$ recovers the classical Poisson semigroup. A simple computation shows that the functions $\phi_s(v)$ satisfy
\[
  \sup_{s>0} \Big\| s \, \frac{\partial}{\partial \, s} \phi_s(v) \Big\|_1 < \infty.   
\]
As a consequence we obtain the following:

\begin{proposition}
  \label{prp:subbordinationBAU}
  Let $\S = (S_t)_{t \geq 0}$ be a Markovian semigroup and $P_s(f)$ its subordinated Poisson semigroup.
  For every $f \in L_1(\M)$ we have that $P_s(f) \to f$ bilaterally almost uniformly as $s \to 0$.
\end{proposition}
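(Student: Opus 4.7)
The plan is to deduce the b.a.u.\ convergence of $P_s(f)$ from the corresponding convergence of the ergodic means $M_v(\S)(f)\to f$ as $v\to 0^+$, which is the noncommutative Dunford--Schwartz theorem for the continuous parameter semigroup $\S$ and was recalled in the preliminaries together with its maximal weak-$(1,1)$ inequality. The bridge is the subordination formula already displayed in the text. Since $\phi_s$ vanishes at both endpoints and $v\phi_s(v)\to 0$ as $v\to 0^+$ and $v\to\infty$, integration by parts in \eqref{eq:Subordination} gives
\[
  P_s(f) \, = \, -\int_0^\infty v \, M_v(\S)(f) \, d\phi_s(v),
\]
and the analogous identity $-\int_0^\infty v\, d\phi_s(v)=\int_0^\infty \phi_s(v)\,dv=1$ subtracts to produce the key representation
\[
  P_s(f) - f \, = \, -\int_0^\infty v \, \big( M_v(\S)(f) - f \big) \, d\phi_s(v).
\]

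Fix $f\in L_1(\M)$ and $\varepsilon>0$. Using that the family $(M_v(\S))_{v>0}$ is of maximal weak type $(1,1)$ and that $M_v(\S)(f)\to f$ bilaterally almost uniformly as $v\to 0^+$, I would intersect the two witnessing projections to obtain a single $e\in\Prj(\M)$ with $\tau(e^\perp)<\varepsilon$ such that simultaneously
\[
  C_\varepsilon \, := \, \sup_{v > 0} \big\| e \, M_v(\S)(f) \, e \big\|_\infty \, < \, \infty
  \quad \text{and} \quad
  \big\| e \big( M_v(\S)(f) - f \big) e \big\|_\infty \, \xrightarrow[v \to 0^+]{} \, 0.
\]
Compressing the representation by $e$ on both sides and taking operator norms yields
\[
  \big\| e \big( P_s(f) - f \big) e \big\|_\infty
  \, \le \, \int_0^\infty v \, \big\| e \big( M_v(\S)(f) - f \big) e \big\|_\infty \, d|\phi_s|(v).
\]
Splitting the integration at a small $\delta>0$ and bounding the integrand by a prescribed $\eta>0$ on $(0,\delta]$ and by $2 C_\varepsilon$ on $(\delta,\infty)$, I get
\[
  \big\| e \big( P_s(f) - f \big) e \big\|_\infty \, \le \, \eta \cdot K \, + \, 2 C_\varepsilon \int_\delta^\infty v \, d|\phi_s|(v),
\]
where $K := \sup_{s>0}\int_0^\infty v\,d|\phi_s|(v)<\infty$ follows from the regularity of $\phi_s$ announced just before the statement.

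The main obstacle is to check that the tail $\int_\delta^\infty v\,d|\phi_s|(v)$ vanishes as $s\to 0^+$, so that $\|e(P_s(f)-f)e\|_\infty\to 0$ and the proposition follows from the arbitrariness of $\eta$ and $\varepsilon$. This is exactly where the explicit scaling of $\phi_s$ enters: writing $\phi_s(v)=s^{-2}\psi(v/s^2)$ for the fixed profile $\psi(u)=(2\sqrt{\pi})^{-1}u^{-3/2}e^{-1/(4u)}$, the substitution $u=v/s^2$ shows that the measure $v\,d\phi_s(v)$ pulls back to the $s$-independent measure $u\psi'(u)\,du$, so
\[
  \int_\delta^\infty v \, d|\phi_s|(v) \, = \, \int_{\delta/s^2}^\infty u |\psi'(u)| \, du \, \longrightarrow \, 0 \quad \text{as} \quad s \to 0^+.
\]
A secondary technical point is to justify the integration-by-parts representation for general $f\in L_1(\M)$: the identity is clean for bounded $f$, where the boundary terms at $0$ and $\infty$ vanish in operator norm because $v\, M_v(\S)(f)$ is dominated by $v\,\|f\|_\infty$ while $v\phi_s(v)\to 0$ at both ends, and the general case follows from norm density of $\M$ in $L_1(\M)$ combined with the uniform bound $\int_0^\infty v\,\|M_v(\S)(f)-f\|_1\,d|\phi_s|(v)\le 2K\|f\|_1$ coming from the $L_1$-contractivity of $M_v(\S)$.
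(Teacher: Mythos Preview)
Your proof is correct and follows essentially the same route as the paper: integrate by parts in the subordination formula to write $P_s$ in terms of the ergodic means $M_v(\S)$, then deduce the b.a.u.\ convergence of $P_s(f)$ from that of $M_v(\S)(f)$. The paper does not spell out a proof of the proposition beyond the general discussion preceding it, and in fact your argument is more complete: the paper only records the uniform bound $\sup_s\int_0^\infty v\,d|\phi_s|(v)<\infty$, whereas you correctly isolate and verify the additional ingredient that the tail $\int_\delta^\infty v\,d|\phi_s|(v)$ vanishes as $s\to 0^+$ via the scaling $\phi_s(v)=s^{-2}\psi(v/s^2)$, which is what actually drives the convergence after the split at $\delta$.
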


% Martingales
\subsection*{Martingales}
Here we are going to give basic facts and definitions regarding the theory of noncommutative martingale sequences, see also \cite{Pisier2016MartingaleBook,PisierXu1997Martingales} for more details. Let $\N \subset \M$ be two finite von Neumann algebras and assume that the inclusion $i$ is unital. Since $\tau (i(f)) = \tau(f)$, $i$ extends to the $L_1$-spaces $L_1(\N) \subset L_1(\M)$. Let us denote this extended inclusion by $i_1$. Its dual $i_1^\ast = \EE_\N: \M \to \N \subset \M$ is the conditional expectation onto $\N$, i.e. the map $\EE_\N$ is a surjective and completely positive projection that is furthermore $\N$-bimodular. 
Given a sequence of finite von Neumann subalgebras $(\M_m)_n$ of $\M$, we will say that they are nested if $\M_m \subset \M_{m+1}$. We will also assume in all of the examples that $(\M_m)_m$ approximates $\M$ is the sense that
\[
  \overline{{\bigcup_{n \geq 1} \M_n}^{\wast}} = \M.
\]
The sequence of algebras $(\M_m)_m$ is sometimes referred to as a filtration and the associated sequence of operators $f_m = \EE_m(f)$ as a martingale sequence. We will denote by $\C$ the union of all the algebras $\M_m$. Clearly $\S$ is norm dense in $L_p(\M)$ with $1 \leq p < \infty$. We also have that the analogue of Proposition \ref{prp:ErgodicNorm} holds for the weak-$\ast$ dense algebra $\S$ given by the union of $\M_n$.

%\begin{proposition}
%  \label{prp:MartingaleNorm}
%  Let $(\M,\tau)$ be a finite von Neumann algebra and $(\M_m)_m$, $(\EE_m)_m)$ and $\S$
%  be as above. We have that
%  \begin{enumerate}[leftmargin=1.2cm, label={\rm (\roman*)}, ref={\rm (\roman*)}]
%    \item \label{itm:MartingaleNorm.1} For every $x \in L_1(\M)$, $\| \EE_n(f) - f \|_1 \to 0$.
%    \item \label{itm:MartingaleNorm.2} For every $x \in \M$, $\EE_n(f) \to f$ in the weak-$\ast$ topology.
%    \item \label{itm:MartingaleNorm.3} There is a class $\S \subset \M$, weak-$\ast$ dense in $\M$ and
%    norm dense in $L_p$-spaces for $1 \leq p < \infty$ and inside $L \log^\alpha L(\M)$,
%    such that $\| \EE_n(f) - f \|_\infty$, for every $f \in \S$.
%  \end{enumerate}
%\end{proposition}

Cuculescu showed that the family $(\EE_n)$ is maximally of weak type $(1,1)$, see \cite{Cuculescu1971}. This result is a generalization of Doob's maximal inequality in the noncommutative setting. We have the following consequences, obtained through Theorems \ref{thm:Extrapolation} and \ref{thm:JunXuRealInterpolation}.

\begin{theorem}
  \label{thm:MartingalesLpMax}
  Let $(\M,\tau)$, $(\M_m)_m$ and $(\EE_m)_m$ be as above.
  \begin{enumerate}[leftmargin=1.2cm, ref={\rm (\roman*)}, label={\rm (\roman*)}]
    \item For every $1 < p < \infty$
    \begin{equation}
      \label{eq:MartingalesLpMax1}
      \big\| (\EE_n)_n: L_p(\M) \to L_p(\M; \ell_\infty) \big\|
      \, \lesssim \,
      \max \Big\{ 1, \Big( \frac1{p - 1} \Big)^2 \Big\}.
    \end{equation}
    \item \label{eq:MartingalesLpMax2}
      $\displaystyle{
      \big\| (\EE_n)_n: L \log^2 L(\M) \to L_1(\M; \ell_\infty) \big\|
      \, < \,
      \infty.}$
    \item As a consequence $\EE_n(f) \to f$ bilaterally almost uniformly
    for every $f \in L_1(\M)$.
  \end{enumerate}
\end{theorem}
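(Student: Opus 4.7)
The plan is to apply the two abstract interpolation/extrapolation results (Theorems \ref{thm:JunXuRealInterpolation} and \ref{thm:Extrapolation}) to the one robust input available for martingales, namely Cuculescu's weak-type $(1,1)$ maximal inequality, and then deduce the b.a.u.\ statement from a standard $\epsilon/3$-argument using the norm-dense subalgebra $\C = \bigcup_n \M_n$.

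First I would record the two endpoints. Cuculescu's theorem \cite{Cuculescu1971} states exactly that the family $(\EE_n)_n$ of conditional expectations is of maximal weak type $(1,1)$ in the sense of Definition \ref{def:Weaktype}. On the other hand, each $\EE_n:\M\to\M$ is a unital completely positive contraction, so trivially
\[
  \big\|(\EE_n)_n : \M \to \ell_\infty \weaktensor \M\big\| \le 1,
\]
which is the endpoint $p=\infty$. Feeding both endpoints into Theorem \ref{thm:JunXuRealInterpolation} with $p_0=1$ and $p_1=\infty$ yields, for every $1<p<\infty$,
\[
  \big\|(\EE_n)_n : L_p(\M) \to L_p(\M;\ell_\infty)\big\|
  \,\lesssim\, \max\Bigl\{1,\Bigl(\tfrac{1}{p-1}\Bigr)^{2}\Bigr\},
\]
which is part (i). To get (ii), I would feed this $L_p$-estimate, with exponent $\alpha=2$ on $(p-1)^{-1}$, directly into the noncommutative Yano-type extrapolation in Theorem \ref{thm:Extrapolation}, obtaining
\[
  \big\|(\EE_n)_n : L\log^2 L(\M) \to L_1(\M;\ell_\infty)\big\| < \infty.
\]

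Finally, for the b.a.u.\ convergence statement in (iii), I would use the maximal weak type $(1,1)$ inequality from Cuculescu together with norm density of $\C=\bigcup_n\M_n$ in $L_1(\M)$. Fix $f\in L_1(\M)$ and $\epsilon>0$. For any $g\in\C$ one has $g\in\M_{n_0}$ for some $n_0$, hence $\EE_n(g)=g$ for all $n\ge n_0$ and convergence of $\EE_n(g)$ to $g$ in any reasonable sense is trivial. For a general $f$, choose $g\in\C$ with $\|f-g\|_1$ small, apply the maximal weak type $(1,1)$ inequality to $f-g$ at an appropriately chosen level $\lambda=\lambda(\epsilon)$ to produce a projection $e$ with $\tau(e^\perp)<\epsilon$ such that
\[
  e\,\EE_n(f-g)\,e \,\le\, \lambda\, e \quad\text{for all }n,
\]
and combine this (twisted to the bilateral form $e^\perp(\cdot)e^\perp$) with the exact convergence for $g$ to deduce that
\[
  \bigl\| e^\perp(\EE_n(f)-f)e^\perp\bigr\|_\infty \longrightarrow 0.
\]
The only slightly delicate point is to turn the one-sided maximal weak estimate into a bilateral uniform control of $e^\perp(\EE_n(f)-f)e^\perp$; this is standard and is exactly the mechanism by which Cuculescu's inequality is known to yield b.a.u.\ convergence (precisely the noncommutative Doob convergence theorem, which is the one statement here that requires a little care rather than mere invocation of the abstract machines).
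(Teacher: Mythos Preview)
Your proposal is correct and follows exactly the approach the paper takes: Cuculescu's weak type $(1,1)$ plus the trivial $L_\infty$ endpoint fed into Theorem \ref{thm:JunXuRealInterpolation} for (i), then Theorem \ref{thm:Extrapolation} with $\alpha=2$ for (ii), and the standard density argument for (iii). One small notational slip: at the end you write $e^\perp(\EE_n(f)-f)e^\perp$ where, with your convention that $e$ is the large projection from the weak-type inequality, it should be $e(\EE_n(f)-f)e$; this is cosmetic and does not affect the argument.
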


% Free and tensor products of von Neumann algebras
\subsection*{Free products of von Neumann algebras}
It is easy to see that if $\phi_1: \M_1 \to \M_1$ and $\phi_2: \M_2 \to \M_2$ are completely positive normal maps over von Neumann algebras, their tensor product $\phi_1 \otimes \phi_2: \M_1 \weaktensor \M_2 \to \M_1 \weaktensor \M_2$ is also completely positive and normal. Furthermore, if $\tau_1$ and $\tau_2$ are traces, we have that the tensor product of two Markovian operators $T_1: \M_1 \to \M_1$ and $T_2: \M_2 \to \M_2$ extends  to a Markovian operator $T_1 \otimes T_2: \M_1 \weaktensor \M_2 \to \M_1 \weaktensor \M_2$. The same result holds for $\EE_1 \otimes \EE_2: \M_1 \weaktensor \M_2 \to \N_1 \weaktensor \N_2 \subset \M_1 \weaktensor \M_2$ whenever $\EE_i: \M_i \to \N_i \subset \M_i$ are conditional expectations. We are going to recall a result from \cite{Choda1996} that allows to do the same for free products. 

Let $(\M_i, \tau_i)$ for $i \in \{1,2\}$ be two finite von Neumann algebras and assume that they are cyclically represented inside Hilbert spaces $\M_i \subset \B(H_i)$. We will fix cyclic vectors $\xi_i \in H_i$. The reader can assume without loss of generality that $(H_i, \xi_i)$ is the GNS construction associated to $\tau_i$.
We will denote by $(H,\xi)$ the \emph{free product} of the pointed Hilbert spaces $(H_i,\xi_i)$ given by
\[
  H = \CC \xi \oplus \bigoplus_{n \geq 0} \, \bigoplus_{i_1 \neq i_2 \, ... \, \neq i_n} \,
              H_{i_1}^\circ \otimes H_{i_2}^\circ \otimes H_{i_3}^\circ \, ... \, \otimes H_{i_n}^\circ,
\]
where $\H_i^\circ = H_i \ominus \CC \xi_i = (\CC \xi_i)^\perp$. In order to define the reduced free product $\M_1 \ast \M_2$, notice that there are two unitary maps $V_i: H(i) \otimes_2 H_i \to H$, where each $H(i) \subset H$ is given by
\[
  H(i) = \CC \xi \oplus \bigoplus_{n \geq 0} \, \bigoplus_{i \neq i_2 \, ... \, \neq i_n} \,
              H_{i}^\circ \otimes H_{i_2}^\circ \otimes H_{i_3}^\circ \, ... \, \otimes H_{i_n}^\circ,
\]
see \cite{Voiculescu1992Book} for the details. The unitaries above allow to define two faithful representations $\pi_i: \M_i \to B(H)$ for $i \in \{1,2\}$ given by 
\[
  \pi_i(a) = V_i \, (\1 \otimes a) \, V_i^\ast.
\]
We will denote by $\pi = \pi_1 \ast \pi_2$ the $C^\ast$-algebra representations defined over the universal free product representation of $\M_1$ and $\M_2$ as $C^\ast$-algebras. The image of $\pi$ is a quotient of such universal $C^\ast$- algebra. We will denote by $\M_1 \ast \M_2$ the von Neumann algebra given by the weak-$\ast$ closure on the image of $\pi$. The vector $\xi \in \H$ defines a natural trace that we will denote by $\tau_1 \ast \tau_2$. Notice that if we denote by $\M_i^\circ$ the weak-$\ast$ closed linear subsbpace given by
\[
  \M_i^\circ = \big\{ f \in \M_i : \tau_i(f) = 0 \big\},
\]
then we can define a weak-$\ast$ dense unital $\ast$-algebra $D$ inside $\M_1 \ast \M_2$ as
\[
  D = \CC \1 \oplus \bigoplus_{n \geq 0} \, \bigoplus_{i_1 \neq i_2 \, ... \, \neq i_n} \,
              \M_{i_1}^\circ \otimes \M_{i_2}^\circ \otimes \M_{i_3}^\circ \, ... \, \otimes \M_{i_n}^\circ,
\]
where the direct sums and tensor products are understood in an algebraic sense. The multiplication is given by associativity and the following rule for pure tensors 
\[
  \begin{split}
    \big( x_1 \otimes & x_2 \otimes \, ... \otimes x_n \big) \cdot \big( y_1 \otimes y_2 \, ... \otimes y_m \big)\\
    & =
    \begin{cases}
      x_1 \otimes x_2 \otimes \, ... \otimes x_n \otimes y_1 \otimes y_2 \, ... \otimes y_m
        & \mbox{ when } x_n \in \M_{i_n}^\circ, \, y_1 \in \M_{j_1}^\circ \mbox{ and } i_n \neq j_1 \\
     (x_1 \otimes x_2 \otimes \, ... \otimes x_{n-1}) \cdot \big( (x_n \, y_1)^\circ \, + \, \tau(x_n \, y_1) \1 \big) \otimes y_2 \, ... \otimes y_m & \mbox{ otherwise }
    \end{cases}
  \end{split}
\]
In the formula above $(x_n \, y_1)^\circ$ is notation for $x_n \, y_1 \, - \, \tau(x_n \, y_1)$. Given two unital maps $T_1$ and $T_2$ preserving the states $\tau_1$ and $\tau_2$ respectively, we can define the map $T_1 \ast T_2: D \to D$ by linear extension of 
\[
  (T_1 \ast T_2) \big( x_1 \otimes x_2 \otimes \, ... \otimes x_n \big)
  = 
  T_{i_1}(x_1) \otimes T_{i_2}(x_2) \otimes \, ... \otimes T_{i_n}(x_n),
\]
where $x_1 \otimes x_2 \otimes \, ... \otimes x_n \in \M_{i_1}^\circ \otimes \M_{i_2}^\circ \otimes \M_{i_3}^\circ \, ... \, \otimes \M_{i_n}^\circ$ with $i_1 \neq i_2 \, ... \, \neq i_n$. The following result states that we can extend the map $T_1 \ast T_2$ to the whole $\M_1 \ast \M_2$ when the $T_i$ are normal u.c.p. and trace preserving.

\begin{theorem}[{\bf \cite[Proposition 2.1/Corollary 2.2]{Choda1996}}]
  \label{thm:FreeProductExtension}
  \
  \begin{enumerate}[leftmargin=1.2cm, ref={\rm (\roman*)}, label={\rm (\roman*)}]
    \item \label{itm:FreeProductExtension.1}
    Let $T_i: \M_i \to \M_i$ for $i \in \{1,2\}$ be Markovian maps, the map $T_1 \ast T_2$
    extends to a Markovian map over $(\M_1 \ast \M_2, \tau_1 \ast \tau_1)$. Furthermore $T_1 \ast T_2$ is symmetric
    when the maps $T_i$ are.
    \item \label{itm:FreeProductExtension.2}
    Let $\N_i \subset \M_i$ be unital von Neumann subalgebras and let $\EE_i: \M_i \to \N_i \subset \M_i$
    be their asssociated conditional expectations, $\EE_1 \ast \EE_2: \M_1 \ast \M_2 \to \N_1 \ast \N_2 \subset \M_1 \ast \M_2$ is a conditional expectation. 
  \end{enumerate}
\end{theorem}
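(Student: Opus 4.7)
The strategy is to prove (ii) directly on the Hilbert-space level as a compression, then deduce (i) via Stinespring dilation, which realizes each Markovian $T_i$ as $V_i^* \rho_i(\cdot) V_i$ for a $\ast$-representation $\rho_i$; this reduces the general Markovian case to a free-product construction at the representation level, where unitality and $\ast$-homomorphism properties make everything tractable. This is essentially Boca's approach to free products of CP maps.

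\emph{Proof of (ii).} Each conditional expectation $\EE_i:\M_i \to \N_i$ preserves $\tau_i$, so it sends $\M_i^\circ$ into $\N_i^\circ := \N_i \cap \M_i^\circ$. On the GNS level, $\EE_i$ is implemented by the orthogonal projection $P_i: H_i \to H_{\N_i}$, where $H_{\N_i} = L_2(\N_i)$ embeds isometrically in $H_i$ with shared cyclic vector $\xi_i$. Forming free products of pointed Hilbert spaces produces an isometric inclusion $H_\N := \ast_i(H_{\N_i},\xi_i) \subset \ast_i(H_i,\xi_i) = H$, and the orthogonal projection $P:H \to H_\N$ splits diagonally across the alternating-tensor summands: it is the identity on $\CC \xi$ and $P_{i_1} \otimes \cdots \otimes P_{i_n}$ on $H_{i_1}^\circ \otimes \cdots \otimes H_{i_n}^\circ$. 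A direct check on pure tensors of $D$ yields $(\EE_1 \ast \EE_2)(x)\xi = P(x\xi)$, which identifies $\EE_1 \ast \EE_2$ with compression by $P$: it is normal, UCP, unital, idempotent, and its range is the copy of $\N_1 \ast \N_2$ inside $\M_1 \ast \M_2$. Tomiyama's theorem then promotes it to a conditional expectation.

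\emph{Proof of (i).} Apply Stinespring dilation to each $T_i$ to obtain a normal $\ast$-representation $\rho_i: \M_i \to \B(K_i)$ and an isometry $V_i: H_i \to K_i$ with $T_i(a) = V_i^* \rho_i(a) V_i$. The trace-preservation $\tau_i \circ T_i = \tau_i$ translates to $\langle V_i \xi_i, \rho_i(a) V_i \xi_i\rangle = \tau_i(a)$, so the vector state of $\rho_i$ at $\nu_i := V_i \xi_i$ equals $\tau_i$. Voiculescu's construction then yields a pointed Hilbert space $(K,\eta) = \ast_i(K_i,\nu_i)$ carrying a free-product representation $\rho = \rho_1 \ast \rho_2$ of the reduced free product $\M_1 \ast \M_2$ (the matching of vector states is precisely what makes the construction compatible with the reduced, not merely universal, free product). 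The restrictions $V_i|_{H_i^\circ}$ are isometries $H_i^\circ \to K_i^\circ$ (since $V_i \xi_i = \nu_i$ forces $V_i(H_i^\circ) \perp \nu_i$) and assemble across the alternating-tensor decomposition into an isometry $V: H \to K$ with $V\xi = \eta$. Define
\[
    (T_1 \ast T_2)(x) := V^* \rho(x) V, \qquad x \in \M_1 \ast \M_2.
\]
This is a normal UCP map that preserves $\tau_1 \ast \tau_2$ (as $V\xi = \eta$) and agrees on $D$ with the algebraic prescription of the statement (by iteratively applying $V_i^* \rho_i(\cdot) V_i = T_i$ on alternating pure tensors). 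For symmetry, if each $T_i$ is symmetric then the $L_2$-contraction $\hat T_i$ implementing $T_i$ on $H_i$ is self-adjoint and fixes $\xi_i$, so the operator on $H$ given by the identity on $\CC\xi$ and $\hat T_{i_1} \otimes \cdots \otimes \hat T_{i_n}$ on the alternating summands is self-adjoint and implements $T_1 \ast T_2$ on $L_2$; $\tau$-symmetry then follows by density of $D$.

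\emph{Main obstacle.} The delicate step is the algebraic verification that $V^*\rho(x)V$ matches the prescribed formula for $(T_1 \ast T_2)(x)$ on pure tensors $x = x_1 \otimes \cdots \otimes x_n$ with $x_k \in \M_{i_k}^\circ$ and $i_1 \neq \cdots \neq i_n$. One must unpack the action of $\rho_{i_1}(x_1) \cdots \rho_{i_n}(x_n)$ on $\eta \in K$, extracting mean-zero components relative to the vectors $\nu_{i_k}$ at each step and pairing them with the alternating-tensor image of $\xi$ under $V$. This is the combinatorial core of Boca's original argument and proceeds by induction on $n$, exploiting the alternating condition to avoid cancellations arising from consecutive factors landing in the same $\M_i$.
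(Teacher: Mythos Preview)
The paper does not supply its own proof of this theorem: it is stated as a citation to Choda's paper \cite{Choda1996} and used as a black box. Your sketch follows the standard Boca/Choda route (Stinespring dilation plus free product of pointed Hilbert spaces) and is correct in outline; this is precisely the argument in the cited reference, so there is nothing to compare against within the present paper.

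One point worth making explicit in your write-up: the formula $(T_1 \ast T_2)(x) := V^\ast \rho(x) V$ a priori yields an operator in $\B(H)$, not in $\M_1 \ast \M_2$. You implicitly close this gap by checking agreement with the algebraic prescription on the dense $\ast$-algebra $D$ (whose image lies in $D \subset \M_1 \ast \M_2$) and then invoking normality to extend; it would be cleaner to state that step openly. Likewise, the passage from ``vector states match'' to ``$\rho_1 \ast \rho_2$ descends to a \emph{normal} representation of the reduced von Neumann free product'' deserves one sentence of justification, since normality of the free product representation is what lets you pass from $D$ to the weak-$\ast$ closure.
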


% Group algebras
\subsection*{Group von Neumann algebras}
We end this section with another interesting example of finite von Neumann algebra that will be used in Section \ref{sec:F2}. Let $\Gamma$ be a discrete group and $\lambda: \Gamma \to \B(\ell_2 \Gamma)$ the left regular  representation, that is, the one given by $\lambda_g \delta_h = \delta_{g h}$, where $(\delta_h)_{h \in \Gamma}$ is the canonical orthogonal base of $\ell_2(\Gamma)$. We define the group von Neumann algebra $\L \Gamma$ by
\[
  \L \Gamma = \wstspan \{ \lambda_g \}_{g \in \Gamma} = \{\lambda_g \}_{g \in \Gamma}'' \subset \B(\ell_2 \Gamma).
\]
$\L \Gamma$ has a natural faithful trace $\tau: \L \Gamma \to \CC$ given by $\tau(f) = \langle \delta_e, f \, \delta_e \rangle$, see \cite{Ped1979}. It is worth noting that each $f \in \L \Gamma$ can be expressed as a sum
\[
   f = \sum_{g \in \Gamma} \widehat{f}(g) \, \lambda_g,
\]
where $\widehat{f}(g) = \tau(f \, \lambda_g^\ast)$. The series above converges in the weak-$\ast$ topology. The notation $\widehat{f}$ is reminiscent of the case of Abelian groups, where $\L \Gamma = L_\infty(\widehat{\Gamma})$, with $\widehat{\Gamma}$ being the Pontryagin dual of $\Gamma$. In this setting, $\widehat{f}$, as defined above, coincides with the Fourier transform for Abelian groups, see \cite[Chapter 4]{Foll1995}. 

In this article we will work over the algebra of the free group on two generators $\FF_2$. Let us denote the generators by $a$ and $b$ and let $\L \FF_2$ be its reduced group von Neumann algebra. Over $\FF_2$ there is a natural length $\omega \mapsto |\omega|$ given by
\[
  |\omega|  = \min \big\{ n : \omega = s_1 s_2 ... s_n \mbox{ with } s_i \in \{ a, a^{-1}, b, b^{-1} \} \big\}.
\]
It was shown in \cite{Haa1978} that the word length generates a Markovian and symmetric semigroup of operators $\S = (S_t)_{t \geq 0}$. Indeed, given any element $f \in \L \FF_2$ we define the maps $S_t: \L \FF_2 \to \L \FF_2$
given by
\[
  S_t \bigg( \sum_{\omega \in \FF_2} \widehat{f}(\omega) \, \lambda_\omega \bigg)
  = \sum_{\omega \in \FF_2} \widehat{f}(\omega) \, e^{- t |\omega|} \, \lambda_\omega.
\]

\section{\bf The limit maximal function \label{scn:limsup}}
In this section we will introduce the $L_p$-norm of the $\limsup$ operator for von Neumann algebras. Let us introduce the linear (nonclosed) space of bilaterally almost finite sequences $\L_\loc \subset L_p(\M;\ell_\infty) \subset L_1(\M;\ell_\infty)$. We will say that a sequence $(f_n)_n \in \L_\loc$ iff for every $\epsilon > 0$ there is a projection $e \in \Prj(\M)$ and an integer $N$ such that 
\begin{itemize}[leftmargin=1.2cm]
  \item $\tau(e) < \epsilon$.
  \item $e^\perp \, f_n \, e^\perp = 0$, for every $n > N$.
\end{itemize}
When working in $L_p(\M;\ell_\infty)$, the second condition can be interpreted by saying that $(e^\perp \, f_n \, e^\perp)_n \in L_p(\M;\ell_\infty^N) = L_{2p}(\M) \, (\ell_\infty^N \weaktensor \M) \, L_{2p}(\M)$.

It is relatively easy to see that $\L_\loc$ is a linear space. Indeed, it is trivial that $\L_\loc$ is closed under scalar products. To see that the space is closed under sums, assume that $(f_n)_n$ and $(g_n)_n$ are in $\L_\loc$ and take $\epsilon > 0$, by definition we can find $p,q \in \Prj(\M)$ and $N_1, N_2 \in \N$ such that 
\begin{itemize}[leftmargin=1.2cm]
  \item $\tau(p) < \epsilon/2$ and $\tau(q) < \epsilon / 2$.
  \item $p^\perp \, f_n \, p^\perp = 0$, for every $n > N_1$ and $q^\perp \, g_n \, q^\perp = 0$, for every $n > N_2$
\end{itemize}
we have that $e = p \vee q \in \Prj(\M)$ satisfies that $\tau(p \vee q) < \tau(p) + \tau(q) < \epsilon$ and that
$e^\perp = p^\perp \wedge q^\perp$, therefore
\[
  e^\perp \, (f_n + g_n) \, e^\perp
  = e^\perp \, f_n \, e^\perp + e^\perp \, g_n \, e^\perp
  = e^\perp \, (p^\perp \, f_n \, p^\perp) \, e^\perp + e^\perp \, (q^\perp \, g_n \, q^\perp) \, e^\perp = 0,
\]
for $n > \max \{ N_1, N_2 \}$.

\begin{definition}
  \label{def:Loclimsup}
  We define the space $L_p^\loc(\M;c_0)$ as the closure of $\L_\loc$ inside $L_p(\M;\ell_\infty)$.
  Similarly, we define
  \begin{equation}
    L_p^\loc(\M;\ell_\infty / c_0) = \frac{L_p(\M;\ell_\infty)}{L_p^\loc(\M;c_0)}
  \end{equation}
\end{definition}
Observe that given $(f_n)_n \in L_p(\M;\ell_\infty)$ the quotient map
$q: L_p(\M;\ell_\infty) \to L_p^\loc(\M;\ell_\infty/c_0)$ induces a seminorm on $L_p(\M;\ell_\infty)$ after composing with the norm of $L_p^\loc(\M;\ell_\infty/c_0)$, we will denote that seminorm as follows
\[
  \Big\| {\limsup_{n \to \infty}}^{+} f_n \Big\|_p := \| q (f_n) \|_{L_p^\loc[\ell_\infty/c_0]},
\]
and we will also omit the quotient map $q$ when no ambiguity can arise. Notice also that, as in the case of the supremum, the $\limsup^+$ is purely a notation. 

\begin{remark}
  \label{rmk:WhyNotQuotient}
  %\emph{
  Recall that, given a sequence of scalars $(a_n)_n$, we have
  \[
    \limsup_{n \to \infty} |a_n| = \| a_n \|_{\ell_\infty / c_0}.
  \]
  Therefore, it would be tempting to define the $L_p$-norm of the limsup as the seminorm associated with the quotient
  \[
    \frac{L_p(\M;\ell_\infty)}{L_p(\M;c_0)}.
  \]  
  Nevertheless, in the case of $p = 1$, such definition does not recovers the $L_1$-norm of the limsup, not even in the Abelian case. Indeed, notice that, for the sequence $(f_n)_n$ over $L_1([0,1])$, given by $f_n(t) = n \, \1_{[0,1/n]}(t)$, we have that
  \begin{eqnarray*}
    \big\| (f_n)_n \big\|_{\frac{L_1[\ell_\infty]}{L_1[c_0]}}
     & = & \inf_{N} \big\{ \big\| (f_n)_{n > N} \big\|_{L_1[\ell_\infty]} \big\}\\
     & = & \inf_{N} \Big\{ \Big\| \sup_{n > N} n \1_{[0,1/n]} \Big\|_1 \Big\}
     \quad = \quad \inf_{N} \int_0^{\frac1{N+1}} \Big\lceil \frac1{t} \Big\rceil \,d t = \infty.
  \end{eqnarray*}
  While the limsup of the sequence $f_n$ is $0$ almost everywhere. In this case, the sequence $f_n$ does not converge in the $L_1$-norm, but there are sequences of functions $f_n \in L_1([0,1])$ such that $f_n$ converge both in $L_1$-norm and almost everywhere and yet the seminorm associated to $L_1[\ell_\infty]/L_1[c_0]$ diverges. Such family of functions can be constructed by choosing an integrable function $f \in L_1([0,1])$ such that their truncated Hardy-Littlewood maximal function
  \[
    M(f)(t)
      = \sup_{0 < r \leq 1} M_r(f)(t)
      = \sup_{0 < r \leq 1} \frac1{r} \int_{t}^{t + r} |f(s)| \, d s
  \]
  is not in $L_1$ and then taking $f_n$ as a sequence of averages $M_{r_n}(f)$,
  with $r_n \to \infty$.
  
  To understand what is happening, recall that the Banach space functor $E \mapsto L_1(\Omega) \otimes_{\pi} E \cong L_1(\Omega; E)$, where $\otimes_\pi$ is the projective tensor product in the category of Banach spaces \cite{Ryan2013},  is both a projective and an injective functor. Therefore, we have
  \[
    L_1(\Omega) \otimes_\pi \ell_\infty/c_0
    =
    \frac{L_1(\Omega) \otimes_\pi \ell_\infty}{L_1(\Omega) \otimes_\pi c_0},
  \]
  for a measure space $\Omega$. But the nonseparability of $\ell_\infty/c_0$ makes $L_1(\Omega) \otimes_\pi \ell_\infty/c_0$ strictly smaller than the space of all measurable functions $F: \Omega \to \ell_\infty/c_0$ such that
  \[
    \theta \mapsto \| F(\theta) \|_{\ell_\infty/c_0}
    = \limsup_{n \to \infty} |f_n(\theta)|,
  \]
  is integrable, where $(f_n(\theta))_n$ is a representative of $F(\theta)$. This is due to the fact the equivalence between $L_1(\Omega) \otimes_\pi E$ and the measurable $E$-valued functions with integrable $E$-norm requires using Pettis' theorem, see \cite[Section 2.3]{Ryan2013}, which may fail in the nonseparable case.
  %}
\end{remark}

There is an alternative characterization of the norm of $L_p^\loc(\M;\ell_\infty/c_0)$ that would be useful afterwards

\begin{proposition}
  \label{prp:AltNorm}
  We have that
  \begin{equation}
    \label{eq:Altnorm}
    \Big\| {\limsup_{n \to \infty}}^+ f_n \Big\|_{p}
    = 
    \sup_{\epsilon > 0}
    \, \inf_{\substack{ e \in \Prj(\M) \\ \tau(e) < \epsilon }}
    \, \inf_{0 < N} \big\| (e^\perp \, f_n \, e^\perp)_{N < n} \big\|_{L_p[\ell_\infty]}
  \end{equation}
\end{proposition}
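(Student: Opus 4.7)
The plan is to prove the two inequalities separately. For LHS $\geq$ RHS, start from any $(g_n) \in \L_\loc$: by definition, for every $\epsilon > 0$ one finds $(e_\epsilon, N_\epsilon)$ with $\tau(e_\epsilon) < \epsilon$ and $e_\epsilon^\perp g_n e_\epsilon^\perp = 0$ for $n > N_\epsilon$. Combining the bimodule property \eqref{eq:bimoduleProp} with the monotonicity \eqref{eq:monotonicLpLinfty},
\[
\|(f_n - g_n)\|_{L_p[\ell_\infty]} \,\geq\, \big\|\big(e_\epsilon^\perp (f_n - g_n) e_\epsilon^\perp\big)_{n > N_\epsilon}\big\|_{L_p[\ell_\infty]} \,=\, \big\|\big(e_\epsilon^\perp f_n e_\epsilon^\perp\big)_{n > N_\epsilon}\big\|_{L_p[\ell_\infty]}.
\]
Taking the infimum over admissible $(e, N)$ with $\tau(e) < \epsilon$, the supremum over $\epsilon$, and the infimum over $(g_n) \in \L_\loc$, then extending by density of $\L_\loc$ in $L_p^\loc[c_0]$, yields $\|\limsup^+ f_n\|_p \geq \mathrm{RHS}$.

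For LHS $\leq$ RHS, I construct a near-optimal $(h_n) \in \L_\loc$. Given $\delta > 0$, for each $k \geq 1$ pick $e_k^{(0)} \in \Prj(\M)$ with $\tau(e_k^{(0)}) < 2^{-k-1}$ and $N_k^{(0)}$ such that $\|(e_k^{(0)\perp} f_n e_k^{(0)\perp})_{n > N_k^{(0)}}\|_{L_p[\ell_\infty]} \leq \mathrm{RHS} + 2^{-k}$. To obtain a nested family, set $e_k := \bigvee_{j \geq k} e_j^{(0)}$; this is decreasing with $\tau(e_k) \leq \sum_{j \geq k} 2^{-j-1} \leq 2^{-k}$, and $e_k \geq e_k^{(0)}$ forces $e_k^\perp = e_k^\perp e_k^{(0)\perp}$, so \eqref{eq:bimoduleProp} preserves the tail estimate with $N_k := N_k^{(0)}$. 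Define $h_n = f_n$ for $n \leq N_1$ and $h_n = f_n - e_k^\perp f_n e_k^\perp$ for $N_k < n \leq N_{k+1}$. Membership $(h_n) \in \L_\loc$ follows from the nesting: given $\epsilon > 0$, choose $K$ with $\tau(e_K) < \epsilon$; since $e_K^\perp \leq e_k^\perp$ for every $k \geq K$, the absorption identity $e_K^\perp e_k^\perp = e_K^\perp$ gives $e_K^\perp h_n e_K^\perp = e_K^\perp f_n e_K^\perp - e_K^\perp f_n e_K^\perp = 0$ for every $n > N_K$.

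The main obstacle I anticipate is the norm estimate $\|(f_n - h_n)\|_{L_p[\ell_\infty]} \leq \mathrm{RHS} + \delta$: on block $k$ one has $(f_n - h_n) = e_k^\perp f_n e_k^\perp$ with tail-norm $\leq \mathrm{RHS} + 2^{-k}$, but na\"ively summing these block bounds diverges. My approach is to exploit the per-block factorizations $e_k^\perp f_n e_k^\perp = \alpha_k G_{n,k} \beta_k$ coming from the definition of $L_p(\M; \ell_\infty)$ and assemble a single global factorization $(f_n - h_n) = \alpha G_n \beta$ out of them, using the decreasing envelope structure of the $e_k$'s to coordinate the $L_{2p}$-pieces. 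The diagonal contribution reduces to the block-$K$ bound $\mathrm{RHS} + 2^{-K}$ via the same absorption identity, while the off-diagonal contributions across blocks live in the shrinking corner determined by $(e_k - e_{k+1})$ and are controlled through the asymmetric row/column decomposition of the mixed-norm space combined with $\tau(e_K) \to 0$. Letting $K \to \infty$ yields $\mathrm{RHS} + O(\delta)$. Making this gluing precise---so that the accumulated off-diagonal error remains summable rather than accumulating across blocks---is the key technical hurdle.
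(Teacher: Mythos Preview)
Your $\geq$ direction is correct and coincides with the paper's argument (bimodule property \eqref{eq:bimoduleProp} plus index monotonicity \eqref{eq:monotonicLpLinfty}). For $\leq$ you take a genuinely different and harder route, and the obstacle you flag at the end is real and unresolved.

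The paper does not glue across scales at all. It first rewrites
\[
\inf_{(h_n)\in\L_\loc}\|(f_n-h_n)\|_{L_p[\ell_\infty]}
\;=\;\sup_{\varepsilon>0}\ \inf_{\substack{e\in\Prj(\M)\\ \tau(e)<\varepsilon}}\ \inf_{N}\ \inf\Big\{\|(f_n-h_n)\|_{L_p[\ell_\infty]}:e^\perp h_ne^\perp=0\ \forall n>N\Big\},
\]
invoking the monotonicity in $\varepsilon$ of the index sets, and then evaluates the \emph{innermost} infimum for one fixed pair $(e,N)$ by the single choice $h_n=f_n$ for $n\le N$ and $h_n=ef_ne+ef_ne^\perp+e^\perp f_ne$ for $n>N$. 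This gives $\|(f_n-h_n)\|_{L_p[\ell_\infty]}=\|(e^\perp f_ne^\perp)_{n>N}\|_{L_p[\ell_\infty]}$ on the nose. Because the $\sup_\varepsilon$ sits \emph{outside}, the approximant $h$ is only required to satisfy the constraint for that particular $(e,N)$; one never needs a sequence that works simultaneously for every $\varepsilon$.

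Your construction does land in $\L_\loc$, but the block-wise norm estimate does not close. Merging factorizations $e_k^\perp f_ne_k^\perp=\alpha_kG_{n,k}\beta_k$ across $k$ into a single $\alpha G_n\beta$ would require uniform control on $\|\alpha\|_{2p},\|\beta\|_{2p}$ that the condition $\tau(e_K)\to0$ alone does not supply, and the off-diagonal pieces living under $e_k-e_{k+1}$ carry no smallness in operator norm---only their trace is small, which is useless for an $L_p[\ell_\infty]$ bound. In effect you are attempting to reprove, by an explicit construction, exactly the displayed identity that the paper isolates as an intermediate step and dispatches in one line; your argument stalls precisely where that content lies.
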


\begin{proof}
  By definition the $\limsup^+$ is given by 
  \[
    \Big\| {\limsup_{n \to \infty}}^+ f_n \Big\|_p
      = \inf \Big\{ \| (f_n - h_n)_n \|_{L_p[\ell_\infty]} : (h_n)_n \in L_p^\loc(\M;c_0) \Big\}
  \]
  but, by the definition of $L_p^\loc(\M;c_0)$, the linear subspace $\L_\loc$ is dense,
  therefore we can take the infimum of the right hand side in $(h_n)_n \in \L_\loc$. The space $\L_\loc$ can be expressed as 
  \[
    \L_\loc
    =
    \bigcap_{\epsilon > 0} \,
    \bigcup_{\substack{e \in \Prj(\M) \\ \tau(e) < \epsilon}} \,
    \bigcup_{N \in \NN} \,
    \big\{ (h_n)_n : e^\perp \, h_n \, e^\perp = 0, \forall n > N \big\}.
  \]
  Using the monotonicity of the infimum over the index sets we obtain that
  \[
    \Big\| {\limsup_{n \to \infty}}^+ f_n \Big\|_p
    = \sup_{\epsilon > 0} \,
      \inf_{\substack{e \in \Prj(\M) \\ \tau(e) < \epsilon}} \,
      \inf_{N} \,
      \inf \Big\{ \, \| (f_n - h_n)_n \|_{L_p[\ell_\infty]}
                  : e^\perp \, h_n \, e^\perp = 0, \forall n > N \Big\}.
  \]
  Fix $\epsilon > 0$, $e$ and $N$. In the innermost infimum we can choose $h_n$ as 
  \[
    h_n =
    \begin{cases}
      f_n & \mbox{ when } n \leq N\\
      e \, f_n \, e + e \, f_n \, e^\perp + e^\perp \, f_n \, e & \mbox{ when } n > N
    \end{cases}
  \]
  and a calculation gives that
  \[
    \| f_n - h_n \|_{L_p[\ell_\infty]} = \big\| (e^\perp \, f_n \, e^\perp)_{n>N} \big\|_{L_p[\ell_\infty]}.
  \]
  This implies the inequality $(\leq)$ in equation \eqref{eq:Altnorm}.
  %The other is proved similarly.
  For the other direction we will need to use the module properties \eqref{eq:bimoduleProp} and \eqref{eq:monotonicLpLinfty}. Fix $\epsilon > 0$, $e \in \Prj(\M)$ with $\tau(e) < \epsilon$ and $N$ an integer. We have that
  \[
    \big\| (e^\perp \, f_n \, e^\perp)_{n > N} \big\|_{L_p[\ell_\infty]}
    =
    \big\| (e^\perp \, (f_n - h_n) \, e^\perp)_{n > N} \big\|_{L_p[\ell_\infty]}
  \]
  for every $h_n$ satisfying that $e^\perp \, h_n \, e^\perp \in L_p[\ell_\infty^N]$.
  Therefore, we can take an infimum in such identity to obtain that
  \begin{equation*}
    \begin{array}{r>{\displaystyle}cl>{\displaystyle}r}
      \big\| (e^\perp \, f_n \, e^\perp)_{n > N} \big\|_{L_p[\ell_\infty]}
        & = & \inf \Big\{ \, \big\| (e^\perp \, (f_n - h_n) \, e^\perp)_{n > N} \big\|_{L_p[\ell_\infty]} : e^\perp \, h_n \, e^\perp = 0, \forall n > N \Big\} & \\
        & \leq & \inf \Big\{ \, \big\| (e^\perp \, (f_n - h_n) \, e^\perp)_{n} \big\|_{L_p[\ell_\infty]} : e^\perp \, h_n \, e^\perp = 0, \forall n > N \Big\} & \mbox{ by \eqref{eq:monotonicLpLinfty}}\\
        & \leq & \inf \Big\{ \, \big\| (f_n - h_n)_{n} \big\|_{L_p[\ell_\infty]} : e^\perp \, h_n \, e^\perp = 0, \forall n > N \Big\} & \mbox{ by \eqref{eq:bimoduleProp}}
    \end{array}
  \end{equation*}
  and that concludes the proof.
\end{proof}

Recall that, whenever needed, it is possible to describe a natural operator space structure for the spaces $L_p^\loc(\M;\ell_\infty/c_0)$. Observe that the functor $E \mapsto S_p^m[E]$ is projective, therefore, we can identify the following spaces
\[
  S_p^m \Big[ \frac{L_p(\M;\ell_\infty)}{L_p^\loc(\M;c_0)} \Big]
  \, = \,
  \frac{S_p^m \big[ L_p(\M;\ell_\infty) \big]}{S_p^m \big[ L_p^\loc(\M;c_0) \big]}
  \, = \,
  \frac{L_p(M_m \otimes \M;\ell_\infty)}{S_p^m \big[ L_p^\loc(\M;c_0) \big]}
\]
isometrically. We can represent $L^\loc_p(\M;c_0)$ as the closure of
\[
  \bigcap_{\epsilon > 0} \, \bigcup_{\substack{e \in \Prj(\M)\\ \tau(e) < \epsilon}} \, \bigcup_{N \in \NN} \, e^\perp \, L_p(\M;\ell_\infty^N) \, e^\perp
\]
But using the fact that 
\[
  S_p^m[ e^\perp \, L_p(\M; \ell_\infty^N) \, e^\perp ] = e^\perp \, L_p(M_m \otimes \M; \ell_\infty^N) \, e^\perp,
\]
we obtain that
\[
  S_p^m \big[ L_p^\loc(\M; \ell_\infty/c_0) \big]
  \, = \,
  L_p^\loc(M_m \otimes \M; \ell_\infty/c_0).
\]
The following two propositions link the boundedness of the noncommutative
$\limsup$ with the bilateral almost uniform convergence.

\begin{proposition}
  \label{prp:BAUImpLimsup}
  Let $(f_n)_n \subset L_p(\M)$ is a sequence such that $f_n \to f$ bilaterally almost uniformly we have that
  \begin{equation}
    \label{eq:BAUImpLimsup}
    \Big\| {\limsup_{n \to \infty}}^+ f_n \Big\|_p = \| f \|_p.
  \end{equation}
\end{proposition}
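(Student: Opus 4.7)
The plan is to reduce the statement to computing the quotient norm of the constant sequence $(f)_n$, by first showing that the difference $(f_n - f)_n$ sits in $L_p^\loc(\M; c_0)$.

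For the first reduction, applying Proposition~\ref{prp:AltNorm} to the sequence $(f_n - f)_n$ expresses its quotient seminorm as
\[
  \sup_{\varepsilon > 0} \, \inf_{\substack{e \in \Prj(\M)\\ \tau(e) < \varepsilon}} \, \inf_{N \geq 1} \, \big\|(e^\perp(f_n - f)e^\perp)_{n > N}\big\|_{L_p[\ell_\infty]}.
\]
Fix $\varepsilon > 0$. The b.a.u.\ hypothesis produces a projection $e$ with $\tau(e) < \varepsilon$ and $\|e^\perp(f_n - f)e^\perp\|_\infty \to 0$. The trivial factorization $a_n = \1 \cdot a_n \cdot \1$ combined with $\|\1\|_{2p} = 1$ yields $\|(a_n)_n\|_{L_p[\ell_\infty]} \leq \sup_n \|a_n\|_\infty$, so the innermost infimum tends to $0$ as $N \to \infty$. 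Thus $(f_n - f)_n$ has vanishing quotient seminorm, i.e. $(f_n - f)_n \in L_p^\loc(\M; c_0)$, and under the quotient map $(f_n)_n$ is identified with the constant sequence $(f)_n$.

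For the second step, Proposition~\ref{prp:AltNorm} applied to $(f)_n$, together with the identity $\|(e^\perp f e^\perp)_{n > N}\|_{L_p[\ell_\infty]} = \|e^\perp f e^\perp\|_p$ for constant sequences, reduces the quotient norm to $\sup_{\varepsilon > 0} \inf_{\tau(e) < \varepsilon} \|e^\perp f e^\perp\|_p$. The choice $e = 0$ gives the upper bound $\leq \|f\|_p$. For the reverse inequality, the identity $f = e^\perp f e^\perp + ef + fe - efe$ and the triangle inequality yield
\[
  \|e^\perp f e^\perp\|_p \, \geq \, \|f\|_p - 2\|ef\|_p - \|efe\|_p \, \geq \, \|f\|_p - 3\|ef\|_p,
\]
reducing the matter to proving the uniform absolute continuity $\sup_{\tau(e) < \varepsilon} \|ef\|_p \to 0$ as $\varepsilon \to 0^+$.

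This last estimate is what I expect to be the main technical obstacle. It would be handled by a spectral truncation along the polar decomposition $f = u|f|$: for each $M > 0$, split $f = u|f|\1_{[0,M]}(|f|) + u|f|\1_{(M,\infty)}(|f|)$. The bounded part contributes at most $M\,\tau(e)^{1/p}$ to $\|ef\|_p$ by H\"older's inequality, while the tail has $L_p$-norm that vanishes as $M \to \infty$ by dominated convergence in $L_p(\M)$. Choosing $M$ large and then $\varepsilon$ small produces the desired uniform vanishing, which gives the lower bound $\|f\|_p$ and completes the proof.
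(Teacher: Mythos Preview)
Your proof is correct and follows the same route as the paper: both show that b.a.u.\ convergence of $f_n - f$ to $0$ forces the $\limsup^+$ seminorm to vanish via the bound $\|(a_n)\|_{L_p[\ell_\infty]} \le \sup_n \|a_n\|_\infty$ (valid because $(\M,\tau)$ is finite), reducing to the constant sequence $(f)_n$. Your argument is actually more complete than the paper's, which leaves the identity $\|{\limsup_n}^+ f\|_p = \|f\|_p$ for constant sequences implicit; one small slip is that $\|fe\|_p = \|e f^\ast\|_p$ need not equal $\|ef\|_p$ when $f \neq f^\ast$, but your truncation argument applies equally to $f^\ast$ and fixes this immediately.
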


\begin{proof}
  The proof is almost trivial. Start by noticing that in the case of finite von Neumann algebras we have that
  \[
    \| (f_n)_n \|_{L_p[\ell_\infty]} \leq \, \| (f_n)_n \|_{\ell_\infty \weaktensor \M}.
  \]
  Therefore if $g_n \to 0$ b.a.u. we have that
  \[
    \sup_{\epsilon > 0} \,
    \inf_{\substack{e \in \Prj(\M) \\ \tau(e) < \epsilon}} \,
    \inf_{N \in \NN} \,
    \big\| (e^\perp \, g_n \, e^\perp)_{n > N} \big\|_{L_1[\ell_\infty]}
    \leq 
    \sup_{\epsilon > 0} \,
    \inf_{\substack{p \in \Prj(\M) \\ \tau(p) < \epsilon}} \,
    \inf_{N \in \NN} \,
    \big\| (p^\perp \, g_n \, p^\perp)_{n > N} \big\|_{\ell_\infty \weaktensor \M}
  \]
  and the last expression tends to $0$. Taking $g_n = f - f_n$ gives
  \eqref{eq:BAUImpLimsup}.
\end{proof}

\begin{remark}
  \label{rmk:semifinite} 
  In the previous proposition we impose the condition that $(\M,\tau)$ must be finite.
  There are pathologies that can occur in the semifinite case.
  For instance the sequence $f_n = \1_{[n,n+1]}$ converge pointwise
  to zero and is uniformly bounded inside $L_1(\RR_+)$.
  Nevertheless, if $\epsilon > 0$ is a finite number and $E \subset \RR_+$
  is a measurable set such that $m(E) < \epsilon$,
  then $(\1_{E^c} f_n)_{N < n}$ is not in $L_1(\RR_+;\ell_\infty)$ for any $N$.
\end{remark}

\begin{proposition}
  \label{prp:LimsupImpBAU}
  If $(S_n)_n$ is a bounded family of
  operators  $S_n: L_1(\M) \to L_1(\M)$ such that
  \begin{enumerate}[leftmargin=1.2cm,label={\rm (\roman*)}, ref={\rm (\roman*)}]
    %\item \label{itm:LimsupImpBAU.1} There is a $F:L_1(\M) \to L_1(\M)$ such that $\|S_n(f) - F(f)\|_1 \to 0$, $\forall f \in L_1(\M)$.
    \item \label{itm:LimsupImpBAU.2} 
    There is a bounded normal $F: \M \to \M$
    $\|S_n(f) - F(f)\|_\infty \to 0$, $\forall f \in \S$, a class $\S \subset L_1(\M) \cap \M$
    dense in $L_\Phi(\M)$   
    \item \label{itm:LimsupImpBAU.3} For every $f \in L_\Phi(\M)$, we have
    \[
      \Big\| {\limsup_{n \to \infty}}^{+} \, S_n(f) \Big\|_{1} \lesssim \| f \|_{L_\Phi}.
    \]
  \end{enumerate}
  Then, for every $f \in L_\Phi$, $S_n(f) \to F(f)$ bilaterally almost uniformly.
\end{proposition}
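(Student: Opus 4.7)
The plan is to follow the classical Banach principle: combine the $\limsup^+$-bound (ii) with the pointwise convergence (i) on the dense class $\S$ to upgrade to b.a.u. convergence on all of $L_\Phi(\M)$. As a preliminary, since $(S_n)$ is a bounded family on $L_1(\M)$ and $S_n(g) \to F(g)$ in $\|\cdot\|_\infty$ (hence in $\|\cdot\|_1$) for every $g \in \S$, the density of $\S$ in $L_1(\M)$ together with the uniform $L_1$-bound extend $F$ to a bounded operator on $L_1(\M) \supset L_\Phi(\M)$ with $\|F(g)\|_1 \lesssim \|g\|_{L_\Phi}$.

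My first step is to show $\|\limsup^+_n(S_n(f) - F(f))\|_1 = 0$ for every $f \in L_\Phi(\M)$. Pick $f_k \in \S$ with $\|f - f_k\|_{L_\Phi} \to 0$ and decompose
\[
  S_n(f) - F(f) \, = \, S_n(f - f_k) \, + \, \big( S_n(f_k) - F(f_k) \big) \, + \, F(f_k - f).
\]
Applying the $\|\limsup^+\|_1$-seminorm to each summand: the first is $\lesssim \|f - f_k\|_{L_\Phi}$ directly from (ii); the second vanishes because $S_n(f_k) \to F(f_k)$ in $\|\cdot\|_\infty$ is b.a.u. (with $e = 0$), so Proposition \ref{prp:BAUImpLimsup} applies; the third is a constant-in-$n$ sequence, for which formula \eqref{eq:Altnorm} gives $\|\limsup^+_n c\|_1 = \|c\|_1$, bounding it by $\|F(f_k-f)\|_1 \lesssim \|f_k-f\|_{L_\Phi}$. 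Letting $k \to \infty$ proves the claim.

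It remains to upgrade $\|\limsup^+_n g_n\|_1 = 0$, with $g_n := S_n(f) - F(f)$, into b.a.u. convergence. Fix $\varepsilon > 0$. Since $(g_n) \in L_1^\loc(\M; c_0)$, which is the closure of $\L_\loc$ in $L_1[\ell_\infty]$, for each $k \ge 1$ I find $(H_{k,n}) \in \L_\loc$ with $\|(g_n - H_{k,n})\|_{L_1[\ell_\infty]} \le \varepsilon \cdot 4^{-k}$ and, unfolding the definition of $\L_\loc$, extract $\hat e_k \in \Prj(\M)$ with $\tau(\hat e_k) < \varepsilon \cdot 2^{-k-1}$ and an integer $N_k$ with $\hat e_k^\perp H_{k,n} \hat e_k^\perp = 0$ for $n > N_k$. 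Combined with the bimodule property of $L_1[\ell_\infty]$, this yields $\|(\hat e_k^\perp g_n \hat e_k^\perp)_{n > N_k}\|_{L_1[\ell_\infty]} \le \varepsilon \cdot 4^{-k}$. Unfolding the factorization $L_1[\ell_\infty] = L_2(\M) \cdot (\ell_\infty \weaktensor \M) \cdot L_2(\M)$ writes $\hat e_k^\perp g_n \hat e_k^\perp = \alpha_k h_{k,n} \beta_k$ for $n > N_k$ with $\|\alpha_k\|_2, \|\beta_k\|_2 \lesssim \sqrt\varepsilon \cdot 2^{-k}$ and $\sup_n\|h_{k,n}\|_\infty \le 1$. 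Chebyshev-type spectral cuts of $|\alpha_k^\ast|$ and $|\beta_k|$ at level $M_k := 2^{-k/2}$ then yield $\tilde e_k \in \Prj(\M)$ with $\tau(\tilde e_k) \lesssim \varepsilon \cdot 2^{-k}$ and $\|\tilde e_k^\perp \alpha_k\|_\infty,\,\|\beta_k \tilde e_k^\perp\|_\infty \le M_k$. Setting $e_k = \hat e_k \vee \tilde e_k$ and $e = \bigvee_k e_k$ (rescaling the initial $\varepsilon$ to absorb the hidden constants) gives $\tau(e) < \varepsilon$ and
\[
  \| e^\perp g_n e^\perp \|_\infty \, \le \, M_k^2 \, = \, 2^{-k} \qquad \text{for all } k \text{ and } n > N_k,
\]
so $\limsup_n \| e^\perp g_n e^\perp \|_\infty = 0$, which is precisely the desired b.a.u. convergence.

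The main obstacle is this last step: converting the $L_1[\ell_\infty]$-smallness granted by the $\limsup^+$-bound into $L_\infty$-smallness of the compressed sequence. It combines the factorization of $L_1[\ell_\infty]$ with a noncommutative Chebyshev inequality applied to the $L_2$-factors, and the delicate point is coordinating the parameters $\delta_k$ and $M_k$ so that both $\sum_k \tau(e_k) \lesssim \varepsilon$ and $M_k^2 \to 0$ hold simultaneously.
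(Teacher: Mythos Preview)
Your proof is correct and uses essentially the same ingredients as the paper's: density of $\S$, the triangle inequality for the $\limsup^+$ seminorm applied to the splitting $S_n(f)-F(f) = S_n(f-g) + (S_n(g)-F(g)) + F(g-f)$, the factorization $L_1[\ell_\infty] = L_2 \cdot (\ell_\infty \weaktensor \M) \cdot L_2$, and a Chebyshev spectral cut to pass from small $L_1[\ell_\infty]$-norm to small $L_\infty$-norm on a large corner. The only organizational difference is that you first establish $\big\|\limsup^+_n (S_n(f)-F(f))\big\|_1 = 0$ and then run an explicit diagonal argument over $k$ to produce a single projection $e$, whereas the paper produces, for each pair $(\epsilon,\delta)$, a projection $e$ and an integer $N$ with $\|e^\perp T_n(f)e^\perp\|_\infty < 2\delta$ for $n>N$, leaving the passage to a $\delta$-independent projection implicit; your version makes that last step explicit, which is a minor improvement in completeness.
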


\begin{proof}
  Fix, $f \in L_\Phi(\M)$. We want to prove that, for every $\epsilon > 0$ and $\delta > 0$, there is an integer $N$ and projection $e \in \Prj(\M)$ such that $\tau(e) < \epsilon$ and 
  \[
    \big\| e^\perp \, (S_n(f) - F(f)) \, e^\perp \big\|_\infty < \delta
  \]
  for every $n > N$.
  Denote $S_n - F$ by $T_n$ and denote by $C$ the optimal quantity satisfying that
  \[
    \Big\| {\limsup_{n \to \infty}}^+ T_n(f) \Big\|_1 \leq C \, \| f \|_\Phi.
  \]
  By the density of $\S$ inside $L_\Phi(\M)$ we can choose $g \in \S$ such that
  $\| f - g \|_\Phi < \delta_0$, where $\delta_0 < \delta \, \epsilon / (8 C)$.
  Now, we have that
  \[
    T_n(f) = T_n(f - g) + T_n(g)
  \]
  and the second terms tends to $0$ in the $L_\infty$-norm,
  therefore there is an integer $N_1$ such that $\| T_n(g) \|_\infty < \delta$ for every $n > N_1$.
  For the first term we do have that 
  \[
    \Big\| {\limsup_{n \to \infty}}^+ T_n(f - g) \Big\|_1 \leq C \, \delta_0.
  \]
  But by \eqref{eq:L1Linfty} we do have that there is a projection $p \in \Prj(\M)$
  such that $\tau(p) < \epsilon / 2$  and an integer $N_0$ such that
  \[
    \big\| p^\perp \, (T_n(f - g))_{n > N_0} \, p^\perp \big\|_{L_1[\ell_\infty]} \leq 2 C \delta_0.
  \]
  Therefore, we can factor the term above as $p^\perp \, (T_n(f - g)) \, p^\perp = \alpha \, h_n \, \beta$, 
  where $\| \alpha \|_2 = \| \beta \|_2 = \sqrt{2 C \delta_0}$ and $\| h_n \|_{\ell_\infty \otimes \M} = 1$
  Fix a $\lambda > 0$, to be determined later, and define the projections
  \[
    r = \1_{[\lambda,\infty)}(\alpha \alpha^\ast)
    \quad \mbox{ and } \quad 
    q = \1_{[\lambda,\infty)}(\beta^\ast \beta).
  \]
  By, Chebishev's inequality we have that
  \[
    \tau(r) \leq \frac{\| \alpha \|_2^2}{\lambda}
    \quad \mbox{ and } \quad
    \tau(q) \leq \frac{\| \beta \|_2^2}{\lambda}.
  \]
  Taking $\lambda = \delta$ gives that $e = r \vee q \vee p$ satisfies that
  \[
    \tau(e) \leq \frac{\epsilon}{2} + \frac{4 C \delta_0}{\lambda} = \frac{\epsilon}{2} + \frac{\delta \epsilon}{ 2 \, \delta} = \epsilon
  \]
   but we also have that
   \begin{eqnarray*}
     \big\| e^\perp \, T_n(f - g) \, e^\perp \big\|_\infty
       & = & \big\| e^\perp \, p^\perp \, T_n(f - g) \, p^\perp \, e^\perp \big\|_\infty\\
       & = & \big\| e^\perp \, \alpha \, h_n \, \beta \, e^\perp \big\|_\infty\\
       & \leq & \big\| e^\perp \, \alpha \big\|_\infty \,
                \Big( \sup_n \big\| h_n  \big\|_\infty \Big) \,
                \big\| \beta \, e^\perp \big\|_\infty \quad < \quad \delta^\frac12 \delta^\frac12.
  \end{eqnarray*}
  Taking $N = \max \{N_0, N_1\}$ and $e \in \Prj(\M)$ gives that
  \[
    \big\| e^\perp \, (F(f) - S_n(f)) \, e^\perp \big\|_\infty < \delta + \delta
  \]
  for every $n > N$ and that ends the proof.
\end{proof}

Throughout the text, we will need to work also with limits in more than one index.
We will use the convention that, for scalar sequences $(a_{n,m})_{n,m}$
\begin{equation}
  \limsup_{n,m \to \infty} |a_{n,m}| = \inf_{0 \leq N,M} \sup_{n \geq N, m \geq M} |a_{n,m}|.
\end{equation}
This notion of $\limsup$ is chosen in order to avoid the case in which the 
pair $(n,m)$ tends to infinite, in the sense of escaping of every finite set,
while one of the coordinates $n$ or $m$ does not. For any given any pair
of sequences $n(j)$ and $m(j)$, both of them tending to infinity, we have that 
\[
  \limsup_{j \to \infty} |a_{n(j),m(j)}| \leq \limsup_{n,m \to \infty} |a_{n,m}|
\]
and the inequality is tight in the sense that we can make those quantities the
same by choosing adequate subsequences $n(j)$ and $m(j)$ as above. That $\limsup$ can be
represented as a quotient norm as follows. Let $L_{N,M} \subset \ell_\infty(\NN \times \NN)$
be the following space
\[
  L_{N,M} = \ell_\infty^N \weaktensor \ell_\infty + \ell_\infty \weaktensor \ell_\infty^M
\]
We define $c_\Delta(\NN \times \NN)$ as the norm closure of all $L_{N,M}$
\begin{equation*}
  c_\Delta(\NN \times \NN)
  = \overline{{\spn}^{\| \cdot \|_\infty}} \big\{ L_{N,M} : N, M \big\}
  \subset \ell_\infty(\NN \times \NN).
\end{equation*}
We have that
\begin{equation}
  \limsup_{n, m \to \infty} |a_{n,m}|
  =
  \| (a_{n,m})_{n,m} \|_{\ell_\infty(\NN \times \NN) / c_\Delta(\NN \times \NN)}
\end{equation}

In a way analogous to what we did before. We are going to define a nonclosed linear subspace $\L^\Delta_\loc \subset L_p(\M;\ell_\infty(\NN \times \NN))$ given those sequences $(f_{n,m})_{n,m}$ such that, for every $\epsilon > 0$ there is a projection $p \in \Prj(\M)$ and some integers $N,M$ such that
\begin{itemize}[leftmargin=1.2cm]
  \item $\tau(p) < \epsilon$
  \item $p^\perp \, f_{n,m} \, p^\perp = 0$ whenever $n > N$ and $m > M$.
\end{itemize}
Observe that the second condition can be expressed as saying that $p^\perp \, f_{n,m} \, p^\perp \in L_p(\M;L_{N,M})$.
We define the space $L_p^\loc(\M;c_\Delta(\NN \times \NN))$ as the norm closure of $\L^\Delta_\loc$ inside $L_p(\M;\ell_\infty)$. We will omit the dependency on the iterated products of the integers $\NN \times \NN$, when they can be understood from the context. We define
\[
  L_p^\loc(\M; \ell_\infty/c_\Delta)
  = \frac{L_p(\M; \ell_\infty)}{L_p^\loc(\M;c_\Delta)}
\]
We will denote the seminorm induced in $L_p(\M;\ell_\infty)$ by the above quotient as
\[
  \Big\| {\limsup_{n,m \to \infty}}^+ f_{n,m} \Big\|_p := \big\| (f_{n,m})_{n,m} \big\|_{L_p^\loc[\ell_\infty/c_\Delta]}
\]
Similarly, we can define multi-indexed versions of the above $\limsup$ and express them as a quotient. The operator space structures of the spaces $L_p^\loc(\M;\ell_\infty/c_\Delta)$ are defined in the same way. The analogues of Proposition \ref{prp:AltNorm} also hold true, as well as the connections with the bilateral almost uniform convergence in more than one variable in Propositions \ref{prp:BAUImpLimsup} and \ref{prp:LimsupImpBAU}.

In the next section we will use the following lemma, whose proof is immediate and left to the reader.

\begin{lemma}
  \label{lem:MultiInd}
  \
  \begin{enumerate}[leftmargin=1.2cm, label={\rm (\roman*)}, ref={\rm (\roman*)}]
    \item Let $n(j)$ and $m(j)$ be sequences of integers converging to infinity, then
    \begin{equation}
      \Big\| {\limsup_{j \to \infty}}^+ f_{n(j), m(j)} \Big\|_p
      \leq
      \Big\| {\limsup_{n, m \to \infty}}^+ f_{n, m} \Big\|_p.
    \end{equation}
    \item When $(f_{n})$ only depends on one of the indices, we have that
    \begin{equation}
      \Big\| {\limsup_{n, m \to \infty}}^+ f_{n} \Big\|_p
      =
      \Big\| {\limsup_{n \to \infty}}^+ f_{n} \Big\|_p.
    \end{equation}
  \end{enumerate}
\end{lemma}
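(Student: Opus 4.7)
My plan is to invoke the alternative characterization of the $\limsup^+$ norm from Proposition \ref{prp:AltNorm} (and its evident analog for the two-index quotient defined via $c_\Delta$), reducing both parts to elementary manipulations at the level of $L_p[\ell_\infty]$-norms. The two-index analog reads
\[
\Big\| {\limsup_{n,m \to \infty}}^+ f_{n,m} \Big\|_p
= \sup_{\varepsilon > 0} \inf_{\substack{e \in \Prj(\M) \\ \tau(e) < \varepsilon}} \inf_{N, M \geq 0} \big\| (e^\perp f_{n,m} e^\perp)_{n > N, m > M} \big\|_{L_p[\ell_\infty]},
\]
the verification of which is identical to the proof of Proposition \ref{prp:AltNorm} with $c_\Delta$ in place of $c_0$.

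For part \ref{itm:MultiInd.1}, I would fix $\varepsilon > 0$, take a projection $e \in \Prj(\M)$ with $\tau(e) < \varepsilon$, and choose integers $N, M$. Since $n(j) \to \infty$ and $m(j) \to \infty$, there is $J$ such that $j > J$ forces simultaneously $n(j) > N$ and $m(j) > M$. The singly-indexed family $(e^\perp f_{n(j),m(j)} e^\perp)_{j > J}$ is then a sub-indexing of the doubly-indexed array $(e^\perp f_{n,m} e^\perp)_{n > N, m > M}$, and the central $\ell_\infty$-bimodule property of $L_p[\ell_\infty]$ (applied exactly as in \eqref{eq:monotonicLpLinfty} but with the indicator supported on the subfamily) controls the $L_p[\ell_\infty]$-norm of the subfamily by that of the full array. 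Taking the triple $\sup_{\varepsilon}\inf_e \inf_{N,M}$ delivers the desired inequality.

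For part \ref{itm:MultiInd.2}, with $f_{n,m} = f_n$ independent of $m$, the key observation is that the constant-in-$m$ embedding $\ell_\infty(\NN) \hookrightarrow \ell_\infty(\NN \times \NN)$ lifts to an isometric embedding at the level of $L_p[\ell_\infty]$. Indeed, any factorization $e^\perp f_n e^\perp = \alpha g_n \beta$ with $\alpha,\beta \in L_{2p}(\M)$ and $(g_n)_n \in \ell_\infty \weaktensor \M$ produces, by viewing $(g_n)_n$ as an element of $\ell_\infty(\NN \times \NN) \weaktensor \M$ constant in $m$, a factorization of the doubly-indexed constant family with the same norm parameters; conversely, any such doubly-indexed factorization restricts to a singly-indexed one by fixing any $m_0$. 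Consequently, for every admissible $e, N$ and every $M \geq 0$,
\[
\big\| (e^\perp f_n e^\perp)_{n > N, m > M} \big\|_{L_p[\ell_\infty(\NN \times \NN)]}
= \big\| (e^\perp f_n e^\perp)_{n > N} \big\|_{L_p[\ell_\infty(\NN)]}.
\]
Hence the infimum over $M$ in the two-index characterization is vacuous, and the $(N,M)$-double infimum collapses to the one-index infimum over $N$. Matching the outer $\sup_\varepsilon \inf_e$ on both sides yields the equality.

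The only mildly delicate point is the constant-in-$m$ isometric identification of the $L_p[\ell_\infty]$-norms just used; everything else is bookkeeping with Proposition \ref{prp:AltNorm}. Since the factorization definition of $L_p[\ell_\infty]$ pushes the argument onto $\ell_\infty \weaktensor \M$, where constant-in-$m$ sequences manifestly have the same $\infty$-norm as their one-index versions, this identification is immediate and matches the author's remark that the proof is routine.
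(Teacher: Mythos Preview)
Your argument is correct and is precisely the natural filling-in of details that the paper omits (the authors declare the proof ``immediate and left to the reader''). The reduction via Proposition~\ref{prp:AltNorm} and its two-index analog, together with the $\ell_\infty$-monotonicity under sub-indexing and the isometric constant-in-$m$ embedding, is exactly the routine verification intended.
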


\begin{remark}
  \label{rmk:asymmetric}
  In the same way in which Propositions \ref{prp:BAUImpLimsup} and \ref{prp:LimsupImpBAU} connect the boundedness of the $\limsup^+$ with the bilateral almost uniform convergence it is possible to introduce asymmetric versions of the space $L_p^\loc(\M;\ell_\infty)$ that are connected to the column and row almost uniform convergence. Start recalling the definition of the asymmetric $L_p[\ell_\infty]$-spaces from \cite{JunXu2007, JunPar2010}, see also \cite[3.1.2]{Defant2011}, given by
  \begin{eqnarray*}
    L_p(\M;\ell_\infty^c) & = & \big( \M \weaktensor \ell_\infty \big) \, L_p(\M)\\
    L_p(\M;\ell_\infty^r) & = & L_p(\M) \, \big( \M \weaktensor \ell_\infty \big),
  \end{eqnarray*}
  where, as before, the norms are given by taking the infima over all decompositions of $(f_n)$ as $f_n = h_n \, \alpha$ or $f_n = \alpha \, h_n$ of $\| \alpha \|_p \sup_n \| h_n \|_\infty$. Those spaces have been studied for their connection with the column/row almost uniform convergence. Let us focus our attention in the column case since the other one follows symmetrically. We can define a linear non-closed subspace $\L_c \subset L_1(\M;\ell_\infty^c)$ given by all sequences $(f_n)_n$ such that for every $\epsilon > 0$, there is a projection $e \in \Prj(\M)$ and an integer $N$ such that
  \begin{itemize}[leftmargin=1.2cm]
    \item $\tau(e) < \epsilon$
    \item $f_n \, e^\perp = 0$, for every $n > N$
  \end{itemize}
  The closure of $\L_c$ will be denoted by $L_p^\loc(\M;c_0^c) \subset L_p(\M;\ell_\infty^c)$.
  We define the quasi-Banach space
  \[
    L_p^\loc(\M;\ell_\infty^c/c_0^c) = \frac{L_p(\M;\ell_\infty^c)}{L_p(\M;c_0^c)}.
  \]
  The analogues of Propositions \ref{prp:BAUImpLimsup} and \ref{prp:LimsupImpBAU}  hold in this case. Indeed, if $(\M,\tau)$ is a finite von Neumann algebra, it holds that
  \begin{equation*}
    f_n \longrightarrow f \mbox{ column almost uniformly }
    \quad \Longrightarrow \quad 
    \big\| (f_n)_n \big\|_{L_p^\loc[\ell_\infty^c/c_0^c]} \leq \| f \|_p.
  \end{equation*}
  Similarly, if in Proposition \ref{prp:LimsupImpBAU}, we change the third assumption by
  \[
    \big\| (T_n)_n: L_\Phi(\M) \to L_p^\loc(\M;\ell_\infty^c / c_0^c) \big\| < \infty.
  \]
  Then, the consequence is that $T_n(f) \to F(f)$ column almost uniformly.
\end{remark}

\section{\bf The Jessen-Marcinkiewicz-Zygmund inequality \label{sct:JMZ}}

In this section we will prove one of the main results of the article, Theorem \ref{thm:MainTool}. Despite following relatively easily from the theory that we have already developed, that theorem is the main ingredient of Corollary \ref{cor:MultiMartingale}. We will also explore applications to multiparametric ergodic theory and asymmetric, i.e. column/row, versions of those results.

\begin{proof}[{\bf Proof (of Theorem \ref{thm:MainTool}.)}]
  Since all the operators involved $B_m$ and $A_n$ are assumed to be positivity preserving, it is enough to prove the result within the positive cone. Fix $f \in L_\Phi(\M)$ with $f \geq 0$. By Proposition \ref{prp:AltNorm}, we have that the
  norm of $L_1^\loc(\M;\ell_\infty/c_0)$ is monotone with respect to the natural
  order inherited by $L_1(\M;\ell_\infty)$. Using assumption \ref{itm:MainTool.1}
  as well as characterization \eqref{eq:L1Linfty} we have that for every $\epsilon > 0$, there is a $h \in L_1(\M)$ such that
  \[
    B_m(f) \leq h, \mbox{ for every } m
  \]
  and 
  \[
    \| h \|_1 \leq (1+\epsilon) \, \big\| (B_m)_m: L_\Phi(\M) \to L_1(\M;\ell_\infty) \big\| \| f \|_\Phi.
  \]
  The monotonicity and Lemma \ref{lem:MultiInd} gives that
  \[
    \Big\| {\limsup_{n,m \to \infty}}^{+} A_n \circ B_m (f) \Big\|_1 
    \leq 
    \Big\| {\limsup_{n,m \to \infty}}^{+} A_n(h) \Big\|_1
    \leq 
    \Big\| {\limsup_{n \to \infty}}^{+} A_n(h) \Big\|_1.
  \]
  But, since $A_n(h) \to F(h)$ bilaterally almost uniformly by \ref{itm:MainTool.2},
  we can apply Proposition \ref{prp:BAUImpLimsup} to conclude that
  \begin{eqnarray*}
      &      & \Big\| {\limsup_{n,m \to \infty}}^{+} A_n \circ B_m (f) \Big\|_1 \\
      & \leq & \big\| F: L_1(\M) \to L_1(\M) \big\| \, \| h \|_1\\
      & \leq & (1 + \epsilon) \, \big\| F: L_1(\M) \to L_1(\M) \big\|
                              \, \big\| (B_m)_m: L_\Phi(\M) \to L_1(\M;\ell_\infty) \big\| \| f \|_\Phi.
  \end{eqnarray*}
  and since $\epsilon$ is arbitrary we can conclude.
\end{proof}

Now, we have all the ingredients to obtain Corollary \ref{cor:MultiMartingale}.

\begin{proof}[{\bf Proof (of Corollary \ref{cor:MultiMartingale}.)}]
  Since $B_m(f) =  \EE_m^{[2]}(f)$ is a martingale sequence, we can apply Theorem \ref{thm:MartingalesLpMax}.\ref{eq:MartingalesLpMax2} in order to obtain that the maximal operator associated to $B_m$ maps $L \log^2 L (\M)$ into $L_1(\M;\ell_\infty)$. Similarly, by Cuculescu's theorem we obtain that $\EE_n^{[1]}(f)$ converge b.a.u. for every $f \in L_1(\M)$. An application Theorem \ref{thm:MainTool} gives the result.
\end{proof}
%follows from a combination of \ref{thm:LpBoundErgodic} and the Theorem \ref{thm:MainTool}. The same techniques also yields the following corollary.
The following corollary gives a similar result for multiparametric ergodic means.
\begin{corollary}
  \label{cor:MultErgodic}
  Let $(\M^{[i]}, \tau_i)$ for $i \in \{1, 2\}$ be two finite von Neumann algebras
  and $T_i: \M^{[i]} \to \M^{[i]}$ be Markovian operators
  and $F_i$ the projections onto their fixed subspaces $\ker(\Id - T_i)$.
  \begin{enumerate}[leftmargin=1.2cm, ref={\rm (\roman*)}, label={\rm (\roman*)}]
    \item \label{itm:MultErgodic.1}
    Let $(\M, \tau) = (\M^{[1]} \weaktensor \M^{[2]}, \tau_1 \otimes \tau_2)$,
    it holds that
    \[
      \Big\| \, {\limsup_{n, m \to \infty}}^{+} \big(M_n(T_1) \otimes M_m(T_2)\big)(f) \Big\|_1
      \, \lesssim \,
      \| f \|_{L \log^2 L(\M)}.
    \]
    As a consequence $(M_n(T_1) \otimes M_m(T_2))(f) \to (F_1 \otimes F_2)(f)$
    bilaterally almost uniformly as $n, m \to \infty$, for every $f \in L \log^2 L(\M)$.
    \item \label{itm:MultErgodic.2}
    Let $(\M, \tau) = (\M^{[1]} \ast \M^{[2]}, \tau_1 \ast \tau_2)$, it holds that
    \[
      \Big\| \, {\limsup_{n, m \to \infty}}^{+} \big(M_n(T_1) \ast M_m(T_2)\big)(f) \Big\|_1
      \, \lesssim \,
      \| f \|_{L \log^2 L(\M)}
    \]
     As a consequence $(M_n(T_1) \ast M_m(T_2)) \to (F_1 \ast F_2)(f)$ bilaterally
    almost uniformly as $n, m \to \infty$, for every $f \in L \log^2 L(\M)$.
    \item \label{itm:MultErgodic.3}
    Furthermore, if $T_2$ is symmetric, we can change $M_m(T_2)$ by $T_2^m$ in points
    \ref{itm:MultErgodic.1} and \ref{itm:MultErgodic.2} above.
  \end{enumerate}
\end{corollary}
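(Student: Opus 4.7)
The strategy follows the pattern of Corollary \ref{cor:MultiMartingale}: reduce to Theorem \ref{thm:MainTool} by factoring the bi-parametric operator as a composition of two one-parameter ergodic averages, and then promote the resulting $\limsup^+$ estimate to b.a.u.\ convergence via Proposition \ref{prp:LimsupImpBAU}. The only substantive difference is that Cuculescu-Doob must be replaced by the ergodic $L_p$-maximal inequality of Theorem \ref{thm:LpBoundErgodic}.

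For part \ref{itm:MultErgodic.1}, observe that on $\M = \M^{[1]} \weaktensor \M^{[2]}$ both $T_1 \otimes \Id$ and $\Id \otimes T_2$ are Markovian, and their ergodic means factorise as $M_n(T_1 \otimes \Id) = M_n(T_1) \otimes \Id$ and $M_m(\Id \otimes T_2) = \Id \otimes M_m(T_2)$. Setting $A_n = M_n(T_1 \otimes \Id)$ and $B_m = M_m(\Id \otimes T_2)$, the composition $A_n \circ B_m$ equals $M_n(T_1) \otimes M_m(T_2)$. Theorem \ref{thm:LpBoundErgodic}.\ref{itm:LpBoundErgodic.2} provides the boundedness of $(B_m)_m \colon L\log^2 L(\M) \to L_1(\M;\ell_\infty)$, while Theorem \ref{thm:LpBoundErgodic}.\ref{itm:LpBoundErgodic.3} gives b.a.u.\ convergence of $A_n(f)$ for every $f \in L_1(\M)$. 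Feeding these two ingredients into Theorem \ref{thm:MainTool} yields the desired $\limsup^+$ bound.

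Part \ref{itm:MultErgodic.2} is then identical, once Theorem \ref{thm:FreeProductExtension}.\ref{itm:FreeProductExtension.1} is used to extend $T_1 \ast \Id$ and $\Id \ast T_2$ as Markovian operators on $\M^{[1]} \ast \M^{[2]}$. Evaluating on pure tensors of the algebraic free product $D$ shows $(T_1 \ast \Id)^k = T_1^k \ast \Id$, hence $M_n(T_1 \ast \Id) = M_n(T_1) \ast \Id$ and similarly for $T_2$, so that $A_n \circ B_m = M_n(T_1) \ast M_m(T_2)$; the rest of the argument is verbatim. For part \ref{itm:MultErgodic.3} one simply replaces $B_m$ by $\Id \otimes T_2^m$ or $\Id \ast T_2^m$ and invokes Theorem \ref{thm:LpBoundErgodic}.\ref{itm:LpBoundErgodic.4} in place of \ref{itm:LpBoundErgodic.2}.

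To upgrade the $\limsup^+$ estimate to bilateral almost uniform convergence with the stated limit, I would apply Proposition \ref{prp:LimsupImpBAU}. The only nontrivial hypothesis is exhibiting a class $\S \subset L_1(\M) \cap \M$ dense in $L\log^2 L(\M)$ on which $(M_n(T_1) \otimes M_m(T_2))(f) \to (F_1 \otimes F_2)(f)$ (respectively the free analogue) holds in operator norm. In the tensor case, take $\S = \S_1 \algtensor \S_2$ where $\S_i$ are the dense classes supplied by Proposition \ref{prp:ErgodicNorm}.\ref{itm:ErgodicNorm.3} for $T_i$; on a pure tensor $f_1 \otimes f_2$, norm convergence $M_n(T_1)(f_1) \to F_1(f_1)$ and $M_m(T_2)(f_2) \to F_2(f_2)$ in the respective $\M^{[i]}$ combine submultiplicatively. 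In the free case one replaces $\algtensor$ by the algebraic free product generated by $\S_1 \cup \S_2$ and argues similarly on pure tensors. The main obstacle I anticipate is the free-case bookkeeping: verifying that the composition $M_n(T_1 \ast \Id) \circ M_m(\Id \ast T_2)$ really equals $M_n(T_1) \ast M_m(T_2)$ as Markovian operators on the entire von Neumann algebra, and that the limit is the projection $F_1 \ast F_2$, both of which reduce by normality and $L_1$-continuity to the explicit computation on the weak-$\ast$ dense algebra $D$.
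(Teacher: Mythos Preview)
Your proposal is correct and follows essentially the same approach as the paper: factor the biparametric operator as $A_n \circ B_m$ with $A_n = M_n(T_1) \otimes \Id$ and $B_m = \Id \otimes M_m(T_2)$ (or their free analogues), verify the two hypotheses of Theorem~\ref{thm:MainTool} via Theorem~\ref{thm:LpBoundErgodic}, and handle part~\ref{itm:MultErgodic.3} by invoking Theorem~\ref{thm:LpBoundErgodic}.\ref{itm:LpBoundErgodic.4}. If anything, you are more explicit than the paper about the passage from the $\limsup^+$ estimate to b.a.u.\ convergence (Proposition~\ref{prp:LimsupImpBAU} and the dense class $\S$) and about the free-product identities $M_n(T_1 \ast \Id) = M_n(T_1) \ast \Id$, both of which the paper leaves implicit.
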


\begin{proof}
  Again, the proof is an application of Theorem \ref{thm:MainTool}. We will only prove points \ref{itm:MultErgodic.1} and \ref{itm:MultErgodic.3} since \ref{itm:MultErgodic.2} follows like point \ref{itm:MultErgodic.1}.
  
  Notice that, by Theorem \ref{thm:LpBoundErgodic}, we have that the maximal associated to $\Id \otimes M_m(T_2)$ maps $L \log^2 L(\M)$ into $L_1(\M;\ell_\infty)$. It is a consequence of Yeadon's results that $M_n(T_1) \otimes \Id$ converge b.a.u. Gathering both results allow to apply Theorem \ref{thm:MainTool}.
  For \ref{itm:MultErgodic.3} we use that, when $T_2$ is symmetric, so is $T_2 \otimes \Id$ and therefore, by of \ref{thm:LpBoundErgodic}\ref{itm:LpBoundErgodic.4} the maximal operator given by powers of $\Id \otimes T_2$ is maximally bounded from $L \log^2 L(\M)$ into $L_1(\M;\ell_\infty)$.
\end{proof}

There are two natural generalization of the results above. One is in the case of asymmetric spaces, the other relates to higher iterations, i.e. working with $\EE_{n_1}^{[1]} \otimes \EE_{n_2}^{[2]} ... \otimes \EE_{n_r}^{[r]}$ instead of just $\EE_{n_1}^{[1]} \otimes \EE_{n_2}^{[2]}$. We will briefly cover both direction here.

The asymmetric result can be seen as a generalization of \cite{Litvinov2012Ergodic}, see also \cite{HongSun2018,Hong2016asymmetric}. We will need the following proposition, whose proof in immediate and this we omit it.

\begin{proposition}
  \label{prp:ColumnRowSquare}
  \
  \begin{enumerate}[leftmargin=1.2cm, label={\rm (\roman*)}, ref={\rm (\roman*)}]
    \item \label{itm:ColumnRowSquare.1}
    $\displaystyle{
      \big\| f^\ast f \big\|_{L_1 \log^s L}^\frac12
        \, \sim_{(s)} \, \| f \|_{L_2 \log^s L}
        \, \sim_{(s)} \, \big\| f \, f^\ast \big\|_{L_1 \log^s L}^\frac12}$
    \item \label{itm:ColumnRowSquare.2}
    $\displaystyle{
      \big\| (f^\ast_n f_n)_n \big\|_{L_1[\ell_\infty]}^\frac12 
        \, = \, \big\| (f_n)_n \big\|_{L_2[\ell_\infty^c]}
      \quad \mbox{ and } \quad
      \big\| (f_n \, f^\ast_n)_n \big\|_{L_1[\ell_\infty]}^\frac12
        \, = \, \big\| (f_n)_n \big\|_{L_2[\ell_\infty^r]}
    }$
    \item \label{itm:ColumnRowSquare.3}
    $\displaystyle{
      \big\| (f^\ast_n \, f_n)_n \big\|_{L_1^\loc[\ell_\infty/c_0]}^\frac12
        \, = \, \| (f_n)_n \|_{L_2^\loc[\ell_\infty^c/c_0^c]}
      \quad \mbox{ and } \quad
      \big\| (f_n \, f^\ast_n) \big\|_{L_1^\loc[\ell_\infty/c_0]}^\frac12
        \, = \, \big\| (f_n)_n \big\|_{L_2^\loc[\ell_\infty^r/c_0^r]}
    }$
  \end{enumerate}
\end{proposition}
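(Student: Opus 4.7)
My plan is to treat the three parts in order, using (ii) as the key tool for (iii).

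For (i), I would carry out a direct Luxemburg modular comparison. Let $\Phi_s(t) = t(1+\log_+ t)^s$ be the function defining $L\log^s L$ and $\Psi_s(t) = t^2(1+\log_+ t)^s$ the modular of $L_2\log^s L$. The elementary identity $\log_+(t^2) = 2\log_+(t)$ gives
\[
\Psi_s(t) \, \leq \, \Phi_s(t^2) \, \leq \, 2^s \, \Psi_s(t),
\]
and combined with the monotonicity of $\mu \mapsto \tau(\Phi_s(|f|^2/\mu))$ in the scaling parameter of the Luxemburg norm this immediately yields $\|f\|_{L_2\log^s L}^{2} \leq \|f^*f\|_{L\log^s L} \leq 2^s\|f\|_{L_2\log^s L}^{2}$; the analogue with $ff^*$ follows from $\|f^*\|_{L_2\log^s L} = \|f\|_{L_2\log^s L}$.

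For (ii) I focus on the column equality (the row version is symmetric). The easy direction uses a factorization $f_n = h_n\alpha$ with $(h_n) \in \ell_\infty \weaktensor \M$ and $\alpha \in L_2(\M)$: the identity $(f_n^*f_n)_n = \alpha^* \cdot (h_n^*h_n)_n \cdot \alpha$ displays the sequence as an element of $L_1(\M;\ell_\infty) = L_2(\M)(\ell_\infty \weaktensor \M)L_2(\M)$ with norm at most $\|\alpha\|_2^2 \sup_n\|h_n\|_\infty^2$, and taking the infimum over factorizations yields $\|(f_n^*f_n)\|_{L_1[\ell_\infty]} \leq \|(f_n)\|_{L_2[\ell_\infty^c]}^2$. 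For the reverse, I would invoke the positive-sequence characterization \eqref{eq:L1Linfty}: given $\epsilon>0$, pick $g \in L_1(\M)_+$ with $f_n^*f_n \leq g$ and $\|g\|_1 \leq (1+\epsilon)\|(f_n^*f_n)\|_{L_1[\ell_\infty]}$, set $\alpha = g^{1/2} \in L_2(\M)$, and apply the Douglas factorization lemma (in its tracial von Neumann algebra form) to $f_n^*f_n \leq \alpha^*\alpha$ to produce contractions $h_n \in \M$ with $f_n = h_n\alpha$. This gives $\|(f_n)\|_{L_2[\ell_\infty^c]} \leq \|g\|_1^{1/2}$, closing the estimate after letting $\epsilon \to 0$.

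Part (iii) is the main step; the plan is to apply (ii) uniformly at each stage of the sup-inf-inf formula. The first sub-step is to establish the asymmetric analog of Proposition \ref{prp:AltNorm},
\[
\|(f_n)\|_{L_2^\loc[\ell_\infty^c/c_0^c]} \, = \, \sup_{\epsilon>0} \, \inf_{\substack{e \in \Prj(\M) \\ \tau(e)<\epsilon}} \, \inf_{N} \, \bigl\|(f_n e^\perp)_{n>N}\bigr\|_{L_2[\ell_\infty^c]},
\]
by repeating the proof of Proposition \ref{prp:AltNorm} with the one-sided right $\M$-action on $L_2[\ell_\infty^c]$ replacing the bilateral bimodule property \eqref{eq:bimoduleProp}, and with the optimal truncation $h_n = f_n e$ for $n>N$ replacing its two-sided symmetrization. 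The second sub-step is the algebraic identity $(f_n e^\perp)^*(f_n e^\perp) = e^\perp f_n^* f_n e^\perp$, which is precisely the bridge from one-sided cut-offs on $f_n$ to bilateral cut-offs on $f_n^*f_n$. Applying (ii) to $(f_n e^\perp)_{n>N}$ for each admissible $(\epsilon,e,N)$ squares the inner norm into $\|(e^\perp f_n^* f_n e^\perp)_{n>N}\|_{L_1[\ell_\infty]}$; since squaring is monotone on $[0,\infty)$ it commutes with sup-inf-inf, and Proposition \ref{prp:AltNorm} reassembles the outer sup-inf-inf into $\|(f_n^*f_n)\|_{L_1^\loc[\ell_\infty/c_0]}$. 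The row identity follows by passing to adjoints. The only non-routine step in this program is the asymmetric version of Proposition \ref{prp:AltNorm}: it is not conceptually hard, but care is required to keep the cut-offs consistently on the correct side, since the bilateral argument genuinely uses the full bimodule structure that $L_p[\ell_\infty^c]$ lacks.
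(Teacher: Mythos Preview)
The paper omits the proof entirely, calling it ``immediate'', so there is no argument to compare against. Your approach is correct and is essentially the natural one the authors had in mind: part (i) is a direct modular comparison, part (ii) is the standard Douglas-factorization argument linking $L_2[\ell_\infty^c]$ to the positive characterization \eqref{eq:L1Linfty} of $L_1[\ell_\infty]$, and part (iii) follows by plugging (ii) into the sup--inf--inf formula once you establish the column analogue of Proposition~\ref{prp:AltNorm}. Your identification of the key algebraic bridge $(f_n e^\perp)^*(f_n e^\perp) = e^\perp f_n^* f_n e^\perp$ is exactly the point, and your caution about only having a one-sided module structure on $L_2[\ell_\infty^c]$ is well placed but harmless here, since the proof of Proposition~\ref{prp:AltNorm} in the $(\geq)$ direction only ever multiplies by $e^\perp$ on the side where the module action survives.
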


Now, we will formulate the asymmetric analogue of Corollary \ref{cor:MultErgodic}.

\begin{corollary}
  \label{cor:Assymetric}
  Let $(\M^{[i]}, \tau_i)$ for $i \in \{1, 2\}$ be two finite von Neumann algebrasç
  and $(\M_m^{[i]})_m$ and $\EE_m^{[i]}: \M \to \M_m^{[i]}$ be as before.
  Let $(M, \tau)$ be $(\M^{[1]} \weaktensor \M^{[2]}, \tau_1 \otimes \tau_2)$.
  It holds that
    \[
      \label{eq:Assymetric.1}
      \big\| \big(\EE_n^{[1]} \otimes \EE_m^{[2]}\big)_{n,m}:
        L_2 \log^2 L(\M) \to L_2^\loc(\M;\ell_\infty^\dagger/c_0^\dagger) \big\|
      < \infty
    \]
    where $\dagger \in \{ r, c \}$. As a consequence $(\EE_n^{[1]} \otimes \EE_m^{[2]})(f) \to f$
    row and column almost uniformly as $n, m \to \infty$, for every $f \in L_2 \log^2 L(\M)$.
\end{corollary}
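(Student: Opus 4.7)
The plan is to reduce the asymmetric (column/row) assertion to the bilateral version in Corollary \ref{cor:MultiMartingale}\ref{itm:MultiMartingale.2} via the square-function identities gathered in Proposition \ref{prp:ColumnRowSquare}. Write $f_{n,m} = (\EE_n^{[1]} \otimes \EE_m^{[2]})(f)$ and focus on the column case, since the row case is symmetric (work with $f_{n,m} f_{n,m}^\ast$ instead of $f_{n,m}^\ast f_{n,m}$). By Proposition \ref{prp:ColumnRowSquare}\ref{itm:ColumnRowSquare.3} (in its two-parameter form, which is completely analogous), we have
\[
  \big\| (f_{n,m})_{n,m} \big\|_{L_2^\loc[\ell_\infty^c/c_0^c]}^{2}
  \, = \,
  \big\| (f_{n,m}^\ast f_{n,m})_{n,m} \big\|_{L_1^\loc[\ell_\infty/c_\Delta]},
\]
so it suffices to produce an $L_1^\loc[\ell_\infty/c_\Delta]$ bound for the positive sequence $(f_{n,m}^\ast f_{n,m})_{n,m}$ in terms of $\|f\|_{L_2 \log^2 L}^2$.

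The key dominance step is the Kadison--Schwarz inequality: conditional expectations are completely positive and unital, and tensor products of such maps remain so, hence
\[
  f_{n,m}^\ast f_{n,m}
  \, = \,
  \bigl(\EE_n^{[1]} \otimes \EE_m^{[2]}\bigr)(f)^\ast \,\bigl(\EE_n^{[1]} \otimes \EE_m^{[2]}\bigr)(f)
  \, \le \,
  \bigl(\EE_n^{[1]} \otimes \EE_m^{[2]}\bigr)(f^\ast f).
\]
Combining this with the order-monotonicity of the $L_1^\loc[\ell_\infty/c_\Delta]$ seminorm, which is inherited from the monotonicity of $L_1[\ell_\infty]$ through the alternative description in Proposition \ref{prp:AltNorm}, we obtain
\[
  \big\| (f_{n,m}^\ast f_{n,m})_{n,m} \big\|_{L_1^\loc[\ell_\infty/c_\Delta]}
  \, \le \,
  \Big\| {\limsup_{n,m \to \infty}}^{+} \bigl(\EE_n^{[1]} \otimes \EE_m^{[2]}\bigr)(f^\ast f) \Big\|_1.
\]
Applying Corollary \ref{cor:MultiMartingale}\ref{itm:MultiMartingale.2} to the positive operator $f^\ast f$ and then Proposition \ref{prp:ColumnRowSquare}\ref{itm:ColumnRowSquare.1} gives
\[
  \big\| (f_{n,m}^\ast f_{n,m})_{n,m} \big\|_{L_1^\loc[\ell_\infty/c_\Delta]}
  \, \lesssim \,
  \| f^\ast f \|_{L \log^2 L}
  \, \sim \,
  \| f \|_{L_2 \log^2 L}^{2},
\]
which yields the asserted boundedness after taking square roots.

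For the column (respectively row) almost uniform convergence of $f_{n,m}$ to $f$ on $L_2 \log^2 L(\M)$, I would invoke the asymmetric analogue of Proposition \ref{prp:LimsupImpBAU} described in Remark \ref{rmk:asymmetric}. The maximal-type estimate just established supplies the required boundedness hypothesis, and a dense subclass is provided by the algebraic tensor product $\bigcup_{n,m} \M_n^{[1]} \otimes \M_m^{[2]}$, on which eventual stabilization makes convergence in $\M$-norm (hence column and row almost uniform convergence) immediate. The main thing to check carefully is that the density/approximation argument of Proposition \ref{prp:LimsupImpBAU} transfers faithfully to the column and row variants; this is exactly the content of Remark \ref{rmk:asymmetric}, so no genuine new obstacle appears. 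I expect the one subtle point to be the verification of order-monotonicity of the quotient seminorm $L_1^\loc[\ell_\infty/c_\Delta]$, but this follows directly from the two-parameter extension of Proposition \ref{prp:AltNorm} together with the monotonicity of $L_1[\ell_\infty]$ recalled in \eqref{eq:L1Linfty}.
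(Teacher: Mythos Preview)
Your proof is correct and follows essentially the same approach as the paper: reduce to the bilateral $\limsup^+$ estimate of Corollary \ref{cor:MultiMartingale} via Kadison's inequality $f_{n,m}^\ast f_{n,m} \le (\EE_n^{[1]} \otimes \EE_m^{[2]})(f^\ast f)$, combined with the square-function identities of Proposition \ref{prp:ColumnRowSquare}. Your write-up is in fact slightly more careful than the paper's on a couple of points (using the two-parameter $c_\Delta$ notation, and spelling out the density argument for the column/row almost uniform convergence via Remark \ref{rmk:asymmetric}), but the substance is identical.
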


The same result follows with trivial changes for free products and in the case of ergodic means. 

\begin{proof}
  We are only going to prove the column case since both are identical.
  For every $f \in L_2 \log^2 L(\M)$, we have that $g = f^\ast \, f \in L \log^2 L(\M)$
  by Proposition \ref{prp:ColumnRowSquare} \ref{itm:ColumnRowSquare.1}. Thus, applying 
  $\EE_n^{[1]} \otimes \EE_m^{[2]}$ to $g$ and using Corollary \ref{cor:MultiMartingale}
  \ref{itm:MultiMartingale.1} gives that
  \[
    \Big\| \big(\EE_n^{[1]} \otimes \EE_m^{[2]}\big)(g) \Big\|_{L_1^\loc[\ell_\infty/c_0]}^\frac12
    \, \lesssim \,
    \| g \|_{L \log^2 L}^\frac12,
  \]
  but the right hand side is equivalent to the $L_2 \log^2 L$-norm of $f$, while, by Kadison's inequality,
  the left hand side satisfies that
  \[
    \big(\EE_n^{[1]} \otimes \EE_m^{[2]}\big)(g)
    =
    \big(\EE_n^{[1]} \otimes \EE_m^{[2]}\big)(f^\ast f)
    \geq
    \big(\EE_n^{[1]} \otimes \EE_m^{[2]}\big)(f)^\ast \big(\EE_n^{[1]} \otimes \EE_m^{[2]}\big)(f)
  \]
  Applying Proposition \ref{prp:ColumnRowSquare} \ref{itm:ColumnRowSquare.3}
  gives the desired result.
\end{proof}

In order to extend Corollaries \ref{cor:MultErgodic} and \ref{cor:MultiMartingale} to the case of larger index sets we need the following proposition, which is a straightforward generalization of \cite[p. 1121]{HongSun2018}. Observe also the the proposition bellow is a generalization of
\cite[Theorem 2.3]{Hu2009} or Theorem \ref{thm:Extrapolation}.

\begin{proposition}
  \label{prp:multilogarithms}
  Let  $(S_m)_m$ be a
  family of Markovian operators such that
  \[
    \big\| (S_m)_m: L_p(\M) \to L_p(\M;\ell_\infty) \big\|
      \, \lesssim \, \max \Big\{ 1, \Big( \frac1{p - 1} \Big)^\alpha \Big\} 
  \]
  for some $\alpha \geq 0$. There, for every $\beta \geq 0$ it holds that
  \[
    \big\| (S_m)_m: L \log^{\beta + \alpha} L(\M) \to L \log^{\beta} L (\M;\ell_\infty) \big\|
    \, \lesssim \, 
    \infty.
  \]
\end{proposition}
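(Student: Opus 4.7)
My plan is a Yano-type extrapolation argument, following the template of Theorem \ref{thm:Extrapolation} (the case $\beta = 0$ from \cite{Hu2009}) with additional bookkeeping for the logarithmic weight. By positivity-preservation and the characterization \eqref{eq:AltnormOrlicz}, it suffices to show that for each positive $f \in L\log^{\beta+\alpha}L(\M)$ with $\|f\|_{L\log^{\beta+\alpha}L} = 1$ there exists $g \geq 0$ with $S_m f \leq g$ for every $m$ and $\|g\|_{L\log^\beta L} \lesssim 1$.

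First, decompose $f$ dyadically via spectral projections: set $e_k = \chi_{(2^{k-1}, 2^k]}(f)$ for $k \geq 1$, $e_0 = \chi_{[0,1]}(f)$, and $f_k = f e_k$, so that $f = \sum_k f_k$ with $\|f_k\|_\infty \leq 2^k$. For each $k$, I would choose $p_k = 1 + 1/(k+1)$, so that the hypothesis gives $\|(S_m f_k)_m\|_{L_{p_k}[\ell_\infty]} \lesssim (k+1)^\alpha \|f_k\|_{p_k}$. Applying \eqref{eq:L1Linfty} produces $g_k \geq 0$ with $S_m f_k \leq g_k$ for all $m$ and $\|g_k\|_{p_k} \lesssim (k+1)^\alpha \|f_k\|_{p_k} \leq (k+1)^\alpha \, 2^k \tau(e_k)^{1/p_k}$. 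Since each $S_m$ is Markovian, one has $\|S_m f_k\|_\infty \leq \|f_k\|_\infty \leq 2^k$ uniformly in $m$, and a truncation of the $L_{p_k}[\ell_\infty]$-minimizer allows me to also assume $\|g_k\|_\infty \leq 2^k$.

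With both bounds in hand, $(1+\log_+ g_k)^\beta \leq (1+k\log 2)^\beta \lesssim (k+1)^\beta$, and so, writing $\Phi(t) = t(1+\log_+ t)^\beta$,
\[
  \tau \bigl( \Phi(g_k) \bigr) \, \leq \, (1+k\log 2)^\beta \, \tau(g_k) \, \lesssim \, (k+1)^\beta \|g_k\|_{p_k} \, \lesssim \, (k+1)^{\alpha+\beta} \, 2^k \, \tau(e_k)^{1/p_k}.
\]
Summing in two regimes: for $k$ with $\tau(e_k) \geq e^{-k}$ one has $\tau(e_k)^{1/p_k} \leq e\, \tau(e_k)$, and the contribution is bounded by $\sum_k (k+1)^{\alpha+\beta} 2^k \tau(e_k) \lesssim \tau(\Phi_{\beta+\alpha}(f)) \sim 1$; for $k$ with $\tau(e_k) < e^{-k}$, an elementary computation using $(k+1)/(k+2) > \log 2$ for $k \geq 2$ shows that $2^k \tau(e_k)^{1/p_k}$ decays geometrically, contributing $O(1)$. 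Therefore $\sum_k \tau(\Phi(g_k)) \lesssim 1$.

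Setting $g = \sum_k g_k$ gives $S_m f \leq g$ for every $m$. The hard part will be the final passage from the modular summability $\sum_k \tau(\Phi(g_k)) \lesssim 1$ to the Luxemburg-norm estimate $\|g\|_{L\log^\beta L} \lesssim 1$. A direct triangle inequality on the norms is too lossy because $\Phi$ is not sublinear and the naive bound $\|g_k\|_{L_\Phi} \leq \max(1, \tau(\Phi(g_k)))$ sums badly due to the "$1$" term. The resolution exploits the $\Delta_2$ doubling of $\Phi$ together with the essentially nested spectral structure inherited by the $g_k$ from the dyadic decomposition of $f$ and the $L_\infty$-truncation at level $2^k$, so that the $g_k$ align with the layer-cake decomposition of $\Phi(g)$ rather than "piling up" against it; this is the delicate point requiring careful bookkeeping and is where the argument departs most from the $\beta=0$ case of \cite{Hu2009}.
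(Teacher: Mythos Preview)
Your extrapolation set-up is sound through the estimate $\tau(\Phi(g_k)) \lesssim (k+1)^{\alpha+\beta} 2^k \tau(e_k)^{1/p_k}$ and its two-regime summation. The gap is exactly where you flag it: the passage from $\sum_k \tau(\Phi(g_k)) \lesssim 1$ to $\|g\|_{L\log^\beta L} \lesssim 1$ with $g = \sum_k g_k$. Your proposed mechanism for this---that the $g_k$ inherit an ``essentially nested spectral structure'' from the dyadic pieces $f_k$---is false. The $f_k$ are spectrally disjoint, but the Markovian maps $S_m$ are not spectral; applying them and then taking an $L_{p_k}[\ell_\infty]$-majorant destroys any disjointness, and truncating at height $2^k$ does not restore any nesting. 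In fact the implication $\sum_k \tau(\Phi(g_k)) \lesssim 1 \Rightarrow \tau(\Phi(\sum_k g_k)) \lesssim 1$ is simply false for $\Phi(t) = t(1+\log_+ t)^\beta$: take $g_1 = \cdots = g_N = \chi_E$ with $\tau(E) = 1/N$; then $\sum_k \tau(\Phi(g_k)) = 1$ while $\tau(\Phi(\sum_k g_k)) = (1+\log N)^\beta$. These $g_k$ are compatible with your constraint $\|g_k\|_\infty \le 2^k$, so the $\Delta_2$ condition plus the $L_\infty$-bound alone cannot save the step.

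The fix is to avoid the modular altogether and sum \emph{norms}, using the embedding inequality $\|h\|_{L\log^\beta L} \lesssim_{(\beta)} (p-1)^{-\beta} \|h\|_p$ valid for finite $\M$ and $1 < p \le \infty$. Applied at $p = p_k$ this gives $\|g_k\|_{L\log^\beta L} \lesssim (k+1)^{\beta} \|g_k\|_{p_k} \lesssim (k+1)^{\alpha+\beta} \|f_k\|_{p_k}$, and then $\|g\|_{L\log^\beta L} \le \sum_k \|g_k\|_{L\log^\beta L}$ by the triangle inequality. The resulting sum $\sum_k (k+1)^{\alpha+\beta}\|f_k\|_{p_k}$ is controlled by the same two-regime split you already carried out. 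This is in fact the route the paper takes, phrased there via the atomic decomposition of $L\log^{\alpha+\beta}L$: each atom plays the role of one of your $f_k$, the choice $p-1 = (1+\log_+(1/\tau(e)))^{-1}$ is your $p_k$, and the embedding $\|\cdot\|_{L\log^\beta L} \lesssim (p-1)^{-\beta}\|\cdot\|_p$ is invoked at the level of $L\log^\beta L[\ell_\infty]$ before summing. Your dyadic spectral decomposition and the paper's atomic decomposition are two sides of the same coin; the essential point you are missing is to stay in the norm rather than the modular on the target side.
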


The proof uses as a key ingredient the atomic decomposition of noncommutative $L \log^s L$ spaces obtained in \cite[Appendix]{HongSun2018} as direct generalization of \cite{Tao2001Extrapolation}. Indeed, every element of the unit ball of $L \log^s L$ can be expressed as a combination of \emph{atoms}
\[
  f = \sum_{k = 0}^\infty \lambda_k a_k
\]
where $\lambda_k \geq 0$ satisfy that $\sum_{k = 1} \lambda_k \leq 1$. An element $a \in \M$ is an 
atom for $L \log^s L$ iff it is supported in projection $e \in \Prj(\M)$ with $\tau(e) = 2^k$ for some
$k \geq 0$ and
\[
  \| a \|_\infty \leq \frac1{\tau(e)} \, \Big( 1 + \log_+ \Big( \frac1{\tau(e)} \Big) \Big)^{-s}.
\]
The atomic decomposition also gives an equivalence in norm
\[
  \| f \|_{L \log^s L}
  \, \sim \, 
  \inf \left\{ \sum_{k = 0}^\infty \lambda_k
              : f = \sum_{k=0}^\infty \lambda_k a_k \quad
                \mbox{ for } a_k \mbox{ atom }
       \right\}
\]
\begin{proof}
  Apart from the atomic decomposition, the other ingredient is the fact that, for finite algebras $\M$ we have
  \[
    \| f \|_{L \log^s L} \lesssim_{(s)} \frac1{(p - 1)^s} \| f \|_p,
  \]
  for every $1 < p \leq \infty$, see \cite{Bennett1988}. Start by noticing that, by Minkowsky's inequality, it is enough to prove the result for atoms. Let us fix $a \in L \log^{\alpha + \beta} L(\M)$ an atom. We have that
  \begin{eqnarray}
    \big\| \big( S_m(a) \big)_m \big\|_{L \log^\beta L[\ell_\infty]}
      & \lesssim_{(s)} & \frac1{(p - 1)^\beta} \, \big\| \big( S_m(a) \big)_m \big\|_{L_p[\ell_\infty]} \nonumber\\
      & \lesssim & \frac1{(p - 1)^{\alpha + \beta}} \, \big\| a \big\|_{p} \label{eq:ProofAtomic.2}\\
      & \leq & \frac1{(p - 1)^{\alpha + \beta}} \, \tau(e)^\frac1{p} \, \frac1{\tau(e)} \Big( 1 + \log_+ \frac1{\tau(e)} \Big)^{-(\alpha + \beta)} \label{eq:ProofAtomic.3}
  \end{eqnarray}
  for every $1 < p \leq \infty$, we have used the hypothesis on \eqref{eq:ProofAtomic.2}
  and the conditions on the atom  in \eqref{eq:ProofAtomic.3}. We can now fix $p$ depending on the atom to be
  \[
    p - 1 =  \Big( 1 + \log_+ \Big( \frac1{\tau(e)} \Big) \Big)^{-1}.
  \]
  That gives
  \[
    \big\| \big( S_m(a) \big) \big\|_{L \log^\beta L[\ell_\infty]}
    \, \leq \,
    \tau(e)^{(1-p)/p}
  \]
  and we conclude noticing that the last expression is uniformly bounded by the choice of $p$ relative to $\tau(e)$. 
\end{proof}

As a corollary, we obtain the following

\begin{corollary}
  \label{cor:MultilogThreeEx}
  Let $(\M,\tau)$ be a finite von Neumann algebra, $T$ a Markovian operator
  and $\EE_n:\M \to \M_m \subset  \M$ the conditional
  expectations associated to a filtration $(\M_m)$, we have that, for every
  $\beta \geq 0$
  \begin{enumerate}[leftmargin=1.2cm, ref={\rm (\roman*)}, label={\rm (\roman*)}]
    \item \label{itm:multilogarithms.1} 
    $\displaystyle{
      \big\| (\EE_n)_n: L \log^{\beta + 2} L(\M) \to L \log^{\beta} L (\M;\ell_\infty) \big\|
      \, < \, 
      \infty.
    }$
    \item \label{itm:multilogarithms.2}
    $\displaystyle{
      \big\| (M_n(T))_n: L \log^{\beta + 2} L(\M) \to L \log^{\beta} L (\M;\ell_\infty) \big\|
      \, < \, 
      \infty.
    }$
    \item \label{itm:multilogarithms.3} 
    $\displaystyle{
      \big\| (T^n)_n: L \log^{\beta + 2} L(\M) \to L \log^{\beta} L (\M;\ell_\infty) \big\|
      \, < \, 
      \infty,
    }$
    whenever $T$ is symmetric.
  \end{enumerate}
\end{corollary}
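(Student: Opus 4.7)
The plan is to apply Proposition \ref{prp:multilogarithms} with $\alpha = 2$ to each of the three families of operators appearing in the statement. The proposition says that whenever a Markovian family $(S_m)_m$ satisfies the $L_p$-maximal estimate $\|(S_m)_m : L_p(\M) \to L_p(\M; \ell_\infty)\| \lesssim \max\{1, (p-1)^{-\alpha}\}$, we automatically obtain the corresponding improvement $L\log^{\beta+\alpha}L \to L\log^\beta L(\M;\ell_\infty)$ for every $\beta \geq 0$. Thus all three assertions reduce to checking that each relevant maximal operator has $L_p$-norm growing no faster than $(p-1)^{-2}$ as $p \to 1^+$.

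For \ref{itm:multilogarithms.1}, the family $(\EE_n)_n$ consists of normal unital completely positive $\tau$-preserving projections, hence Markovian. The required $L_p$-estimate with exponent $2$ is exactly Theorem \ref{thm:MartingalesLpMax}(i), so Proposition \ref{prp:multilogarithms} applied with $\alpha = 2$ yields the stated bound. For \ref{itm:multilogarithms.2}, each ergodic mean $M_n(T) = (n+1)^{-1}\sum_{k=0}^n T^k$ is again Markovian (a convex combination of Markovian operators), and Theorem \ref{thm:LpBoundErgodic}\ref{itm:LpBoundErgodic.1} provides the $L_p$-maximal estimate with the same exponent $\alpha = 2$; the proposition again closes the argument. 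For \ref{itm:multilogarithms.3}, when $T$ is symmetric Markovian, Theorem \ref{thm:LpBoundErgodic}\ref{itm:LpBoundErgodic.4} states that the analogous $L_p$-maximal bound with exponent $2$ holds for the powers $(T^n)_n$ themselves (not just their Cesàro means), and once more Proposition \ref{prp:multilogarithms} with $\alpha = 2$ delivers the conclusion.

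There is essentially no obstacle here: the corollary is a bookkeeping consequence of the proposition combined with the three already-cited $L_p$-maximal inequalities. The only thing worth double-checking is that each family really fits the Markovian hypothesis demanded by Proposition \ref{prp:multilogarithms}, which is immediate in all three cases (conditional expectations, Cesàro means of Markovian operators, and powers of a symmetric Markovian operator).
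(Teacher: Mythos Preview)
Your proof is correct and matches the paper's intended argument: the corollary is stated immediately after Proposition~\ref{prp:multilogarithms} with the phrase ``As a corollary, we obtain the following'' and no further proof, so it is meant to follow exactly as you describe---apply the proposition with $\alpha=2$ using the $L_p$-maximal bounds from Theorems~\ref{thm:MartingalesLpMax} and~\ref{thm:LpBoundErgodic}.
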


Now, we have all the ingredients to formulate our result in the multiparametric case.
In particular, we obtain that

\begin{corollary}
  \label{cor:MartingalesMultiIndex}
  Let $\M^{[i]}$ be a collection of von Neumann algebras, for $i \in \{1, 2, ..., d\}$,
  and assume that there are filtrations $\M_m^{[i]} \subset \M^{[i]}$ approximating
  each algebra and denote by $\EE_m^{[i]}$ their conditional expectations. 
  Let $(\M, \tau) = (\M^{[1]} \weaktensor \M^{[2]} \weaktensor ... \weaktensor\M^{[d]},
  \tau_1 \otimes \tau_2 ... \, \otimes \tau_d)$. It holds that
  \[
    \Big\| \, {\limsup_{n_1 \, ... \, n_d \to \infty}}^{+}
              \big(\EE_{n_1}^{[1]} \otimes \EE_{n_2}^{[2]} ... \, \otimes \EE_{n_d}^{[d]} \big)(f)
    \Big\|_1
    \, \lesssim \,
    \| f \|_{L \log^{2 \, (d - 1)} L(\M)}.
  \]
  As a consequence $\EE_{n_1}^{[1]} \otimes \EE_{n_2}^{[2]}  ... \, \otimes \EE_{n_d}^{[d]}(f) \to f$
  bilaterally almost uniformly as $n_1, n_2, ... n_d \to \infty$, for every $f \in L \log^{2 \, (d - 1)} L(\M)$.
  The same result holds after changing the tensor products by free products
  as in Corollary \ref{cor:MultErgodic}.
\end{corollary}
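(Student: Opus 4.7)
My plan is to argue by induction on $d$, using Corollary \ref{cor:MultiMartingale} as the base case $d = 2$ (the case $d = 1$ is Cuculescu's theorem combined with Proposition \ref{prp:BAUImpLimsup}). In the inductive step I will peel off the last factor $\M^{[d]}$ and exchange two logarithmic powers for one extra parameter in the $\limsup^{+}$, using the intermediate Orlicz bound of Corollary \ref{cor:MultilogThreeEx}.\ref{itm:multilogarithms.1}.

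Concretely, I will split the operator as $\EE_{n_1}^{[1]} \otimes \cdots \otimes \EE_{n_d}^{[d]} = A_{n_1, \ldots, n_{d-1}} \circ B_{n_d}$ with
\[
  B_{n_d} \, := \, \Id \otimes \cdots \otimes \Id \otimes \EE_{n_d}^{[d]},
  \qquad
  A_{n_1, \ldots, n_{d-1}} \, := \, \EE_{n_1}^{[1]} \otimes \cdots \otimes \EE_{n_{d-1}}^{[d-1]} \otimes \Id.
\]
Then $B_{n_d}$ is a single-parameter martingale operator on $\M$, namely the conditional expectation onto the nested subalgebras $\M^{[1]} \weaktensor \cdots \weaktensor \M^{[d-1]} \weaktensor \M_{n_d}^{[d]}$, whereas $A$ is a $(d-1)$-parameter martingale on $\M$ associated to the filtrations $(\M_n^{[i]})_n$ for $i \leq d-2$ together with the enlarged filtration $(\M_n^{[d-1]} \weaktensor \M^{[d]})_n$ on $\M^{[d-1]} \weaktensor \M^{[d]}$. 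Invoking Corollary \ref{cor:MultilogThreeEx}.\ref{itm:multilogarithms.1} on $B$ with $\beta = 2(d - 2)$ and then the Orlicz characterization \eqref{eq:AltnormOrlicz}, I will produce, for each positive $f \in L \log^{2(d-1)} L(\M)$, some $h \in L \log^{2(d-2)} L(\M)_+$ with $B_{n_d}(f) \leq h$ uniformly in $n_d$ and $\| h \|_{L \log^{2(d-2)} L} \lesssim \| f \|_{L \log^{2(d-1)} L}$.

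Since $A$ is positivity preserving, this forces $A(B(f)) \leq A(h)$ for every choice of indices. Monotonicity of the $\limsup^{+}$ seminorm on positive families |which follows from Proposition \ref{prp:AltNorm} together with monotonicity of $\| \cdot \|_{L_1[\ell_\infty]}$| combined with Lemma \ref{lem:MultiInd} (since $A(h)$ does not depend on $n_d$) will reduce matters to bounding $\big\| {\limsup_{n_1, \ldots, n_{d-1} \to \infty}}^{+} A(h) \big\|_1$. That last quantity is exactly a $(d-1)$-parameter martingale $\limsup^{+}$ in the sense already handled, so the induction hypothesis yields a bound by $\| h \|_{L \log^{2(d-2)} L} \lesssim \| f \|_{L \log^{2(d-1)} L}$ and closes the induction. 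A standard decomposition into positive parts extends the estimate to general $f$, and the b.a.u.\ conclusion follows from Proposition \ref{prp:LimsupImpBAU} applied to the weak-$\ast$ dense class $\S = \bigcup_n \M_n^{[1]} \algtensor \cdots \algtensor \bigcup_n \M_n^{[d]}$, on which uniform convergence is immediate. The free-product version proceeds identically after replacing $\weaktensor$ by $\ast$ and using Theorem \ref{thm:FreeProductExtension}\ref{itm:FreeProductExtension.2} to realize $\Id \ast \EE_n^{[d]}$ as a conditional expectation on $\M^{[1]} \ast \cdots \ast \M^{[d]}$. The only nontrivial check I foresee is confirming that $\Id \otimes \EE_n^{[d]}$ (and its free analogue) is indeed a conditional expectation associated with a nested weak-$\ast$ dense filtration of $\M$, so that Corollary \ref{cor:MultilogThreeEx} applies verbatim; everything else is bookkeeping of Orlicz exponents.
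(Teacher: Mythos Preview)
Your argument is correct and follows essentially the same route as the paper: peel off one martingale factor at a time via Corollary \ref{cor:MultilogThreeEx}\ref{itm:multilogarithms.1} (trading two logarithmic powers for a uniform majorant), use positivity and Lemma \ref{lem:MultiInd} to drop the redundant index, and conclude with Proposition \ref{prp:BAUImpLimsup}/\ref{prp:LimsupImpBAU}. The only cosmetic difference is that you phrase it as a formal induction on $d$ with base case $d=2$, whereas the paper unrolls the same iteration all the way down to a single parameter and invokes Cuculescu's b.a.u.\ convergence directly at the end.
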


\begin{proof}
The proof is just an iteration of the Theorem \ref{thm:MainTool} using the positivity of the conditional expectations.
Indeed, starting with an element $f \in L \log^{2 \, (d - 1)} L$ gives, by Proposition \ref{cor:MultilogThreeEx} \ref{itm:multilogarithms.1} an element $f_1 \in L \log^{2 \, (d - 1)} L$ such that
\[
  \Id \otimes \EE_{n_d}^{[d]}(f) \leq f_1
\]
and its $L \log^{2 \, (d - 2)} L$ norm is controlled by the $L \log^{2 \, (d - 2)} L$ norm of $f$. Applying this process $d - 1$ times gives an element $f_{d-1} \in L_1$ and we can apply the bilateral almost uniform convergence of
$\EE_{n_1}^{[1]} \otimes \Id(f)$ for every $f \in L_1$ to obtain a bound of the $\limsup^+$ after Proposition \ref{prp:BAUImpLimsup}.
\end{proof}

The same result holds for ergodic means giving the following 

\begin{corollary}
  \label{cor:ErgodicMultiIndex}
  Let $\M^{[i]}$ be a collection of finite von Neumann algebras, for $i \in \{1, 2, ..., d\}$,
  and $T_i: \M^{[i]} \to \M^{[i]}$ be Markovian operators, fix
  $(M, \tau) = (\M^{[1]} \weaktensor \M^{[2]} \weaktensor \cdots \M^{[d]},
  \tau_1 \otimes \tau_2 ... \, \tau_d)$, it holds that
  \[
    \Big\| \, {\limsup_{n_1 \, ... \, n_d \to \infty}}^{+}
              \big(M_{n_1}(T_1) \otimes M_{n_2}(T_2) ... \, \otimes M_{n_d}(T_d) \big)(f)
    \Big\|_1
    \, \lesssim \,
    \| f \|_{L \log^{2 \, (d - 1)} L(\M)}
  \]
  As a consequence $M_{n_1}(T_1) \otimes M_{n_2}(T_2) ... \otimes M_{n_d}(T_d)(f)
  \to \big( F_1 \otimes F_2 ... \otimes F_d \big)f$
  
  Furthermore, if $T_2$, $T_3$, ..., $T_d$ are symmetric, we have that
  \[
    \Big\| \, {\limsup_{n_1 \, ... \, n_d \to \infty}}^{+}
              \big(M_{n_1}(T_1) \otimes T_2^{n_2} ... \, \otimes T_d^{n_d} \big)(f)
    \Big\|_1
    \, \lesssim \,
    \| f \|_{L \log^{2 \, (d - 1)} L(\M)}.
  \]
  The same result holds after changing the tensor products by free products
\end{corollary}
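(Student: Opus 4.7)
The plan is to mirror the argument sketched for Corollary \ref{cor:MartingalesMultiIndex}, replacing the conditional expectations $\EE_{n_i}^{[i]}$ with the ergodic means $M_{n_i}(T_i)$, and invoking the analogous maximal inequality for Markovian ergodic means in place of the martingale one. The key inputs I will use are: (a) the iterated Orlicz maximal inequality of Corollary \ref{cor:MultilogThreeEx} \ref{itm:multilogarithms.2}, which yields $\big\|(M_n(T))_n : L\log^{\beta+2}L(\M) \to L\log^{\beta}L(\M;\ell_\infty)\big\| < \infty$ for any Markovian $T$; (b) the bilateral almost uniform convergence $M_n(T_1)(f) \to F_1(f)$ for every $f \in L_1(\M)$, from Theorem \ref{thm:LpBoundErgodic} \ref{itm:LpBoundErgodic.3}; and (c) positivity preservation of each $M_{n_i}(T_i)$ and the monotonicity of the $\limsup^+$-seminorm under the natural order of $L_1(\M;\ell_\infty)$, via Proposition \ref{prp:AltNorm}.

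The iteration proceeds as follows. Starting from $f \in L\log^{2(d-1)}L(\M)_+$, apply \ref{itm:multilogarithms.2} to the ergodic means $(\Id^{\otimes(d-1)} \otimes M_{n_d}(T_d))_{n_d}$ in the last coordinate, using the equivalent norm \eqref{eq:AltnormOrlicz}, to obtain $f_1 \in L\log^{2(d-2)}L(\M)_+$ dominating all of them and whose norm is $\lesssim \|f\|_{L\log^{2(d-1)}L}$. Repeat in coordinate $d-1$, using positivity to preserve the domination, and so on down to coordinate $2$, until after $d-1$ steps we arrive at an element $f_{d-1} \in L_1(\M)_+$ dominating $(\Id \otimes M_{n_2}(T_2) \otimes \cdots \otimes M_{n_d}(T_d))f$ for all multi-indices, with $\|f_{d-1}\|_1 \lesssim \|f\|_{L\log^{2(d-1)}L}$. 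At this point Theorem \ref{thm:MainTool} applies with $A_{n_1} = M_{n_1}(T_1) \otimes \Id^{\otimes(d-1)}$ and $B_{(n_2,\ldots,n_d)} = \Id \otimes M_{n_2}(T_2) \otimes \cdots \otimes M_{n_d}(T_d)$: condition \ref{itm:MainTool.2} is provided by (b) above extended to the tensor algebra, and the $\limsup^+$-bound follows after one more application of Proposition \ref{prp:BAUImpLimsup} to the dominator $f_{d-1}$. Bilateral almost uniform convergence then follows from Proposition \ref{prp:LimsupImpBAU} applied to the weak-$*$ dense class $\S$ built from the one-parameter decompositions described in Proposition \ref{prp:ErgodicNorm}\ref{itm:ErgodicNorm.3} in each coordinate.

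For the symmetric variant, the same scheme works after replacing, in coordinates $2,\ldots,d$, the ergodic means $M_{n_i}(T_i)$ by the powers $T_i^{n_i}$, since Corollary \ref{cor:MultilogThreeEx} \ref{itm:multilogarithms.3} supplies exactly the same maximal inequality under the symmetry hypothesis; the first coordinate is kept as an ergodic mean so that b.a.u.\ convergence on $L_1$ remains available. For the free product case, nothing changes: by Theorem \ref{thm:FreeProductExtension} the free products $M_{n_1}(T_1) \ast \cdots \ast M_{n_d}(T_d)$ are Markovian on $(\M^{[1]} \ast \cdots \ast \M^{[d]}, \tau_1 \ast \cdots \ast \tau_d)$, and the same Orlicz maximal inequalities apply, so the identical iteration yields the conclusion.

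The only delicate point I foresee is ensuring that the iterative domination actually interacts well with the $\limsup^+$ in several indices simultaneously; this is where Lemma \ref{lem:MultiInd} becomes essential, letting us collapse the multi-index $\limsup^+$ down to a one-index one (in the first coordinate) after all but the first ergodic mean have been absorbed into the dominator. Apart from this bookkeeping, the proof is a direct transcription of the martingale argument.
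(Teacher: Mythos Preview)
Your proposal is correct and follows essentially the same route as the paper: iterate Corollary \ref{cor:MultilogThreeEx}\ref{itm:multilogarithms.2} (or \ref{itm:multilogarithms.3} in the symmetric case) to produce a chain of dominators descending from $L\log^{2(d-1)}L$ to $L_1$, then invoke b.a.u.\ convergence of the first-coordinate ergodic means on $L_1$ together with Proposition \ref{prp:BAUImpLimsup} and Lemma \ref{lem:MultiInd}. The paper does not spell out a separate proof for this corollary, simply stating it after the martingale version, so your write-up is in fact more detailed than what appears there.
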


\section{Almost uniform converge in $\FF_2$ \label{sec:F2}}
We will also denote by $\CC[\FF_2] \subset \L \FF_2$ the free group algebra given by finite combinations of elements $\lambda_\omega$ in $\L \FF_2$. Clearly $\CC[\FF_2]$ is weak-$\ast$ dense in $\L \FF_2$ and norm dense in every $L_p(\L \FF_2)$ for $1 \leq p < \infty$ as well as in the Orlicz spaces $L \log^\alpha L (\L \FF_2)$. Again, it is immediate that $\| S_t(f) - f \|_\infty \to 0$ as $t \to 0$ for every $f \in \CC[\FF_2]$. As stated in the introduction, see Problem \ref{prb:ConvergenceFreeIntro}, the boundedness of the maximal operator associated to $(S_t)_t$ will give bilateral almost uniform convergence to the initial data as $t \to 0$. Sadly, that result is still unknown in $L_1$. Here, we will improve the best known result by giving a class of operators $\C$ between $L \log^2 L(\L \FF_2)$ and $L_1(\L \FF_2)$ for which bilateral almost uniform convergence holds. 

Let $\langle\langle a \rangle\rangle$ and $\langle\langle b \rangle\rangle$ be the normal subgroups generated by $a$ and $b$ respectively. Denote its associated quotient maps by $n_a: \FF_2 \to \FF_2 / \langle\langle b \rangle\rangle \cong \ZZ$ and $n_b: \FF_2 \to \FF_2 / \langle\langle a \rangle\rangle \cong \ZZ$. For some reduced word $\omega = s_1^{n_1} s_2^{n_2} ... s_\ell^{n_\ell}$, where each $s_i \in \{a,b\}$, the quotient maps are given by
\begin{eqnarray*}
  n_a \big( s_1^{n_1} s_2^{n_2} ... s_\ell^{n_\ell} \big) & = & \sum_{i | s_i = a} n_i,\\
  n_b \big( s_1^{n_1} s_2^{n_2} ... s_\ell^{n_\ell} \big) & = & \sum_{i | s_i = b} n_i.
\end{eqnarray*}
Intuitively $n_a$ and $n_b$ count the number of $a$'s and the number of $b$'s with multiplicity. 
Let $\Sigma \subset \FF_2$ be the subset of reduced words satisfying that there are no sign changes in the $a$ or $b$. I.e. for every word $\omega$, all of their $a$'s have positive (resp. negative) exponent and the same holds for the $b$'s. The key observation that will allow us to apply transference techniques is that, for every $\omega \in \Sigma$, we do have that
\[
  | \omega | = | n_a(\omega) | + | n_b(\omega) |.
\]
Let $\Theta: \CC[\FF_2] \to \CC[\FF_2] \algtensor \CC[\ZZ] \algtensor \CC[\ZZ]$ be the map given by linear extension of $\lambda_\omega \mapsto \lambda_\omega \otimes \lambda_{n_a(\omega)} \otimes \lambda_{n_b(\omega)}$. We have the following.
\begin{proposition}
  \label{prp:ExtensionTheta}
  The map $\Theta$ extends to a normal and faithful and trace preserving $\ast$-homomorphism
  \[
    \Theta: \L \FF_2 \longrightarrow \L \FF_2 \weaktensor \L(\ZZ^2) \cong \L \FF_2 \weaktensor L_\infty(\TT^2).
  \]
\end{proposition}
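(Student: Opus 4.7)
The plan is to realize $\Theta$ as the functorial $*$-homomorphism associated to an injective homomorphism of the underlying groups, and then to appeal to the standard fact that such group-level injections extend to trace-preserving normal embeddings of group von Neumann algebras. First I would check that $n_a, n_b : \FF_2 \to \ZZ$ are group homomorphisms: indeed, $n_a$ factors as the composition of the quotient by the normal closure $\langle\langle b \rangle\rangle$ with the canonical identification $\FF_2 / \langle\langle b\rangle\rangle \cong \ZZ$, and symmetrically for $n_b$. Consequently, the diagonal assignment
\[
  \pi : \FF_2 \longrightarrow \FF_2 \times \ZZ \times \ZZ, \qquad
  \pi(\omega) = \big( \omega, n_a(\omega), n_b(\omega) \big),
\]
is a group homomorphism, and it is injective because the first coordinate is just the identity.

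Next I would invoke the principle that an injective homomorphism $\pi : G \hookrightarrow H$ of discrete groups induces a normal, faithful, trace-preserving unital $*$-homomorphism $\L G \to \L H$ defined on generators by $\lambda_g^G \mapsto \lambda_{\pi(g)}^H$. Applied to the $\pi$ above, and composed with the canonical identifications
\[
  \L (\FF_2 \times \ZZ \times \ZZ)
  \;\cong\; \L \FF_2 \weaktensor \L \ZZ \weaktensor \L \ZZ
  \;\cong\; \L \FF_2 \weaktensor L_\infty(\TT^2)
\]
(the first from group von Neumann algebras of direct products, the second from Pontryagin duality applied to the abelian factor $\ZZ^2$), this produces a map agreeing with the given $\Theta$ on the weak-$*$ dense $*$-subalgebra $\CC[\FF_2]$. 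Since $\CC[\FF_2]$ determines a normal map on $\L\FF_2$ uniquely, this is precisely the sought extension.

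The only non-formal point is therefore the extension principle itself, which I would justify via GNS. The algebraic map $\lambda_\omega \mapsto \lambda_{\pi(\omega)}$ is trace preserving by injectivity of $\pi$, since
\[
  (\tau \otimes \tau \otimes \tau)\big( \lambda_{\pi(\omega)} \big)
  = \delta_{\pi(\omega), e} = \delta_{\omega, e} = \tau(\lambda_\omega).
\]
Trace preservation together with the $*$-algebra structure gives an $L_2$-isometric embedding $L_2(\L\FF_2) \hookrightarrow L_2(\L(\FF_2 \times \ZZ^2))$ intertwining the respective left regular actions, and taking bicommutants yields a normal $*$-homomorphism between the von Neumann algebras. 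Faithfulness is then automatic from trace preservation and faithfulness of the trace on $\L\FF_2$. The main (mild) obstacle is just packaging this extension argument cleanly; everything else is essentially functoriality of $G \mapsto \L G$ applied to the explicit injective group homomorphism $\pi$.
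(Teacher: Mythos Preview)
Your argument is correct but proceeds along a different line from the paper. The paper invokes Fell's absorption principle directly: it writes down the unitary $V$ on $\ell_2(\FF_2)\otimes_2\ell_2(\ZZ^2)$ sending $\delta_\omega\otimes\delta_{k_1}\otimes\delta_{k_2}$ to $\delta_\omega\otimes\delta_{k_1-n_a(\omega)}\otimes\delta_{k_2-n_b(\omega)}$ and checks that conjugation by $V$ carries $\lambda_\omega\otimes\lambda_{n_a(\omega)}\otimes\lambda_{n_b(\omega)}$ to $\lambda_\omega\otimes\1$. This spatial implementation immediately exhibits $\Theta$ as a normal $\ast$-homomorphism whose image is unitarily conjugate to $\L\FF_2\otimes\1$, from which trace preservation and faithfulness are read off. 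You instead recognise $\Theta$ as the von Neumann algebra map induced by the injective group homomorphism $\omega\mapsto(\omega,n_a(\omega),n_b(\omega))$ into $\FF_2\times\ZZ^2$, and appeal to the general functorial principle that injective homomorphisms of discrete groups yield trace-preserving normal embeddings of the associated group von Neumann algebras. Your route is slightly more abstract and portable (it would apply verbatim with $(n_a,n_b)$ replaced by any group homomorphism out of $\FF_2$), while the paper's Fell absorption argument is more concrete and additionally supplies an explicit unitary identifying $\Theta(\L\FF_2)$ with $\L\FF_2\otimes\1$ inside the tensor product.
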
 
The proof is a routine application of Fell's absorption principle. Indeed, after
identifying $\ell_2(\ZZ^2)$ with $\ell_2(\ZZ) \otimes_2 \ell_2(\ZZ)$,
the map $V: \ell_2(\FF_2) \otimes_2 \ell_2(\ZZ^2) \to \ell_2(\FF_2) \otimes_2 \ell_2(\ZZ^2)$ given by linear extension of
\[
  \delta_\omega \otimes \delta_{k_1} \otimes \delta_{k_2}
  \longmapsto
  \delta_\omega \otimes \delta_{k_1 - n_a(\omega)} \otimes \delta_{k_2 - n_b(\omega)}
\]
is a unitary and satisfies that the following diagram commutes.
\begin{equation*}
  \label{dia:FellAbsorption}
  \xymatrix@C=7em{
    \ell_2(\FF_2) \otimes_2 \ell_2(\ZZ^2) \ar[r]^{V} \ar[d]^{\lambda_\omega \otimes \lambda_{n_a(\omega)} \otimes \lambda_{n_a(\omega)}} & \ell_2(\FF_2) \otimes_2 \ell_2(\ZZ^2) \ar[d]^{\lambda_\omega \otimes \1 \otimes \1}\\
    \ell_2(\FF_2) \otimes_2 \ell_2(\ZZ^2) \ar[r]^{V} & \ell_2(\FF_2) \otimes_2 \ell_2(\ZZ^2)
  }
\end{equation*}
The result follows after identifying $\L \ZZ^2$ with $L_\infty(\TT^2)$. The fact that $\Theta$ is trace preserving, where the natural trace of $\L \FF_2 \otimes L_\infty(\TT^2)$ is given by the tensor product of $\tau$ and integration against the Haar measure of $\TT^2$, is trivial. Therefore $\Theta$ is faithful and its image is isomorphic to $\L \FF_2$. Given any $\bar\eta \in \TT^2$, we will denote by $\Theta_{\bar\eta}$ the trace preserving automorphism of $\L \FF_2$ given by 
\[
  \Theta_{\bar\eta}(f) = \mathrm{ev}_{\bar\eta} \circ \Theta(f),
\]
where $\mathrm{ev}_{\bar\theta}$ denotes the evaluation in $\bar\theta \in \TT^2$. Alternatively, one can see $\Theta_{\bar\theta}: \L \FF_2 \to \L \FF_2$ as the map given by
\[
  \Theta_{\bar\theta} \Big( \sum_{\omega \in \FF_2} \widehat{f}(\omega) \lambda_\omega \Big)
  = 
  \sum_{\omega \in \FF_2}
    e^{2 \pi i \big( n_a(\omega) \theta_1 + n_b(\omega) \theta_2 \big)} \widehat{f}(\omega) \lambda_\omega.
\]
We shall denote by $\L \FF_2 {|}_{\Sigma} \subset \L \FF_2$ the subset of elements supported in $\Sigma$, i.e:
\[
  \L \FF_2 {|}_{\Sigma}
  =
  \Big\{ f \in \L \FF_2 : f = \sum_{\omega \in \Sigma \subset \FF_2} \widehat{f}(\omega) \, \lambda_\omega  \Big\}.
\]
Similarly, we will denote by $L_p(\L \FF_2){|}_\Sigma$ or $L \log^\alpha L(\L \FF_2){|}_\Sigma$ the subsets of elements $f \in L_p(\L \FF_2)$ or $L \log^\alpha L(\L \FF_2)$ such that $\tau(f \lambda_g^\ast) = 0$ for every $g \not\in \Sigma$. Let $P_s:L_\infty(\TT^d) \to L_\infty(\TT^d)$  be the Poisson semigroup of the torus, given by
\[
  P_s(f) = 
  P_s \Big( \sum_{k \in \ZZ^d} \widehat{f}(k) e^{2 \pi i k \theta} \Big)
  = 
  \sum_{k \in \ZZ^d} \widehat{f}(k) e^{-s|k|} e^{2 \pi i k \theta}
\]
We have that the following diagram commutes
\begin{equation*}
  \label{dia:SigmaLinft}
  \xymatrix@C=7em{
    \L \FF_2{|}_\Sigma \ar[d]^{S_t} \ar[r]^-{\subset} & \L \FF_2 \ar[r]^-{\Theta} & \L \FF_2 \weaktensor L_\infty(\TT^2) \ar[d]^{\Id \otimes P_t} \\
    \L \FF_2{|}_\Sigma \ar[r]^-{\subset} & \L \FF_2 \ar[r]^-{\Theta} & \L \FF_2 \weaktensor L_\infty(\TT^2)
  }
\end{equation*}
To extend the commutativity of the diagram above to the $L_1$. We need the following proposition.

\begin{proposition}
  \label{prp:IsometricTheta}
  Let $\Theta: \L \FF_2 \longrightarrow \L \FF_2 \weaktensor L_\infty(\TT^2)$ be as before. We have that
  \begin{enumerate}[leftmargin=1.5cm, ref={\rm (\roman*)}, label={\rm (\roman*)}]
    \item \label{itm:IsometricTheta.1}
    $\Theta: L_1(\L \FF_2) \longrightarrow L_1(\L \FF_2 \otimes L_\infty(\TT^2))$
    extends to a positivity-preserving isometry.
    \item \label{itm:IsometricTheta.2}
    $\Theta: L_1^\loc(\L \FF_2; \ell_\infty/c_0) \longrightarrow L_1(\L \FF_2 \otimes L_\infty(\TT^2); \ell_\infty/c_0)$
    extends to a order-preserving quasi-isometry of the positive cones.
  \end{enumerate}
\end{proposition}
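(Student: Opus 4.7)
For (i), the plan is to leverage that Proposition~\ref{prp:ExtensionTheta} already makes $\Theta$ a normal, faithful, trace-preserving $\ast$-homomorphism into $\M':=\L\FF_2\weaktensor L_\infty(\TT^2)$. The $\ast$-homomorphism property yields $|\Theta(f)|=\Theta(|f|)$ via the continuous functional calculus, and trace preservation then forces $\|\Theta(f)\|_1=\tau(|f|)=\|f\|_1$ on the norm-dense algebra $\CC[\FF_2]$; density extends this to an $L_1$-isometry, with positivity preservation inherited from $\Theta$ being a $\ast$-homomorphism.

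The crux of (ii) is the identification $\M'\cong L_\infty(\TT^2;\L\FF_2)$, under which $\Theta(f)(\bar\theta)=\Theta_{\bar\theta}(f)$ and each $\Theta_{\bar\theta}$ is a trace-preserving $\ast$-automorphism of $\L\FF_2$. Projections $e'\in\Prj(\M')$ decompose fiberwise as $\bar\theta\mapsto e'(\bar\theta)\in\Prj(\L\FF_2)$ with $\tau'(e')=\int_{\TT^2}\tau(e'(\bar\theta))\,d\bar\theta$. I would first combine part~(i) with~\eqref{eq:L1Linfty} to obtain, for positive $(f_n)_n\subset L_1(\L\FF_2)_+$, the exact identity $\|\Theta(f_n)\|_{L_1(\M';\ell_\infty)}=\|(f_n)\|_{L_1(\L\FF_2;\ell_\infty)}$ (one direction via $g\ge f_n\Rightarrow\Theta(g)\ge\Theta(f_n)$, and the other by applying the conditional expectation $\EE_\Theta$ onto $\Theta(\L\FF_2)$ to any dominating $g'\ge\Theta(f_n)$). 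Coupled with Proposition~\ref{prp:AltNorm}, the choice $e'=\Theta(e)$ for $e\in\Prj(\L\FF_2)$ delivers the easy inequality $\|\limsup^+\Theta(f_n)\|_1\le\|\limsup^+ f_n\|_1$.

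For the reverse inequality, the core ingredient is a Fubini-type estimate
\[
\big\|(g'_n)_{n>N}\big\|_{L_1(\M';\ell_\infty)} \,\ge\, \int_{\TT^2}\big\|(g'_n(\bar\theta))_{n>N}\big\|_{L_1(\L\FF_2;\ell_\infty)}\,d\bar\theta
\]
for positive sequences, obtained from~\eqref{eq:L1Linfty} by noting that $h'\ge g'_n$ in $\M'$ implies $h'(\bar\theta)\ge g'_n(\bar\theta)$ almost everywhere. Specializing to $g'_n=(e')^\perp\Theta(f_n)(e')^\perp$ and pulling $\inf_N$ inside the integral via monotone convergence (the integrand decreases with $N$), a double Markov argument then selects $\bar\theta_0\in\TT^2$ for which simultaneously $\tau(e'(\bar\theta_0))\lesssim\tau'(e')$ and the pointwise tail-$L_1[\ell_\infty]$-norm at $\bar\theta_0$ is bounded by a fixed constant times the corresponding quantity in $\M'$. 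Taking $e:=\Theta_{\bar\theta_0}^{-1}(e'(\bar\theta_0))\in\Prj(\L\FF_2)$ and using that $\Theta_{\bar\theta_0}$ is an $L_1[\ell_\infty]$-isometry transfers the estimate back to $\L\FF_2$; passing $\tau'(e')\to 0$ through Proposition~\ref{prp:AltNorm} yields the quasi-isometry, and order preservation on the positive cones follows from~(i).

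The principal obstacle is the hard direction of~(ii): a generic projection in $\M'$ does not arise as $\Theta(e)$ for any $e\in\Prj(\L\FF_2)$, so the projection-indexed infimum defining the $L_1^\loc(\ell_\infty/c_0)$-norm cannot be transferred verbatim. The fiber-and-selection step is designed precisely to circumvent this, exploiting the torus action $\{\Theta_{\bar\theta}\}$ to round $e'$ to a single fiber with good properties; the multiplicative loss incurred at the Markov step is exactly the reason one obtains only a \emph{quasi}-isometry rather than an isometry.
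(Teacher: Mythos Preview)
Your proposal is correct and follows essentially the same strategy as the paper: part~(i) via trace preservation of the $\ast$-homomorphism, and part~(ii) via the fiber decomposition $\bar\theta\mapsto\Theta_{\bar\theta}$ combined with a double Chebyshev/Markov selection of a good fiber $\bar\theta_0$, then pulling back through the automorphism $\Theta_{\bar\theta_0}^{-1}$. The paper's hard direction is organized slightly differently: rather than passing through your Fubini-type inequality for the fiberwise $L_1[\ell_\infty]$-norms and then pulling $\inf_N$ inside, it fixes $N$ and a near-optimal dominating element $g\in L_1(\M')$ from the start, evaluates $g_{\bar\theta}$ pointwise, and applies Chebyshev directly to $\tau(q_{\bar\theta})$ and $\|g_{\bar\theta}\|_1$; this avoids the monotone-convergence step and yields the explicit constant $2$. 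Both routes are equivalent in substance.
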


\begin{proof}
  The first point is a trivial consequence of the fact that $\Theta$ is trace-preserving.
  In order to prove \ref{itm:IsometricTheta.2} we have to show that the following two
  inequalities hold for every $f \in L_1(\M)_+$
  \begin{eqnarray}
    \big\| (\Theta(f_n))_n \big\|_{L_1^\loc[\ell_\infty/c_0]}
      & \leq & \big\| (f_n)_n \big\|_{L_1^\loc[\ell_\infty/c_0]} \label{eq:ProofIso.1}\\
    \big\| (f_n)_n \big\|_{L_1^\loc[\ell_\infty/c_0]}
      & \leq & 2 \big\| (\Theta(f_n))_n \big\|_{L_1^\loc[\ell_\infty/c_0]} \label{eq:ProofIso.2}.
  \end{eqnarray}
  For \eqref{eq:ProofIso.1}, start by fixing $\epsilon > 0$. We have that there is a $p \in \Prj(\L \FF_2)$, $N$ an integer and $g \in L_1(\L \FF_2)$ such that $p^\perp \, f_n \, p^\perp \leq g$ for every $N < n$ and $\| g \|_1 \leq (1 + \delta) \| (f_n)_n \|_{L_1^\loc[\ell_\infty/c_0]}$ for every $\delta$. This implies that for $\epsilon > 0$ there is a projection $q = \Theta(p)$ with $\tau(q) < \epsilon$ and $h \in L_1(\L \FF_2 \weaktensor L_\infty(\TT^2))$ given by $h = \Theta(g)$ satisfying that $q^\perp \, \Theta(f_n) \, q^\perp \leq h$ for every $N < n$, by point \ref{itm:IsometricTheta.2} we have that $\| h \|_1 \leq (1 + \delta) \| (f_n)_n \|_{L_1^\loc[\ell_\infty/c_0]}$. Since $\delta$ is arbitrarily small, we can conclude.
  For \eqref{eq:ProofIso.2} we are going to proof that
  \[
    \big\| (\Theta(f_n))_n \big\|_{L_1^\loc[\ell_\infty/c_0]} \leq 1 
    \quad \mbox{ implies that } \quad
    \big\| (f_n)_n \big\|_{L_1^\loc[\ell_\infty/c_0]} \leq 2.
  \]
  By the first assumption and the definition of the $L_1^\loc[\ell_\infty/c_0]$-norm we have that for every $\epsilon$ there is a projection $q \in \Prj(\L \FF_2 \weaktensor L_\infty(\TT^2))$, with $\varphi(q) < \epsilon$, an integer $N$ and $g \in L_1(\L \FF_2 \weaktensor L_\infty(\TT^2))$ with $\| g \|_1 \leq 1 + \delta$
  such that $q^\perp \, \Theta(f_n) \, q^\perp \leq g$ for $n N$. In order to reduce the identity to $\L \FF_2$ notice that for almost every $\bar\eta \in \TT^2$, we have that
  \[
    q_{\bar\eta}^\perp \, \Theta_{\bar\eta}(f) \, q^\perp_{\bar\eta} \leq g_{\bar\eta}
  \]
  holds, where $q_{\bar\eta} = (\mathrm{ev}_{\bar\eta} \otimes \Id)(q)$ and the same for $g$.
  Observe that we have the following inequalities as a consenquence of Chebichyev's inequality
  \begin{eqnarray*}
    \big| \big\{ \bar\theta \in \TT^2 : \tau(p_{\bar\theta}) & < & 2 \epsilon \big\} \big| \geq \frac12,\\
    \big| \big\{ \bar\theta \in \TT^2 : \| g_{\bar\theta} \|_1 & < & (2 + \delta) \| g \|_1 \big\} \big| > \frac12,
  \end{eqnarray*}
  Therefore, there is a $\bar\theta$ that lays in the intersection of both sets. That $\bar\theta$ satisfies that
  \begin{itemize}
    \item $\tau(q_{\bar\theta}) < 2 \epsilon$
    \item $q_{\bar\theta}^\perp \, \Theta_{\bar\theta}(f_n) \, q^\perp_{\bar\theta} \leq g_{\bar\theta}$ for every $N < n$.
    \item $\| g_{\bar\theta} \|_1 \leq (2 + \delta)$.
  \end{itemize}
  Applying $\Theta_{- \bar\theta}$ to $q_{\bar\theta}$, $g_{\bar\theta}$ and using the fact that the $\sup$ in the formula in Proposition \ref{prp:AltNorm} is monotone, and thus it can be exchanged by a limit $\epsilon \to 0$ gives that the $L_1^\loc[\ell_\infty/c_0]$-norm is smallest that $2 + \delta$. But, since $\delta$ can be taken arbitrarily small we can conclude.
\end{proof}

Now, we can proceed to prove the main theorem of this section.

\begin{proof}[{\bf Proof (of Theorem \ref{thm:FreegroupExt})}]
  Fix $t_n$. Observe that we have the following commutative diagram. 
  \begin{equation*}
    \xymatrix@C=5em@R=3em{
      L_1\big(\L \FF_2\big){|}_\Sigma \ar[d]^{(S_{t_n})_n} \ar[r]^{\subset}
        & L_1\big(\L \FF_2\big) \ar[r]^{\Theta} & L_1(\L \FF_2 \weaktensor L_\infty(\TT^2))
          \ar[d]^-{(\Id \otimes P_{t_n})_n} \\
      L_1^\loc \big( \L \FF_2; \ell_\infty/c_0 \big) \ar[r]^{=}
        & L_1^\loc \big(\L \FF_2; \ell_\infty/c_0 \big) \ar[r]^-{\Theta} & L_1^\loc \big(\L \FF_2 \weaktensor L_\infty(\TT^2); \ell_\infty/c_0\big)
    }
  \end{equation*}
  We have that $(\Id \otimes P_s)$ is a subordinated semigroup. Therefore, by Proposition \ref{prp:subbordinationBAU}, $(Id \otimes P_s)(f) \to f$ bilaterally almost uniformly as $s \to 0^+$. By Theorem \ref{prp:BAUImpLimsup} we have that 
  \[
    \big\| (\Id \otimes P_{s_n})
      :L_1(\L \FF_2 \weaktensor L_\infty(\TT^2)) \longrightarrow L_1^\loc(\L \FF_2 \weaktensor L_\infty(\TT^2); \ell_\infty/c_0) \big\| \leq 1.
  \]
  But now, by Proposition \ref{prp:ExtensionTheta}, we have that the horizontal arrows in the commutative diagram are quasi isometric. That implies \eqref{eq:FreegroupExt} holds. The result follows.
\end{proof}

\subsection*{The two-parametric problem in the free group \label{ssc:two-parameterF2}}
There is a natural two-parametric generalization of Problem \ref{prb:ConvergenceFreeIntro} for the free group $\FF_2$.
That is to determine what is the largest class $\D \subset L_1(\L \FF_2)$ for which the two-parametric symmetric Markovian semigroup given by $(s,t) \mapsto P_s \ast P_t$ satisfies that
\[
  \big( P_s \ast P_t \big)(f) \to f
  \quad \mbox{ bilaterally almost uniformly, as } s, t \to 0^+,
\]
for every $f \in \D$. A straightforward adaptation of Corollary \ref{cor:MultErgodic} gives the following

\begin{corollary}
  \label{cor:twoParameterFree}
  For every $f \in L \log^2 L (\L \FF_2)$ ans sequences $s_n, t_n \to 0$, we have that 
  \begin{equation*}
    \label{eq:twoParameterFree}
    \Big\| \, {\limsup_{n, m \to \infty}}^+ P_{s_n} \ast P_{t_m} (f) \Big\|_1
    \, \lesssim \,
    \| f \|_{L \log^2 L} 
  \end{equation*}
  As a consequence $P_s \ast P_t(f) \to f$ bilaterally almost uniformly as $s,t \to 0$,
  for every $f \in L \log^2 L(\L \FF_2)$.
\end{corollary}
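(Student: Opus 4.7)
The plan is to reduce Corollary \ref{cor:twoParameterFree} to Theorem \ref{thm:MainTool} after decomposing the free Poisson semigroup. Under the identification $\L \FF_2 \cong L_\infty(\TT) \ast L_\infty(\TT)$, the functoriality of the free product of Markovian maps (Theorem \ref{thm:FreeProductExtension}) gives $P_{s_n} \ast P_{t_m} = A_n \circ B_m$ with $A_n = P_{s_n} \ast \Id$ and $B_m = \Id \ast P_{t_m}$, both of which are symmetric Markovian (in particular positivity-preserving) semigroups on $\L \FF_2$.

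To verify the b.a.u.\ hypothesis \ref{itm:MainTool.2} of Theorem \ref{thm:MainTool}, I will use that $P_s$ is the subordinate Poisson semigroup of the heat semigroup $H_v$ on $\TT$ via formula \eqref{eq:Subordination}. Free-producting both sides with $\Id$ and using the weak-$*$ continuity of the free product (applied first on the dense $\ast$-algebra $D$ of alternating tensors, where the free product acts diagonally on each factor), one obtains
\[
  (P_s \ast \Id)(f) \, = \, \frac{1}{2\sqrt{\pi}} \int_0^\infty s \, e^{-s^2/(4v)} \, v^{-3/2} \, (H_v \ast \Id)(f) \, d v,
\]
exhibiting $P_s \ast \Id$ as the subordinate Poisson semigroup of the symmetric Markovian semigroup $H_v \ast \Id$. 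Proposition \ref{prp:subbordinationBAU} then yields b.a.u.\ convergence $A_n(f) \to f$ for every $f \in L_1(\L \FF_2)$.

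To verify the maximal hypothesis \ref{itm:MainTool.1}, I will use that $\Id \ast P_t$ is a symmetric Markovian semigroup on $\L\FF_2$. The continuous-parameter analogue of Theorem \ref{thm:LpBoundErgodic}\ref{itm:LpBoundErgodic.4} yields the $L_p \to L_p[\ell_\infty]$ bound of order $(p-1)^{-2}$, and Proposition \ref{prp:multilogarithms} (with $\alpha = 2$, $\beta = 0$) upgrades this to the desired $L \log^2 L \to L_1[\ell_\infty]$ estimate for $(\Id \ast P_{t_m})_m$. Theorem \ref{thm:MainTool} then produces the $\limsup^+$ bound \eqref{eq:twoParameterFree} with $F = \Id$, and the b.a.u.\ convergence follows from the multi-index version of Proposition \ref{prp:LimsupImpBAU}, applied with the norm-dense subalgebra $\CC[\FF_2]$ on which $(P_s \ast P_t)(g) \to g$ in norm as $s, t \to 0^+$.

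The only mildly subtle point is the verification that $P_s \ast \Id$ inherits subordination from $P_s$; this is elementary on alternating tensors in $D$ and extends by weak-$*$ density. Everything else is a direct assembly of the tools developed in the preliminaries, in exact parallel with the proof of Corollary \ref{cor:MultErgodic}\ref{itm:MultErgodic.2}--\ref{itm:MultErgodic.3}.
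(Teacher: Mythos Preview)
Your overall strategy --- factoring $P_{s_n} \ast P_{t_m} = (P_{s_n} \ast \Id) \circ (\Id \ast P_{t_m})$ and feeding this into Theorem \ref{thm:MainTool} --- matches what the paper has in mind, and your verification of hypothesis \ref{itm:MainTool.1} via the symmetric-Markovian maximal inequality and extrapolation is correct. But your verification of hypothesis \ref{itm:MainTool.2} contains a genuine error. The assignment $T \mapsto T \ast \Id$ is \emph{not} linear in $T$, so one cannot push the subordination integral through the free product. Concretely, on a group element $\omega = a^{n_1} b^{m_1} a^{n_2} \cdots$ one has $(H_v \ast \Id)\lambda_\omega = e^{-v \sum_i n_i^2} \lambda_\omega$, and integrating against the Poisson subordinator $\phi_s$ produces the multiplier $e^{-s\sqrt{\sum_i n_i^2}}$, whereas $(P_s \ast \Id)\lambda_\omega = e^{-s \sum_i |n_i|} \lambda_\omega$. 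These disagree as soon as two $a$-blocks occur (for $\omega = aba$ one gets $e^{-s\sqrt{2}}$ versus $e^{-2s}$). Hence $P_s \ast \Id$ is not the subordinated Poisson semigroup of $H_v \ast \Id$, and your appeal to Proposition \ref{prp:subbordinationBAU} via this identity fails. The point you flag as ``mildly subtle'' is in fact false.

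The paper itself only writes ``straightforward adaptation of Corollary \ref{cor:MultErgodic}'' and does not spell out this step, so you are filling in a detail that the paper leaves implicit. To rescue hypothesis \ref{itm:MainTool.2} you would need a different argument for b.a.u.\ convergence of $(P_{s_n} \ast \Id)$ on all of $L_1(\L\FF_2)$: for instance, showing that the function $\omega \mapsto \big(\sum_i |n_i|\big)^2$ is conditionally negative definite on $\FF_2$ (so that $P_s \ast \Id$ is the Poisson subordinate of the semigroup generated by \emph{that} length, to which Proposition \ref{prp:subbordinationBAU} then legitimately applies), or establishing a weak type $(1,1)$ maximal estimate for $(P_s \ast \Id)_s$ directly.
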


It is natural to conjecture that, following the same techniques employed to prove Theorem \ref{thm:FreegroupExt}, there would be two-parametric convergence for $P_s \ast P_t$ whenever $f$ lays in the class $\D$
\begin{equation}
  \label{eq:ClassDdef}
  \D = L \log^2 L(\L \FF_2) + L \log L(\L \FF_2) {|}_\Sigma.
\end{equation}
Nevertheless, this result is conditioned by the problem of determining whether the following holds
\begin{problem}
  \label{prb:CBMaximal}
  Let $(P_s)_{s \geq 0}$ be the usual Poisson semigroup on $\TT^d$. Does it hold that
  \[
    \big\| (P_s)_s: \, L_p(\TT^d) \to L_p(\TT^d; \ell_\infty) \big\|_\cb
    \, \lesssim \, 
    \max \Big\{ 1, \frac1{p - 1} \Big\}.
  \]
\end{problem}
Recall that, by \eqref{eq:CBSchattenNorm} and \eqref{eq:SchattenMaximal}, we have that the complete norm above is given by
\[
  \begin{split}
    \big\| (P_s)_s: & L_p(\TT^d) \to L_p(\TT^d; \ell_\infty) \big\|_\cb\\
    & = \sup_{m \geq 1} \Big\{ \big\| (\Id \otimes P_s)_s: S_p^m[L_p(\TT^d)]
                                      \to S_p^m[L_p(\TT^d; \ell_\infty)
                               \big\|
                        \Big\}\\
    & = \sup_{m \geq 1} \Big\{ \big\| (\Id \otimes P_s)_s: L_p(M_m \otimes L_\infty(\TT^d))
                                      \to L_p(M_m \otimes L_\infty(\TT^d); \ell_\infty)
                               \big\|
                        \Big\}.
  \end{split}
\]
It is also easily obtained that the complete norm in $L_p$ of the maximal operator associated with the Poisson semigroup on $\TT^d$ has a norm that is bounded by $(p -1)^{-2}$. But, as far as the knowledge of the authors go, it is not known whether this bound can be lowered to $(p-1)^{-1}$, which is the classical non-complete norm. If Problem \ref{prb:CBMaximal} were to have a positive solution, that would imply, by a routine application of Theorem \ref{thm:MainTool}, that the rightmost vertical arrow in the commutative diagram below is bounded
\begin{equation*}
  \xymatrix@C=4em@R=3em{
    L \log L \big(\L \FF_2\big){|}_\Sigma \ar[d]^{(P_{t_n} \ast P_{s_m})_{n,m}} \ar[r]^{\subset}
      & L \log L \big(\L \FF_2\big) \ar[r]^-{\Theta} & L \log L(\L \FF_2 \weaktensor L_\infty(\TT^2))
      \ar[d]^-{(\Id \otimes P_{t_n} \otimes P_{s_m})_{n, m}} \\
    L_1^\loc \big( \L \FF_2; \ell_\infty/c_\Delta \big) \ar[r]^{=}
      & L_1^\loc \big(\L \FF_2; \ell_\infty/c_\Delta \big) \ar[r]^-{\Theta}
      & L_1^\loc \big(\L \FF_2 \weaktensor L_\infty(\TT^2); \ell_\infty/c_\Delta\big)
  }
\end{equation*}
Using that the horizontal arrows are isometric would give that $(\Id \otimes P_s)$ induces a maximally bounded map from $L \log L(\L \FF_2){|}_\Sigma$ to $L_1^\loc(\L \FF_2; \ell_\infty/c_\Delta)$. Therefore, there would be bilateral almost uniform convergence for functions in the class $\D$ of \eqref{eq:ClassDdef} for the two-parametric semigroup $P_s \ast P_t$.

\section{\bf The Strong Maximal inequality \label{sct:StrongIneq}}

In this section we will work under the assumption that $(\M, \tau) = (\M^{[1]} \weaktensor \M^{[2]}, \tau_1 \otimes \tau_2)$, where $(\M^{[i]} , \tau_i)$ is a noncommutative probability space, $i \in \{1,2\}$. We also fix  filtrations $(\M_m^{[i]})$  of $\M_i$ with associated conditional expectations $\EE_m^{[i]}: \M^{[i]} \to \M^{[i]}_m$. We will denote $\EE_n^{[1]} \otimes \EE_m^{[2]}(f)$ by $f_{n,m}$ at times.
Our goal will be to prove Theorem \ref{thm:StrongIneq}, which gives an $\varepsilon$-perturbation of the weak type $(\Phi,\Phi)$ for the strong maximal in two variables, where $\Phi(t) = t \, (1 + \log t)^2$. We also suspect that a similar inequality holds for multiparametric ergodic means using Yeadon's inequality as a tool instead of Cuculescu's inequality.

\subsection*{\bf An improvement of Cuculescu's inequality} 
Let $(\N, \tau)$ be a finite von Neumann algebra. Given a positive $f \in L_1(\N)$, Cuculescu's inequality gives an explicit combinatorial description of the noncommutative maximal weak type $(1,1)$ inequality for $f \mapsto (\EE_n(f))_n$.

\begin{theorem}[{\bf Cuculescu's projections \cite{Cuculescu1971}}]
  \label{thm:CuculescuConstruction}
  Let $(\N_n)_n$ be a filtration over $\N$ and $(\EE_n)_n$ the associated conditional expectations.
  Given $f \in L_1(\N)_+$ and $\lambda > 0$, the family of decreasing projections
  $q_1(\lambda) \geq q_2(\lambda) \geq \ldots q(\lambda)$ given by the recursive formula
  \begin{eqnarray*}
    q_1(\lambda) & = & \1,\\ 
    q_n(\lambda)
      & = & q_{n-1}(\lambda) \, \1_{[0,\lambda]} \big( q_{n-1}(\lambda) \, f_n \, q_{n-1}(\lambda) \big)\\
      & = & \1_{[0,\lambda]} \big( q_{n-1}(\lambda) \, f_n \, q_{n-1}(\lambda) \big) \, q_{n-1}(\lambda),\\
    q(\lambda)
      & = & \bigwedge_{n \geq 1} q_n(\lambda),
  \end{eqnarray*}
  satisfies
  \begin{enumerate}[leftmargin=1.2cm, label={\rm (\roman*)}, ref={\rm (\roman*)}]
    \item $q_n(\lambda) \in \N_n$.
    \item $q_n(\lambda) \, f_n \, q_n(\lambda) \leq \lambda q_n(\lambda)$.
    \item $q_n(\lambda)$ commutes with $q_{n-1}(\lambda) \, f_n \, q_{n-1}(\lambda)$.
    \item Furthermore, 
    \begin{equation}  
      \label{eq:OriginalCuculescu}
      \tau \big( q(\lambda)^\perp \big) \leq \frac{1}{\lambda} \| f \|_1.
    \end{equation}
  \end{enumerate}
\end{theorem}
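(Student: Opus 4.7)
The plan is to verify the four claims in the order stated, treating (i)--(iii) as direct algebraic consequences of the recursive definition and reserving the quantitative work for (iv). For (i), I would argue by induction: since $q_{n-1}(\lambda) \in \N_{n-1} \subset \N_n$ by the inductive hypothesis and $f_n = \EE_n(f) \in \N_n$, the compression $q_{n-1}(\lambda)\, f_n\, q_{n-1}(\lambda)$ lies in $\N_n$, hence so does any of its spectral projections. For (iii), by Borel functional calculus $\1_{[0,\lambda]}(x)$ commutes with any self-adjoint $x$; applied to $x = q_{n-1}(\lambda) f_n q_{n-1}(\lambda)$ this says $e_n := \1_{[0,\lambda]}(q_{n-1} f_n q_{n-1})$ commutes with $q_{n-1} f_n q_{n-1}$, and in particular with $q_{n-1}(\lambda)$ itself (since multiplication by $q_{n-1}$ on either side of $q_{n-1} f_n q_{n-1}$ gives back the same element). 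This commutation is precisely what makes $q_n(\lambda) = q_{n-1}(\lambda)\, e_n = e_n\, q_{n-1}(\lambda)$ a genuine projection.

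For (ii), I would expand the left- and right-hand copies of $q_n(\lambda)$ in $q_n f_n q_n$ using the two equivalent factorizations and commute $e_n$ past $q_{n-1}(\lambda)$, obtaining
\[
  q_n(\lambda)\, f_n\, q_n(\lambda) \;=\; e_n\, \bigl(q_{n-1}(\lambda)\, f_n\, q_{n-1}(\lambda)\bigr)\, e_n \;\leq\; \lambda\, e_n
\]
by functional calculus. Since $q_n(\lambda) \leq e_n$, compressing both sides by $q_n(\lambda)$ and using $q_n(\lambda)\, e_n\, q_n(\lambda) = q_n(\lambda)$ converts this into $q_n(\lambda) f_n q_n(\lambda) \leq \lambda\, q_n(\lambda)$.

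The heart of the argument is (iv). I would introduce the increments $r_n := q_{n-1}(\lambda) - q_n(\lambda) = q_{n-1}(\lambda)\,(\1 - e_n)$. These are pairwise orthogonal projections: for $m > n$ one has $r_m \leq q_{m-1} \leq q_n$, while $r_n q_n = 0$, so $r_n r_m = r_n q_n r_m = 0$. By normality of $\tau$ they satisfy $\sum_{n} \tau(r_n) = \tau(\1 - q(\lambda))$. On the spectral subspace $\1 - e_n = \1_{(\lambda,\infty)}(q_{n-1} f_n q_{n-1})$ we have $(\1-e_n)(q_{n-1} f_n q_{n-1})(\1-e_n) \geq \lambda (\1-e_n)$; inserting the commuting $q_{n-1}$ factors and compressing by $r_n$ yields $r_n f_n r_n \geq \lambda\, r_n$. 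Taking the trace and exploiting $r_n \in \N_n$ together with the bimodule property of $\EE_n$ and $\tau \circ \EE_n = \tau$,
\[
  \lambda\, \tau(r_n) \;\leq\; \tau(r_n f_n) \;=\; \tau(r_n \EE_n(f)) \;=\; \tau(\EE_n(r_n f)) \;=\; \tau(r_n f).
\]
Summing over $n$, orthogonality of the $r_n$ gives
\[
  \lambda\, \tau\bigl(\1 - q(\lambda)\bigr) \;\leq\; \tau\bigl((\1 - q(\lambda))\, f\bigr) \;\leq\; \|f\|_1.
\]

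The main obstacle is not technical but structural: one must arrange the recursion so that the pieces cut off at each stage are simultaneously pairwise orthogonal, supported precisely where $q_{n-1} f_n q_{n-1}$ exceeds $\lambda$, and compatible with the conditional expectations $\EE_n$ (in order to replace $f_n$ by $f$ under the trace). Once this combinatorial structure is in place, the estimate reduces to a noncommutative analogue of the classical Doob stopping-time argument.
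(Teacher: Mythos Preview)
The paper does not prove this statement; it is quoted from \cite{Cuculescu1971} as a known result and used as a black box (the paper's own contribution is the refinement in Lemma~\ref{lem:NewCuculescu}). Your argument is the standard proof and is correct. One small remark on exposition: in your paragraph on (iii), the phrase ``and in particular with $q_{n-1}(\lambda)$ itself'' is not a logical consequence of $e_n$ commuting with $q_{n-1} f_n q_{n-1}$; the real reason, which you do give parenthetically, is that $q_{n-1}$ already commutes with $x = q_{n-1} f_n q_{n-1}$ (since $q_{n-1} x = x q_{n-1} = x$) and hence with any spectral projection of $x$. Also worth noting, though you do not need it, is that in fact $\1 - e_n \le q_{n-1}$ (because $x$ vanishes on $q_{n-1}^\perp H$ and $\lambda > 0$), so actually $r_n = \1 - e_n$, which slightly streamlines the lower bound $r_n f_n r_n \ge \lambda r_n$.
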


The following lemma gives a refinement of inequality \eqref{eq:OriginalCuculescu} in which the right hand side is further restricted to the spectral region over which $f$ is large with respect to $\lambda$.

\begin{lemma}
  \label{lem:NewCuculescu}
  Let $f \in L_1(\N)_+$, $\lambda \geq 0$ and $q(\lambda)$ be like in the Theorem \ref{thm:CuculescuConstruction}.
  We also have that
  \[
    \tau \big( q(\lambda)^\perp \big)
    \, \leq \,
    \frac{2}{\lambda} \tau \big( f \, \1_{(\frac{\lambda}{2},\infty)}(f) \big).
  \]
\end{lemma}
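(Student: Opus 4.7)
The plan is to first upgrade the classical Cuculescu bound \eqref{eq:OriginalCuculescu} to the sharpened form
\begin{equation*}
  \lambda \, \tau\big(q(\lambda)^\perp\big) \, \leq \, \tau\big(q(\lambda)^\perp \, f\big),
\end{equation*}
which is essentially already present inside the proof of Theorem \ref{thm:CuculescuConstruction}, and then to decompose $f$ according to its own spectral projections at the threshold $\lambda/2$.

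For the sharpened inequality, I would set $A_n := q_{n-1}(\lambda) \, f_n \, q_{n-1}(\lambda)$ and use that by its very construction $q_n(\lambda)$ commutes with $A_n$. This yields the decomposition
\begin{equation*}
  A_n \, = \, q_n A_n q_n + (q_{n-1} - q_n) A_n (q_{n-1} - q_n),
\end{equation*}
and Borel functional calculus applied to $A_n$ gives $(q_{n-1}-q_n) A_n (q_{n-1}-q_n) \geq \lambda(q_{n-1}-q_n)$. Taking trace one finds $\tau(q_{n-1} f_n q_{n-1}) - \tau(q_n f_n q_n) \geq \lambda \, \tau(q_{n-1} - q_n)$. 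Since $q_m \in \N_m$ implies $\tau(q_m f_m q_m) = \tau(q_m \, \EE_m(f)) = \tau(q_m f)$, summing from $n = 1$ to $N$ telescopes the left-hand side into $\tau((\1 - q_N) f)$. Passing to the limit $N \to \infty$, which is justified by the normality of $\tau$ combined with the fact that $q_N(\lambda) \searrow q(\lambda)$ monotonically, yields the sharpened bound.

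With that inequality available, I would write $f = f \, \1_{[0,\lambda/2]}(f) + f \, \1_{(\lambda/2,\infty)}(f)$ via Borel functional calculus on $f$, and apply the basic identity $\tau(q^\perp X) = \tau(q^\perp X q^\perp)$. On the first piece, since $f \, \1_{[0,\lambda/2]}(f) \leq \tfrac{\lambda}{2}\1$, one obtains
\begin{equation*}
  \tau\big(q^\perp f \, \1_{[0,\lambda/2]}(f)\big) \, = \, \tau\big(q^\perp f \, \1_{[0,\lambda/2]}(f) q^\perp\big) \, \leq \, \tfrac{\lambda}{2} \, \tau(q^\perp).
\end{equation*}
On the second, setting $Y := f \, \1_{(\lambda/2,\infty)}(f) \geq 0$ and using cyclicity gives $\tau(q^\perp Y) = \tau(Y^{1/2} q^\perp Y^{1/2}) \leq \tau(Y)$, since $q^\perp \leq \1$. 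Inserting these into $\lambda \tau(q^\perp) \leq \tau(q^\perp f)$ and absorbing the $\tfrac{\lambda}{2}\tau(q^\perp)$ term on the left then produces exactly the desired bound.

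No substantial obstacle arises: the sharpened Cuculescu inequality is just a careful reading of the usual argument, and the spectral splitting of $f$ is elementary. The only mild care needed is in the passage to the limit $N \to \infty$, which however follows readily from normality of $\tau$ and the monotonicity $q_N(\lambda) \searrow q(\lambda)$.
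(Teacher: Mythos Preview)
Your proof is correct and follows essentially the same idea as the paper's: both use that $p_n := q_{n-1}-q_n$ satisfies $p_n f_n p_n \geq \lambda p_n$, together with the spectral splitting $f = f\1_{[0,\lambda/2]}(f) + f\1_{(\lambda/2,\infty)}(f)$. The only difference is the order of operations. The paper performs the spectral splitting \emph{inside} each $p_n$, obtaining $p_n \EE_n(f\1_{(\lambda/2,\infty)}(f))p_n \geq \tfrac{\lambda}{2}p_n$ directly and then summing; you instead first telescope to the aggregate inequality $\lambda\,\tau(q^\perp) \leq \tau(q^\perp f)$ and only then split $f$ spectrally. Your intermediate inequality is a pleasant statement in its own right, but the arguments are otherwise interchangeable. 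One trivial indexing point: since $q_1(\lambda) = \1$ in the paper's convention, your telescoping sum should run from $n=2$ rather than $n=1$.
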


\begin{proof}
  Let $p_n(\lambda) = q_{n-1}(\lambda) - q_n(\lambda)$ for $n > 1$ and observe that 
  \begin{equation}
    \label{eq:PrelinNewCuculescu}
    p_n(\lambda) \, \EE_n \big( f \1_{(\frac{\lambda}{2},\infty)}(f) \big) \, p_n(\lambda)
    \, \geq \,
    \frac{\lambda}{2} p_n(\lambda).
  \end{equation}
  Indeed, $p_n(\lambda) f_n p_n(\lambda) = p_n(\lambda) q_{n-1}(\lambda) f_n q_{n-1}(\lambda) p_n(\lambda) \geq \lambda p_n(\lambda)$ and 
  \[
    \begin{split}
      p_n(\lambda) \, & \EE_n \big( f \1_{(\frac{\lambda}{2},\infty)}(f) \big) \, p_n(\lambda) \\
      & =  p_n(\lambda) \, f_n \, p_n(\lambda) - p_n(\lambda) \, \EE_n \big( f \, \1_{[0,\frac{\lambda}{2}]}(f) \big) \, p_n(\lambda) \, \geq \, \frac{\lambda}{2} \, p_n(\lambda).
     \end{split}
  \]
  Next, we use inequality \eqref{eq:PrelinNewCuculescu} to obtain
  \begin{eqnarray*}
    \tau \big( \1 - q(\lambda) \big) & = &  \sum_{n > 1} \tau \big( p_n(\lambda) \big) \\
      & \leq & \frac{2}{\lambda} \, \sum_{n > 1} \tau \Big( p_n(\lambda) \, \EE_n \big( f \, \1_{(\frac{\lambda}{2}, \infty)}(f) \big) \, p_n(\lambda) \Big) \, \leq \, \frac{2}{\lambda} \tau \big( f \1_{(\frac{\lambda}{2},\infty)}(f) \big)
  \end{eqnarray*}
  which proves the claimed assertion.
\end{proof}

\begin{remark}
  \label{rmk:SteinLlogL}
  In the commutative case, the formula in Lemma \ref{lem:NewCuculescu} is very well known. For instance, a two sided version of the inequality was used in \cite{Stein1969LlogL} to prove that the $L_1$-norm of the Hardy-Littlewood maximal function associated to $f$ is comparable to the $L \log L$-norm of $f$. This can be obtained immediately integrating the formula in Lemma \ref{lem:NewCuculescu} with respect to $\lambda$ and applying Fubini's theorem.
  
  That computation gives an interesting consequence in the noncommutative case. Recall that it was shown in \cite{Hu2009} that the noncommutative maximal function with respect to a martingale sequence $(\EE_n)_n$ is a bounded map $L \log^2 L(\N) \to L_1(\N;\ell_\infty)$, and that the exponent $2$ on the log is optimal. By copying the computation of \cite{Stein1969LlogL} we have that 
  \begin{equation}
    \label{eq:MaximalLevelSets}
    \inf \left\{ \int_0^\infty \tau \big( q(\lambda)^\perp \big) \, d \lambda
                 \, : \, 
                   \begin{split} 
                     & q(\lambda) \in \Prj(\N) \mbox{ with }\\
                     & q(\lambda) \, f_n \, q(\lambda) \leq \lambda \, q(\lambda), \forall n \geq 1
                   \end{split}
                 \right\}
    \lesssim \| f \|_{L \log L(\N)}
  \end{equation}
   This leads to the following interesting observation: given a sequence $(f_n)_n \in L_1(\N; \ell_\infty)$ the quantity in the left hand side of \eqref{eq:MaximalLevelSets}
  is not comparable to the $L_1[\ell_\infty]$-norm. Indeed, if it were so, one would get that the $L_1[\ell_\infty]$-norm of $(\EE_n(f))_m$ would be bounded by the $L \log L$-norm of $f$. But that is not possible by \cite[Section 3]{Hu2009}.
  
  This gives an extra source of intuition regarding why it is difficult to obtain maximal $L_p$ inequalities from weak type ones (like the ones given by Cuculescus' and Yeadon's inequalities). It is not possible to reconstruct the $L_p[\ell_\infty]$-norm from the projections obtained in the weak type inequalities. 
\end{remark}

\subsection*{\bf Proof of the strong maximal theorem}

Now let us prove assertions \ref{itm:StrongIneq.1} and \ref{itm:StrongIneq.2} of Theorem \ref{thm:StrongIneq}. If $\lambda \leq (2e^2)^{1/\varepsilon}$, both inequalities trivially hold with  $q(\lambda) = 0$. When $\lambda > (2e^2)^{1/\varepsilon}$ we may assume that $\lambda = e^r$ for some positive integer $r \in \ZZ_+$. Let us first construct the projection $q(\lambda)$. Let $(f_n)_{n \geq 1}$ denote the positive martingale $(\EE_{n}^{[1]} \otimes \Id)(f)$ and consider Cuculescu's projections  $q_n^{[1]}(e^\ell)$ associated to $(f_n)_n$ at height $e^\ell$
\[
  q^{[1]}(e^\ell) = \bigwedge_{n \ge 1} q_{n}^{[1]}(e^\ell).
\]
Define the family of disjoint projections
\[
  \pi_{j}^{[1]} = \bigwedge_{\ell \geq j} q^{[1]}(e^\ell) - \bigwedge_{\ell \geq j-1} q^{[1]}(e^\ell)
  \quad \mbox{ for } \quad j \in \ZZ.
\]
Let $\rho_\lambda = \sum_{j \geq r} \pi_{j}^{[1]}$, where $r$ comes from the identity $\lambda = e^r$, and consider the (unbounded) operators
\begin{equation}
  \label{eq:ApproxMaximal}
  \Pi = \sum_{j \ge 1} e^j j^{1 + \varepsilon} \pi_{j}^{[1]}
  \quad \mbox{ and } \quad
  \Pi_\lambda = \rho_\lambda \Pi = \Pi \rho_\lambda.
\end{equation}
Next, construct the sequence of Cuculescu's projections $q_m^{[2]}(e^\ell)$ associated to the positive martingale
$(\Id \otimes \EE_{m}^{[2]})(\Pi_\lambda)$ at height $e^\ell$. As we did above, define the corresponding family of disjoint projections $\pi_{j}^{[2]}$ accordingly. We finally define 
\[
  q(\lambda) := \1 - \sum_{j \geq r} \pi_{j}^{[2]}.
\] 

\begin{proof}[Proof of inequality \ref{itm:StrongIneq.1}.]
We start noticing that
\begin{eqnarray*}
  q_\lambda \, f_{n,m} \, q_\lambda
    & = & q_\lambda \EE_{m}^{[2]} \big( \rho_\lambda \, f_n \, \rho_\lambda + \rho_\lambda \, f_n \, \rho_\lambda^\perp + \rho_\lambda^\perp f_n \rho_\lambda + \rho_\lambda^\perp \, f_n \, \rho_\lambda^\perp \big) q_\lambda \\
    & \leq & 2 q_\lambda \EE_{m}^{[2]} \big( \rho_\lambda \, f_n \, \rho_\lambda + \rho_\lambda^\perp f_n \rho_\lambda^\perp \big) \, q_\lambda  \, \leq \, 2 q_\lambda \EE_{m}^{[2]} \big( \rho_\lambda f_n \rho_\lambda \big) q_\lambda + 2 e^r q_\lambda. 
\end{eqnarray*} 
Indeed, to justify the last inequality recall that 
\begin{eqnarray}
  \rho_\lambda^\perp
    & = & \bigwedge_{\ell \in \ZZ} q^{[1]}(e^\ell) + \sum_{j < r} \pi_{j}^{[1]}, \label{eq:qInfty} \\
  \Big\| \bigwedge_{\ell \in \ZZ} q^{[1]}(e^\ell) f_n^{\frac12} \, \Big\|
    & = & \Big\| f_n^{\frac12} \bigwedge_{\ell \in \ZZ} q^{[1]}(e^\ell) \, \Big\| \, \leq \, \lim_{\ell \to -\infty} e^{\frac{\ell}{2}} q^{[1]}(e^\ell) = 0. \nonumber
\end{eqnarray}
In particular, we may estimate $\rho_\lambda^\perp \, f_n \, \rho_\lambda^\perp$ as follows 
\begin{eqnarray*}
  \big\| \rho_\lambda^\perp f_n \rho_\lambda^\perp \big\|_\M
    & = & \sup_{\|\alpha\|_{L_2(\M)} \le 1} \sum_{j,k < r} \big\langle \pi_{j}^{[1]} f_n \pi_{k}^{[1]} \alpha, \alpha \big\rangle \\
    & \leq & \sup_{\|\alpha\|_{L_2(\M)} \leq 1} \sum_{j,k < r} \big\| \pi_{j}^{[1]} f_n \pi_{k}^{[1]} \big\|_\M
             \big\| \pi_{j}^{[1]} \alpha \big\|_2 \big\| \pi_{k}^{[1]} \alpha \big\|_2 \\
    & \leq & \sup_{\|\alpha\|_{L_2(\M)} \leq 1} \sum_{j,k < r} e^{(j+k)/2} \big\| \pi_{j}^{[1]} \alpha \big\|_2 \big\| \pi_{k}^{[1]} \alpha \big\|_2 \, < \, e^r
\end{eqnarray*}
by H\"older's inequality. Next, to estimate $\rho_\lambda f_n \rho_\lambda = \Pi_\lambda^{\frac12} ( \Pi_\lambda^{-\frac12} f_n \Pi_\lambda^{-\frac12} ) \Pi_\lambda^{\frac12}$ we note that
\begin{eqnarray*}
  \big\| \Pi_\lambda^{-\frac12} f_n \Pi_\lambda^{-\frac12} \big\|_\M
    & = & \sup_{\|\alpha\|_{L_2(\M)} \leq 1} \sum_{j, k \geq r} \frac{e^{-(j+k)/2}}{(jk)^{(1+\varepsilon)/2}} \big\langle \pi_{j}^{[1]} f_n \pi_{k}^{[1]} \alpha, \alpha \big\rangle \\
    & \leq & \sup_{\|\alpha\|_{L_2(\M)} \leq 1} \Big( \sum_{j \geq r} \frac{\|\pi_{j}^{[1]} \alpha\|_2}{j^{\frac{1+\varepsilon}{2}}} \Big)^2 \ \leq \ \sum_{j \ge 1} \frac{1}{j^{1+\varepsilon}} \, =: \, A_{\varepsilon}.
\end{eqnarray*}
This implies that $\rho_\lambda \, f_n \, \rho_\lambda \leq A_\varepsilon \, \Pi_\lambda$ and therefore
\[
  q(\lambda) \, \EE_{m}^{[2]} \big( \rho_\lambda \, f_n \, \rho_\lambda \big) \, q(\lambda) \leq A_\varepsilon \, q(\lambda) \, \EE_{m}^{[2]} (\Pi_\lambda) \, q(\lambda) \leq A_\varepsilon \lambda \, q(\lambda).
\]
The last inequality follows from the definition of $q(\lambda)$ arguing as in \eqref{eq:qInfty} and the estimate after it. Altogether we have proved that $q(\lambda) \, f_{n,m} \, q(\lambda) \leq C_\varepsilon \lambda q(\lambda)$ for some constant $C_\varepsilon$ independent of $\lambda$ which diverges as $\varepsilon \to 0^+$. 
\end{proof}

\begin{proof}[Proof of inequality \ref{itm:StrongIneq.2}]
We have
\begin{eqnarray}
  \tau(\1 - q_\lambda)
    & = & \sum_{j > r-1} \tau(\pi_{j}^{[2]}) \, = \, \tau \Big( \1 - \bigwedge_{\ell \ge r-1} q_2(2^\ell) \Big) \nonumber\\
    & \leq & \sum_{\ell \ge r-1} \tau \big( \1 - q^{[2]}(e^\ell) \big) \, \leq \, \sum_{\ell \geq r-1} e^{-\ell}
    \tau (\Pi_\lambda) \quad \lesssim \, \frac1{\lambda} \quad \tau \big(\Pi_\lambda \big). \label{eq:asterisco}
\end{eqnarray}
We now expand the trace of $\Pi_\lambda/\lambda$ as follows
\[
  \tau(\Pi_\lambda/\lambda)
  =
  \int_{\RR_+} \tau \big( \1_{(\lambda s,\infty)}(\Pi_\lambda) \big) \, ds
  \leq \tau (\rho_\lambda) + \int_1^\infty \tau \big( \1_{(\lambda s,\infty)}(\Pi_\lambda) \big) \, ds.
\]
Arguing as above we see that $\tau(\rho_\lambda) \lesssim \lambda^{-1} \tau(f)$, which gives the first term in the sum defined by the right hand side of inequality \ref{itm:StrongIneq.2} in Theorem \ref{thm:StrongIneq}. It remains to prove the inequality  
\begin{equation}
  \label{Eq-}
  \int_1^\infty \tau \big( \1_{(\lambda s,\infty)}(\Pi_\lambda) \big) \, ds
  \, \leq \,
  C_{\varepsilon} \, \tau \left\{ \frac{f}{\lambda} \Big( 1 + \log_+ \Big[ \frac{f}{\lambda} \Big] \Big) \, \Big( \log_+ \Big[ \frac{f}{\lambda^{1-\epsilon}} \Big] \Big)^{1+\varepsilon}  \right\}.
\end{equation}
Since $\Pi_\lambda = \sum_{j \ge r} e^j j^{1 + \varepsilon} \pi_{j}^{[1]}$ and $\lambda = e^r$, we get
\[
  \1_{(\lambda s,\infty)}(\Pi_\lambda)
  \ = \!\!\!
  \sum_{\substack{ j \ge r \\ e^j j^{1 + \varepsilon} > \lambda s }} \pi_{j}^{[1]}
  \ = \ \begin{dcases}
          \quad \sum_{j \ge r} \, \pi_{j}^{[1]} & \mbox{ if } s < r^{1+\varepsilon},\\
          \sum_{e^j j^{1 + \varepsilon} > \lambda s} \pi_{j}^{[1]} & \mbox{ if } s > r^{1+\varepsilon}.
        \end{dcases}
\]
Let $j_0(s)$ be the biggest integer $j$ satisfying $e^j j^{1 + \varepsilon} \le \lambda s$. Arguing like in \eqref{eq:asterisco} and using Lemma \ref{lem:NewCuculescu} gives 
\begin{eqnarray*}
  \int_1^\infty \tau \big( \1_{(\lambda s,\infty)}(\Pi_\lambda) \big) \, ds
    & \leq & r^{1+\varepsilon} \tau(\rho_\lambda) + \int_{r^{1+\varepsilon}}^\infty \sum_{j > j_0(s)} \tau (\pi_{j}^{[1]}) \, ds \\
    & \leq & \big( \log \lambda \big)^{1+\varepsilon} \sum_{\ell \ge r} e^{-\ell} \tau \big( f \1_{(e^{\ell}/2,\infty)}(f) \big) \\
    & + & \int_{r^{1+\varepsilon}}^{e^r} \hskip-2pt \sum_{\ell \ge j_0(s)} e^{-\ell} \tau \big( f \1_{(e^{\ell}/2,\infty)}(f) \big) \, ds \\
    & + & \int_{e^r}^\infty \hskip2pt \sum_{\ell \ge j_0(s)} e^{-\ell} \tau \big( f \1_{(e^{\ell}/2,\infty)}(f) \big) \, ds \ =: \ \mathrm{A}_1 + \mathrm{A}_2 + \mathrm{A}_3.
\end{eqnarray*}
Notice that, for every $\varepsilon > 0$, we have that
\[
  \1_{( \frac{\lambda}{2}, \infty)}(f) = \1_{(\lambda, \infty)} \left( \Big( \frac{2 f}{\lambda^{1+\varepsilon}}\Big)^{\frac1{\varepsilon}} \right)
\]
and therefore, we trivially obtain that
\begin{eqnarray*}
  \mathrm{A}_1
    & \lesssim & \big( 1 + \log_+ \lambda \big)^{1+\varepsilon} \tau \Big( \frac{f}{\lambda} \1_{(\frac{\lambda}{2},\infty)}(f) \Big) \\
    & \lesssim & \tau \Big\{ \frac{f}{\lambda} \big( 1 + \log_+ \lambda \big)^{1+\varepsilon} \1_{(\frac{\lambda}{2},\infty)}(f) \Big\} \\
    & \lesssim & \Big(1 + \frac1{\varepsilon} \Big) \tau \Big\{ \frac{f}{\lambda} \Big( 1 + \log_+ \frac{f}{\lambda^{1-\varepsilon}} \Big)^{1+\varepsilon} \Big\}.
\end{eqnarray*}
Next, we use $\lambda > e^2$ to get $j_0(s) \ge 2$ so that 
\[
e^{-j_0(s)} \quad  =  \quad \frac{e  ( j_0(s)+1)^{1+\varepsilon}}{e^{j_0(s)+1} (j_0(s)+1)^{1+\varepsilon}} \lesssim \frac{ (\log (\lambda s))^{1+\varepsilon}}{\lambda s}.
%\\ \!\! & \lesssim & \!\! \frac{{\color{red} \big| \log [ e^{-(j_0(s)+1)} |j_0(s)+1|^{-(1+\varepsilon)}] \big|^{1+\varepsilon}}}{e^{j_0(s)+1} (j_0(s)+1)^{1+\varepsilon}} \quad \lesssim \quad \Big(\frac{3}{2}\Big)^{1 + \varepsilon} \, e \, \frac{{\color{red} |\log (\lambda s)|^{1+\varepsilon}}}{\lambda s}.   
\]
for $s > r^{1+\varepsilon}$. Since $\log (\lambda s) \le 2 \max \{\log \lambda, \log s\}$, we get 
\[
  \sum_{\ell \ge j_0(s)} e^{-\ell} \tau \big( f \1_{(e^{\ell}/2,\infty)}(f) \big) 
  \, \lesssim \,
  \begin{dcases}
    \frac{\log^{1+\varepsilon} (\lambda)}{\lambda s} \tau \Big( f \1_{(\frac{\lambda s}{\log^{1+\varepsilon} (\lambda)},\infty)}(10 f) \Big) & \mbox{if } \lambda \geq s, \\
    \frac{\log^{1+\varepsilon} (s)}{\lambda s} \tau \Big( f \1_{(\frac{\lambda s}{\log^{1+\varepsilon} (s)},\infty)}(10 f) \Big) & \mbox{if }  s > \lambda.
  \end{dcases}.
\]
This gives the following estimate for $\mathrm{A}_2$, arguing as for $A_1$:
\begin{eqnarray*}
  \mathrm{A}_2
     & \leq & \frac{ \log^{1+\varepsilon} (\lambda)}{\lambda}  \int_{ \log^{1+\varepsilon} (\lambda)}^\lambda \tau \Big\{ f \1_{(\frac{\lambda s}{\log^{1+\varepsilon} (\lambda)},\infty)}(10 f) \Big\} \, \frac{ds}{s} \\
 \!\! & \le & \!\! \frac{ \log^{1+\varepsilon} (\lambda)}{\lambda} \, \tau \Big\{ f \log \Big[ \frac{4 e f}{\lambda}  \log^{1+\varepsilon} (\lambda) \Big] \1_{(\frac{\lambda s}{\log^{1+\varepsilon} (\lambda)},\infty)}(10 f) \Big\} \\ 
 & \lesssim_{\varepsilon} & \tau \bigg\{ \frac{f}{\lambda}  \Big( 1 + \log_+ \Big( \frac{f}{\lambda} \Big)\Big) \Big( 1 + \log_+ \Big( \frac{f}{\lambda^{1-\varepsilon}} \Big) \Big)^{1+\varepsilon} \bigg\}\\  
\end{eqnarray*}
Similarly, we get
\[
  \mathrm{A}_3 \le \frac{1}{\lambda} \int_\lambda^\infty \tau \Big( f \1_{(\frac{\lambda s}{\log^{1+\varepsilon} (s)},\infty)}(10 f) \Big) \, \frac{\log^{1+\varepsilon}(s)}{s} ds.
\]
Next, given $s > \lambda > e^{16}$ and $\varepsilon < 1$, observe that
\[
\frac{s}{\log^{1+\varepsilon}(s)} < A \ \Rightarrow \ s < A \log^{1+\varepsilon}(s) < 4 A \log^{1+\varepsilon} (A).
\]
Therefore, approximating the operator $10f/\lambda$ by a positive combination of pairwise disjoint projections $\sum_j a_j P_j$ we may rewrite the above integral as follows and apply Fubini to get the  following estimate 
\begin{eqnarray*}
  \mathrm{A}_3
    & \le & \tau \Big( \frac{f}{\lambda} \int_\lambda^\infty \sum_{a_j > \frac{s}{\log^{1+\varepsilon}(s)}} P_j \, \frac{\log^{1+\varepsilon}(s)}{s} ds \Big) \\ & \le & \tau \Big( \frac{f}{\lambda} \sum_{a_j > \frac{\lambda}{\log^{1+\varepsilon}(\lambda)}} \Big[ \int_\lambda^{4 a_j \log^{1+\varepsilon}(a_j)} \frac{\log^{1+\varepsilon}(s)}{s} ds \Big] P_j \Big) \\
    & \le & \tau \Big( \frac{f}{\lambda} \sum_{a_j > \frac{\lambda}{\log^{1+\varepsilon}(\lambda)}} \log^{2+\varepsilon} \big( 4 a_j \log^{1+\varepsilon}(a_j) \big) P_j \Big) \\ [5pt]
    & \le & \tau \Big( \frac{f}{\lambda} \1_{(\frac{\lambda}{\log^{1+\varepsilon}(\lambda)},\infty)}\Big(\frac{10 f}{\lambda} \Big) \log^{2+\varepsilon} \Big[ \frac{40 f}{\lambda} \log^{1+\varepsilon} \Big( \frac{10 f}{\lambda} \Big) \Big] \Big) \\ [10pt]
    & \lesssim & \tau \bigg( \frac{f}{\lambda} \Big( \log_+\Big[ \frac{f}{\lambda} \Big] \Big)^{2+\varepsilon} \bigg) 
\end{eqnarray*}
\end{proof}

Several remarks are in order.

\begin{remark}
  \label{rmk:LongRemark}
  \
  \begin{enumerate}[leftmargin=1.2cm, label={\bf R.\arabic*}, ref={\bf R.\arabic*}]
    \item \label{itm:RemarkGuzman}
    The proof above can be understood as a noncommutative generalization of a result of de Guzm\'an. Indeed, in \cite{Guzman1972ProductBases}, it was proven that if two bases of Borel sets $\mathfrak{B}_i \subset \B(\RR^{d_i})$ with finite measure satisfy that their associated maximal operators over $\RR^{d_i}$
    \[
      (M_{i}f)(x) = \sup_{E \in \mathfrak{B}_i} \dashint_{E} |f(x)| \, d x
    \]
    are of weak type $(\Phi_i, \Phi_i)$, i.e
    \[
      \big| \big\{
              x \in \RR^{d_i} \, : \,
              (M_i f)(x) > \lambda
            \big\} \big|
      \, \lesssim \,
      \int_{\RR^{d_i}} \Phi_i \left( \frac{|f(x)|}{\lambda}\right) \, d x
    \]
    Then, the tensor product of both maximal operators $M_1 \otimes M_2$
    is of weak type $(\Psi, \Psi)$ over $\RR^{d_1 + d_2}$, were $\Psi$ is given by a combination
    of $\Phi_1$ and $\Phi_2$. 
  
    When specialized to the commutative case, our proof above gives what is perhaps the simplest version de Guzm\'an's and Cordoba/Fefferman inequality \ref{eq:classicalCF} in the martingale case.  
    Our proof also highlights the different behavior of the Cuculescu projections compared with the level sets of the classical maximal. Indeed, in the
    classical case we do have that
    \[
      \Pi \,  = \, \sum_{j \geq 1} e^j \pi_{j}^{[1]} \, = \, \sum_{j \geq 1} e^j \, \1_{\{e^{j} < M_1(f) \leq e^{j + 1}\}}
    \]
    is comparable to the maximal function $(M_1 \otimes \Id)(f)$. While,
    see Remark \ref{rmk:SteinLlogL}, the same cannot hold in the noncommutative
    case. Nevertheless, the modified $\Pi$ of \eqref{eq:ApproxMaximal} with the
    extra term $j^{1 + \varepsilon}$ can be used to control the behavior of the maximal.

  \item \label{itm:OptimalExp}
  The proof above can be modified by introducing other terms of
  summable inverse in the definition of $\Pi$, thus giving inequalities like \ref{itm:StrongIneq.2}
  for more complicated Orlicz spaces like
  \[
    L \log^2 L \log \log^{1 + \varepsilon} L(\M).
  \]
  We have chosen not to perform our calculation for that particular space because our inequality recovers   what we believe is the optimal case, see Conjecture \ref{cjn:StrongIneqC}, when $\varepsilon \to 0^+$, while the space above would not.
  
  \item \label{itm:Guixiang}
  A similar result to Theorem \ref{thm:StrongIneq} has been obtained in the literature, 
  see \cite[Theorem 3.5]{HongSun2018}. Nevertheless, there are key differences between the two approaches. Although the results \cite{HongSun2018} were formulated in the context of multiparametric ergodic means, they can be easily transferred to the context of martingales. Indeed, following their scheme of proof, we can use an atomic decomposition of the $L \log^\alpha L$ spaces due originally to Tao \cite{Tao2001Extrapolation}
  in the commutative setting, to obtain that, for every $s \geq 0$
  \[
    \big\| (\EE_n)_n: L \log^{2 + s} L(\M) \to L \log^s(\M) \big\| < \infty.
  \]
  Then, starting with an element in $L \log^{2} L(\M)$, we can bound $\EE_{n}^{[1]} \otimes \EE_{m}^{[2]}$
  by iterating the technique above, applying it first to $(\Id \otimes \EE_m^{[2]})(f)$ for positive $f$
  to obtain an element
  \[
    (\Id \otimes \EE_m^{[2]})(f) \leq g \quad \mbox{ with } \quad \| g \|_1 \leq \| f \|_{L \log^2 L}
  \]
  since $g$ is in $L_1$, we can apply the maximal weak type $(1,1)$ for the family $(\EE_{n}^{[1]} \otimes \Id)_{n}$.
  As a result, we obtain that for every $f \in L \log^{2} L(\M)_+$ and $\lambda > 0$, there is a projection
  $q(\lambda) \in \Prj(\M)$ satisfying that
  \begin{itemize}[leftmargin=1.5cm]
    \item $\displaystyle{q(\lambda) \, \big(\EE_n^{[1]} \otimes \EE_m^{[2]}\big)(f) \, q(\lambda)
    \, \, \leq \, \, \lambda q(\lambda)}$.
    \item $\displaystyle{ \tau \big( q(\lambda)^\perp \big)  \, \leq \, \frac{\| f \|_{L \log^2 L}}{\lambda}}$
  \end{itemize}
  This results generalizes to iterated products by starting with an element in $L \log^{2(d-1)} L$.
  This result can be understood as a bound
  \[
    (\EE_n^{[1]} \otimes \EE_m^{[2]})_{n,m}: L \log^2 L(\M)_+ \to L_{1,\infty}(\M; \ell_\infty)_+.
  \] 
  Nevertheless, it is important to notice that neither of the results is stronger than the other. In one direction, we cannot deduce the bound above without an extra $\varepsilon$ in the exponent. In the other, our result gives projections with a decay in $\lambda$ that is strictly faster that $O(\lambda^{-1})$. Since the proof of \cite[Theorem 3.5]{HongSun2018} requires going through $L_1[\ell_\infty]$, by Remark \ref{rmk:SteinLlogL} one must loose some factor with respect to the optimal size of the projections $q(\lambda)$. We also recall that having noncommutative maximal weak type $(\Phi, \Phi)$ estimates, with $\Phi(r) = r (1 + \log_+^{2 + \varepsilon}(r))$ may be important for the purpose of extending real interpolation results beteween the weak Orlicz types.
  \end{enumerate}
\end{remark}

\textbf{Acknowledgement.} The authors are thankful to Simeng Wang for pointing out the existence of \cite{HongSun2018} and for providing a reference for the asymmetric convergence results in \cite{Litvinov2012Ergodic}. Those fruitful discussions took place during ICMAT's School I of the <<Thematic Research Program: Operator Algebras, Groups and
Applications to Quantum Information>> in March 2019. The authors are also indebted to Guixiang Hong for the comments provided at Harbin Institute of Technology, China. 

%\cite{Tao2015failure}

\begin{small}
  \bibliographystyle{acm}
  \bibliography{../bibliography/bibliography}
\end{small}

\vspace{50pt}

\hfill \noindent \textbf{Jose M. Conde-Alonso} \\
\null \hfill UAM - Departamento de Matem\'aticas \\ 
\null \hfill 7 Francisco Tom\'as y Valiente, 28049 Madrid, Spain
\\ \null \hfill\texttt{jose.conde@uam.es}

\

\hfill \noindent \textbf{Adri\'an M. Gonz\'alez-P\'erez} \\
\null \hfill K U Leuven - Departement wiskunde \\ 
\null \hfill 200B Celestijnenlaan, 3001 Leuven, Belgium 
\\ \null \hfill\texttt{adrian.gonzalezperez@kuleuven.be}

\

\hfill \noindent \textbf{Javier Parcet} \\
\null \hfill Consejo Superior de Investigaciones Cient\'ificas - ICMAT\\ 
\null \hfill 23 Nicol\'as Cabrera, 28049 Madrid, Spain 
\\ \null \hfill\texttt{parcet@icmat.es}

\end{document}